\newtheorem{thm}[subsection]{Theorem}
\newtheorem{prop}[subsection]{Proposition}
\newtheorem{cor}[subsection]{Corollary}
\newtheorem{lemma}[subsection]{Lemma}
\newtheorem{remark}[subsection]{Remark}
\theoremstyle{definition}
\numberwithin{equation}{section}
\def\bpartial{{\bar\partial}}
\def\cQ{{\bf Q}}
\def\cO{{\cal O}}
\def\cU{{\cal U}}
\def\cA{{\cal A}}
\def\cA{{\mathcal A}}
\def\cO{{\mathcal O}}
\def\cQ{{\mathcal Q}}
\def\cU{{\mathcal U}}
\def\gg{{\mathfrak g}}
\DeclareMathOperator{\Hom}{Hom}
\DeclareMathOperator{\Ric}{Ric}
\DeclareMathOperator{\Spec}{Spec}
\DeclareMathOperator{\Sym}{Sym}
\DeclareMathOperator{\End}{End}
\newfont{\german}{eufm10}
\begin{document}
\pagestyle{plain}

\title{vector bundles induced from jet schemes}

\author{Bailin Song}


\address{School of Mathematical Sciences, University of Science and
Technology of China, Hefei, Anhui 230026, P.R. China}
\email{bailinso@ustc.edu.cn}

\begin{abstract}
A family of holomorphic vector bundles is constructed on a complex manifold $X$. The space of the holomorphic sections of these bundles are calculated in certain cases. As an application, if $X$ is an $N$-dimensional compact K\"ahler manifold with holonomy group $SU(N)$, the space of holomorphic vector fields on its jet scheme $J_m(X)$ is calculated. We also prove that the space of the global sections of the chiral de Rham complex of a K3 surface is the simple $\mathcal N=4$ superconformal vertex algebra with central charge $6$.
  \end{abstract}

\keywords{Jet scheme; vector bundle; holomorphic section; chiral de Rham complex; K3 surface. }

\maketitle
\section{Introduction}
On a Ricci flat compact K\"ahler manifold $X$, the holomorphic
sections of the bundle given by tensors of tangent and cotangent bundles are exactly the parallel sections (page 142 of \cite{YB}). So if the holonomy group of $X$ is $G$, the space of the holomorphic sections of the bundle is isomorphic to the space of the $G$ invariants of the fibre.
In this paper, on a complex manifold $X$, given holomorphic vector bundles  $E$ and $F$,  we construct a family of holomorphic vector bundles $A_m(E,F)$, $m\in \mathbb Z_{\geq 0}$.
If $X$ is an $N$-dimensional compact Hermitian manifold with the holonomy group  $SU(N)$, and $E$ and $F$ are sums of copies of the holomorphic tangent and cotangent bundles, we show that the space of the holomorphic sections of $A_m(E,F)$ is  isomorphic to  the space of the $J_m(SL(N,\mathbb C))$ invariants of the fibre of the bundle.  Here $J_m(SL(N,\mathbb C))$ is the $m$th jet scheme of $SL(N,\mathbb C)$ (see Theorem \ref{thm:holo}).

The idea of the construction of $A_m(E,F)$ is from the jet scheme. The $m$th jet scheme  \cite{EM} $J_m(X)$ of an algebraic scheme $X$ over $\mathbb C$ is determined by its functor of points: for every $\mathbb C$-algebra $A$, we have a bijection
\begin{eqnarray*} \Hom(\Spec(A), J_m(X))\cong\Hom(\Spec(A[t]/(t^{m+1})),X).\end{eqnarray*}
There is a canonical projection $\pi_{m}:J_m(X)\to X$. If $X$ is a projective complex manifold, $E$ is a vector bundle over $X$, then
$J_m(E)$ is a vector bundle over $J_m(X)$. If $X$ is Ricci flat, we expect that sections of the bundle given by  tensors of $J_m(TX)$ and $J_m(T^*X)$   are
 $J_m(SL(N,\mathbb C))$ invariants of the fibre. $A_m(E,F)$ is the vector bundle over $X$, such that the sheaf of sections of $A_m(E,F)$ is the push forward of the sheaf of sections of
 $\Sym^* J_m(E)\otimes \wedge^* J_m(F)$ through $\pi_m$.

 There are two applications of Theorem \ref{thm:holo}. The first application is that we can calculate the holomorphic vector fields on $J_m(X)$,
 \begin{thm}\label{thm:TJ} If $X$ is an $N$-dimensional compact K\"ahler manifold with holonomy group $SU(N)$, then the space of holomorphic vector fields on $J_m(X)$ has dimension $m$.
 \end{thm}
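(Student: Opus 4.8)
The plan is to identify the holomorphic vector fields on $J_m(X)$ with a space of invariants that Theorem \ref{thm:holo} can evaluate. First I would record the canonical isomorphism $T J_m(X)\cong J_m(TX)$ of vector bundles over $J_m(X)$. This is immediate from the functor of points: a tangent vector of $J_m(X)$ at a jet $\gamma$ is a map $\Spec(\mathbb{C}[t,\epsilon]/(t^{m+1},\epsilon^2))\to X$ restricting to $\gamma$ at $\epsilon=0$, which is exactly the datum of an $m$-jet of $TX$ lying over $\gamma$. Consequently the space of holomorphic vector fields is
\begin{equation*}
H^0(J_m(X),T J_m(X))\cong H^0(J_m(X),J_m(TX))\cong H^0\bigl(X,(\pi_m)_*J_m(TX)\bigr),
\end{equation*}
and $(\pi_m)_*J_m(TX)$ is precisely the $\Sym^1$-summand of $A_m(TX,0)$.

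Next I would apply Theorem \ref{thm:holo} with $E=TX$ and $F=0$, and extract the degree-one part. This identifies the holomorphic vector fields with the $J_m(SL(N,\mathbb{C}))$-invariants in the linear part of the fibre, namely the invariants of $\mathcal{O}(\pi_m^{-1}(x))\otimes J_m(V)$, where $V=\mathbb{C}^N$ is the standard representation, $J_m(V)=V\otimes\mathbb{C}[t]/(t^{m+1})$ carries the jet action $g(t)\cdot w(t)=g(t)w(t)\bmod t^{m+1}$, and $\mathcal{O}(\pi_m^{-1}(x))$ denotes the regular functions on the fibre of jets based at $x$, on which the jet group acts through the corresponding action on the base jet coordinates. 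Note that the invariants of $J_m(V)$ alone vanish, so the functions on the based-jet fibre are essential: the invariants are exactly the $J_m(SL(N,\mathbb{C}))$-equivariant polynomial maps from $\pi_m^{-1}(x)$ to $J_m(V)$. The theorem has thus reduced the statement to a purely representation-theoretic count.

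For the count I would first exhibit the invariants by hand. The Lie algebra of $\mathbb{C}$-derivations of $\mathbb{C}[t]/(t^{m+1})$ is $m$-dimensional, spanned by $t\partial_t,t^2\partial_t,\dots,t^m\partial_t$; since $J_m$ is functorial, reparametrization of the source $\Spec(\mathbb{C}[t]/(t^{m+1}))$ acts on $J_m(X)$ for every $X$, producing a canonical $m$-dimensional family of global holomorphic vector fields, and one checks these are linearly independent. This gives the lower bound $\dim\geq m$. For the matching upper bound I would compute the equivariant maps directly, exploiting that the unipotent part of the jet group acts on the fibre by the nilpotent shifts $w^{(i)}\mapsto w^{(i)}+\xi w^{(i-1)}$ (and higher analogues) together with the corresponding shifts on the base coordinates. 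Imposing invariance under these shifts works recursively: the top shift forces the constant-term component of the invariant to vanish, and descending through the lower shifts forces each component to be linear in the jet coordinates and pins it down up to a single scalar, leaving exactly an $m$-parameter family. The case $m=0$ recovers $H^0(X,TX)=0$, consistent with holonomy $SU(N)$.

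The main obstacle is this last invariant computation. The delicate points are, first, pinning down the correct $J_m(SL(N,\mathbb{C}))$-representation on the fibre, in particular the precise, normalization-sensitive action on the higher jet coordinates of the base (which only affects which fields are singled out as invariant, not the final dimension); and second, showing that the nilpotent shifts annihilate all the ``extra'' $SL(N,\mathbb{C})$-equivariant maps present before the jet structure is imposed, most notably those built from the invariant volume form, so that only the $m$ reparametrization classes survive. The remaining steps are routine bookkeeping.
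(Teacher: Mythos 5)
Your reduction is the same as the paper's: identify $TJ_m(X)\cong J_m(TX)$, recognize the pushforward as the degree-one summand $A_m(T,0)[1,0]$, and invoke Theorem \ref{thm:holo} to convert the problem into computing the $\mathfrak g_m$-invariants of the fibre, $\mathfrak g=\mathfrak{sl}(N,\mathbb C)$. Your lower bound is also sound and is in fact a pleasant geometric repackaging of what the paper writes algebraically: the reparametrization vector fields coming from $t\partial_t,\dots,t^m\partial_t$ are exactly the classes $\tilde D^k v$, $0\le k\le m-1$, with $v=\sum_i y_i^{(1)}y_i^{*(0)}$.

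The gap is the upper bound, and it is not a routine step. The fibre of $A_m(T,0)[1,0]$ is the degree-one-in-$y^*$ part of $\mathbb C[y_i^{(j)},y_i^{*(j)}]$, and its $\mathfrak g$-invariants (before imposing the jet structure) already form a large space: by the classical first fundamental theorem for $SL_N$ they include not only the contractions $\sum_i y_i^{(a)}y_i^{*(b)}$ but also all products of such contractions with determinant-type invariants in the $y^{(j)}$, as well as invariants built from determinants involving one $y^*$ and $N-1$ copies of $y$. Showing that after imposing $\mathfrak g_m$-invariance only the $m$-dimensional span of $\tilde D^k v$ survives is precisely the content of Theorem 4.3 of \cite{LSSI}, a first-fundamental-theorem-type result for jet schemes to which the paper outsources this step, and which is itself the main theorem of a separate paper. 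Your proposed recursive argument with nilpotent shifts is only a sketch --- you yourself flag it as ``the main obstacle'' --- and as stated it does not explain how the shifts kill the determinantal invariants, nor why ``each component is forced to be linear in the jet coordinates.'' Without either carrying out that computation in full or citing the invariant-theoretic input, the proof is incomplete at exactly the point where the real work lies.
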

The second application is that we can calculate the global sections of the chiral de Rham complex on any K3 surface.
\begin{thm}\label{thm:gch}If $X$ is a K3 surface, the  space of global sections of the chiral de Rham complex of $X$ is the simple $\mathcal N=4$ superconformal vertex algebra\footnote{In the literature, there are small and large $\mathcal N=4$ superconformal algebras. In this paper, it is the small $\mathcal N=4$ superconformal algebra.} with central charge $6$.
\end{thm}
This generalizes our previous result in \cite{S}, which calculates the global sections of the chiral de Rham complex on a Kummer surface. By chiral Poincar\'e duality in \cite{MS2} and the fact
that the elliptic genus of a K3 surface is  the Jacobi form $2 \phi_{0,1}(z;\tau)$ of weight $0$ and index $1$, all the graded dimensions of the cohomology of the chiral de Rham complex of a K3 surface can be calculated immediately, which has an application (see \cite{S2}) in Mathieu moonshine.
According to \cite{Kap} , on any Calabi-Yau manifold, the cohomology of the chiral de Rham complex
can be identified with the infinite-volume limit of the half-twisted sigma model defined by
E. Witten. This calculation may help to understand the half-twisted sigma model on K3 surfaces.

The paper is organized as follows. In section 2, we review some basic facts of complex geometry. In section 3, we  construct two holomorphic vector bundles $A_m(E,F)$ and $B_m(E,F)$.
In section 4, we compare the holomorphic structures of $A_m(E,F)$ and $B_m(E,F)$, and calculate the mean curvature of $A_m(E,F)$ when the mean curvatures of $E$, $F$ and $TX$ vanish.
In section 5, we give some results of the holomorphic sections of $A_m(E,F)$. Finally, in section 6, we calculate the global sections of the chiral de Rham complex on a K3 surface.
\section{The Chern connections and curvatures}
 For a complex manifold $X$, let $\Omega^{k,l}_X$ be the space of  smooth forms on $X$ of type $(k,l)$. Similarly, for a holomorphic vector bundle $E$ on $X$, let $\Omega_X^{k,l}(E)$ be the space of all smooth forms of type $(k,l)$ with values in $E$.  Let $h_E=(-,-)_E$ be a Hermitian metric on $E$ and
   $\nabla_E=\nabla_E^{1,0}+\bpartial$  be the associated Chern connection with $ \nabla_E^{1,0}: \Omega^{0,0}(E)\to \Omega^{1,0}(E)$.  Then its curvature is
 $$\Theta^E=[\bpartial, \nabla_E]=[\bpartial,\nabla_E^{1,0}]\in \Omega^{1,1}(\Hom(E,E)).$$
Locally, if $e=(e_1,\cdots, e_p)$ is a holomorphic frame   of $E$ over $U\subset X$, the connection one form $\theta^E$ of $\nabla_E$  is given by
$$\nabla_E e_i=\sum \theta^E_{ij}e_j.$$
If we define the matrix $H^E=(H_{ij}^E),$
$$H^E_{ij}=(e_i,e_j)_E,$$
then
$$\theta_{ij}^E=\partial H^E_{il} H_E^{lj}.$$
Here  $(H_E^{lj})$ is the inverse matrix of $H^E$.
The curvature form of $\nabla_E$ is
$$\Theta^E=\bpartial\theta^E\in \Omega^{1,1}(\Hom(E,E)).$$


\subsection{Covariant derivatives of the curvature.}
According to \cite{Ka}, the tensor fields $F_k=F_k^E$, $k\geq 2$ as higher covariant derivatives of the curvature can be defined by induction,
 \begin{eqnarray*}F_2&=&\Theta^E\in \Omega^{1,1}(\Hom(E,E))=\Omega^{0,1}(\Hom(T\otimes E,E)),\\
                 F_{k}&=&\nabla_E^{1,0} F_{k-1}\in \Omega^{0,1}(\Hom( T^{\otimes(k-1)}\otimes E,E)), \quad k\geq 3.
  \end{eqnarray*}
Since $\nabla_E$ is a Chern connection,
$$[\nabla^{1,0}_E,\nabla^{1,0}_E]=0.$$
So we have a lemma from \cite{Ka},
\begin{lemma}\label{lem:Fn}
   $$F_n\in \Omega^{0,1}(\Hom(\Sym^{n-1}T\otimes E,E)).$$
   \end{lemma}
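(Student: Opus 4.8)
The plan is to prove, by induction on $n$, that the tensor $F_n$ is symmetric in its $n-1$ tangent slots; this symmetry is the entire content of the lemma, since the identification $\Omega^{1,1}(\Hom(V,E))\cong\Omega^{0,1}(\Hom(T\otimes V,E))$ used in the definition already places $F_n$ in $\Omega^{0,1}(\Hom(T^{\otimes(n-1)}\otimes E,E))$, and $\Sym^{n-1}T$ is exactly the symmetric part. The base case $n=2$ is trivial because $F_2=\Theta^E$ has a single tangent slot. I adopt the convention that, when passing from $F_n$ to $F_{n+1}=\nabla_E^{1,0}F_n$, the new differentiation direction is recorded as the first tangent slot and the existing $n-1$ slots move to positions $2,\dots,n$. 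The inductive hypothesis then makes $F_{n+1}$ symmetric in positions $2,\dots,n$ for free, so everything reduces to exchanging the first slot with the second.

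To carry out that exchange I would rewrite the second slot as a derivative too. For $n\geq 3$ one has $F_n=\nabla_E^{1,0}F_{n-1}$, so positions $1$ and $2$ of $F_{n+1}$ are two consecutive applications of $\nabla_E^{1,0}$ to $F_{n-1}$. Antisymmetrizing these two directions produces exactly the commutator $[\nabla_E^{1,0},\nabla_E^{1,0}]$ acting on $F_{n-1}$, which vanishes by the identity recorded before the lemma (the $(2,0)$-part of the Chern curvature is zero). Hence slots $1$ and $2$ may be interchanged, and the induction advances.

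The only remaining case is $n=2$, that is the symmetry of $F_3=\nabla_E^{1,0}\Theta^E$, where the second slot is the holomorphic slot of the curvature itself rather than a derivative. This transposition is the second Bianchi identity $\nabla_E^{1,0}\Theta^E=0$, and I would obtain it from the same hypothesis: writing $\Theta^E=[\bpartial,\nabla_E^{1,0}]$ and applying the graded Jacobi identity to $[\nabla_E^{1,0},[\bpartial,\nabla_E^{1,0}]]$, the term $[\bpartial,[\nabla_E^{1,0},\nabla_E^{1,0}]]$ drops out because $[\nabla_E^{1,0},\nabla_E^{1,0}]=0$, forcing $[\nabla_E^{1,0},\Theta^E]=0$. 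With the transposition $(1\,2)$ in hand and all transpositions of $2,\dots,n$ supplied by the inductive hypothesis, I have every adjacent transposition $(1\,2),(2\,3),\dots,(n-1\,n)$, and these generate the full symmetric group $S_n$; therefore $F_{n+1}$ is symmetric in all $n$ slots, completing the induction.

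The step I expect to require the most care is the reduction in the first paragraph together with the commutator computation: one has to track the identification $\Omega^{1,1}\cong\Omega^{0,1}(\Hom(T\otimes-,-))$ precisely enough to be sure that each $\nabla_E^{1,0}$ contributes a genuine new tensor slot rather than an antisymmetrized form degree, and that the antisymmetrization of two successive covariant derivatives equals the curvature commutator on the associated bundle with no residual term from the induced connection on the tangent factors. This last point is where the geometry enters: it is clean precisely because the connection on $T$ contributes no torsion term (for instance in the K\"ahler case), so that the whole antisymmetrization is governed by the $(2,0)$-curvature and the vanishing $[\nabla_E^{1,0},\nabla_E^{1,0}]=0$ does all the work; once this is checked, the combinatorial assembly into $S_n$ is routine.
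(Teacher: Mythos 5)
Your argument is essentially the paper's: the paper gives no proof beyond observing that $[\nabla_E^{1,0},\nabla_E^{1,0}]=0$ for a Chern connection and citing Kapranov, and your induction (adjacent transpositions supplied by the vanishing $(2,0)$-curvature, the base case by the $(2,1)$-part of the Bianchi identity, which you correctly derive from the same commutator via the graded Jacobi identity) is exactly the standard fleshing-out of that remark. The one caveat you flag yourself --- that exchanging two successive covariant derivatives of a tensor leaves a residual $\nabla_{T(X,Y)}$ term coming from the torsion of the connection on the $T$-factors unless that connection is torsion-free (e.g.\ $X$ K\"ahler) --- is genuine, but the paper glosses over it in exactly the same way, and the lemma is only invoked later under hypotheses where this issue disappears.
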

   Locally, if $y=(y_1,\cdots, y_N)$ is a holomorphic coordinate system and $e=(e_1,\cdots, e_p)$ is a holomorphic frame of $E$ on $U\subset X$,
   $F_{k,i_1\cdots,i_n}^l$ is denoted by
   $$F_{k,i_1\cdots,i_n}^le_l=F_{n+1}(\frac{\partial}{\partial y_{i_1}},\cdots,\frac{\partial}{\partial y_{i_n}})e_k\in\Omega^{0,1}(E).$$

If $X$ is an Hermitian manifold with the Hermitian metric $h=(-,-)$, let $M=M^E$ be the mean curvature of $E$, i,e,
$$M=\Theta^E(\frac{\partial}{\partial y_{i}},\frac{\partial}{\partial \bar  y_{j}})H_{ij}\in \Hom(E,E).$$
Here  $H_{ij}=(dy_{i},d y_{i})$.
We define the  tensor field $M_n=M_n^E$  by
$$M_2=M,\quad M_n=\nabla^{1,0}M_{n-1},\quad n\geq 3.$$
Then
$$M_n=\sum F_n(\frac{\partial}{\partial y_{i}},\frac{\partial}{\partial \bar  y_{j}})H_{ij}\in \Hom(\Sym^{n-2}T\otimes E, E).$$
If $M=C_0 Id$ for a constant $C_0$, then $h$ is an Einstein-Hermitian metric.
We have the following lemma obviously,
\begin{lemma}\label{lem:mean}
If $h_E$ is an Einstein-Hermitian metric, then $M^E_{n}=0$ for $n\geq 3$.  If the mean curvature of $h_E$ is zero, then $M^E_n=0$ for $n\geq 2$. In particular, if the Ricci curvature of  $X$ is zero , then  $M_n^T=0$ for $n\geq 2$.
\end{lemma}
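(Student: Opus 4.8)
The plan is to establish the three assertions in turn, each as a short consequence of the recursion $M_n = \nabla^{1,0} M_{n-1}$ together with one structural fact. First I would dispose of the vanishing-mean-curvature case: if the mean curvature is zero then $M_2 = M = 0$, and since $M_n = \nabla^{1,0}M_{n-1}$ for $n \geq 3$, a one-line induction gives $M_n = \nabla^{1,0}(0) = 0$ for all $n \geq 2$. Nothing beyond the definition is needed here.

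Next, for the Einstein--Hermitian case I would use that the identity endomorphism is parallel. Write $M = C_0\,\mathrm{Id}_E$ with $C_0$ constant. The Chern connection on $E$ induces a connection on $\Hom(E,E)$ satisfying the Leibniz rule $\nabla(\phi(s)) = (\nabla\phi)(s) + \phi(\nabla s)$; taking $\phi = \mathrm{Id}_E$ gives $(\nabla\,\mathrm{Id}_E)(s) = \nabla s - \nabla s = 0$, so $\nabla^{1,0}\mathrm{Id}_E = 0$. Hence $M_3 = \nabla^{1,0}(C_0\,\mathrm{Id}_E) = C_0\,\nabla^{1,0}\mathrm{Id}_E = 0$, and the same induction as before yields $M_n = 0$ for every $n \geq 3$.

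Finally, for the tangent bundle I would invoke the standard fact that on a K\"ahler manifold the Chern connection on $TX$ agrees with the Levi-Civita connection, and that the mean curvature $M^T$, obtained by contracting $\Theta^T$ against the metric, is exactly the Ricci tensor. Thus vanishing Ricci curvature is precisely the condition $M^T = 0$, and applying the second assertion to $E = TX$ gives $M^T_n = 0$ for $n \geq 2$. These arguments are formal, so I do not expect a genuine obstacle; the only points meriting attention are the identification of $\mathrm{Id}_E$ as a parallel section in the Einstein case and of the mean curvature of $TX$ with the Ricci curvature in the last part, both of which are standard.
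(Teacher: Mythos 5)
Your argument is correct and matches what the paper intends: the paper states this lemma without proof (``we have the following lemma obviously''), and the intended justification is exactly your unwinding of the recursion $M_n=\nabla^{1,0}M_{n-1}$, the parallelism of $\mathrm{Id}_E$ in the Einstein case, and the identification $M^T=\Ric$ on a K\"ahler manifold (which the paper itself records in its Weitzenb\"ock subsection). Nothing further is needed.
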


If  $X$ is K\"ahler, then for the  holomorphic tangent bundle $T$ of $X$,
$$\Theta^T\in \Omega^{0,1}(\Hom(\Sym^{2}T,T)),$$
we have
\begin{lemma}\label{lem:Rn}
$$F^T_n\in \Omega^{0,1}(\Hom(\Sym^{n}T,T)).$$
\end{lemma}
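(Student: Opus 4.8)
The plan is to prove the lemma by induction on $n$, taking as the base case the identity $F_2^T=\Theta^T\in\Omega^{0,1}(\Hom(\Sym^2 T,T))$ recorded immediately above; this is the only place the K\"ahler hypothesis is used, since the extra symmetry of $\Theta^T$ in its two lower holomorphic indices is exactly the symmetry $\Gamma^i_{jk}=\Gamma^i_{kj}$ of the Chern connection coefficients of $T$ in a holomorphic coordinate frame. For the inductive step I will exhibit two separate partial symmetries of $F_{n+1}^T=\nabla^{1,0}F_n^T$ and then show that together they force full symmetry.

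The first partial symmetry is Lemma \ref{lem:Fn} applied at level $n+1$: it gives $F_{n+1}^T\in\Omega^{0,1}(\Hom(\Sym^{n}T\otimes T,T))$, i.e. invariance under all permutations of the $n$ derivative slots $i_1,\dots,i_n$ (the $E$-slot $k$ held fixed). The second comes from the inductive hypothesis together with the observation that is special to $E=T$: for the tangent bundle the $E$-slot and the derivative slots are all $T^*$-type lower indices, so the Chern connection differentiates each of them through the same coefficients $\Gamma$. Hence $\nabla^{1,0}$ sends a tensor symmetric in those slots to one that is still symmetric in them---this is the standard fact that the covariant derivative of a tensor symmetric in a collection of like-type slots stays symmetric in that collection, the two affected terms $-\Gamma^{m}_{i_n\,\cdot}\,F^{l}_{\cdots m\cdots}$ simply exchanging roles under a transposition by the symmetry of $F_n^T$. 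Writing $F_n^T$ in the local components $F^{l}_{k;i_1\cdots i_{n-1}}$, which are symmetric in $\{k,i_1,\dots,i_{n-1}\}$ by the inductive hypothesis, we conclude that $F_{n+1}^T=\nabla^{1,0}_{i_n}F_n^T$ is symmetric in the set $\{k,i_1,\dots,i_{n-1}\}$ consisting of the $E$-slot and the $n-1$ old derivative slots (with $i_n$ held fixed).

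It remains to combine the two. On the $n+1$ lower indices $\{k,i_1,\dots,i_n\}$ the first symmetry supplies the full symmetric group on $\{i_1,\dots,i_n\}$ and the second the full symmetric group on $\{k,i_1,\dots,i_{n-1}\}$; these overlap in $i_1,\dots,i_{n-1}$, and for $n\geq 2$ the transpositions they contribute already generate $S_{n+1}$ (for instance $(k\,i_n)=(i_1\,i_n)(k\,i_1)(i_1\,i_n)$). Therefore $F_{n+1}^T$ is symmetric in all $n+1$ lower indices, which is precisely $F_{n+1}^T\in\Omega^{0,1}(\Hom(\Sym^{n+1}T,T))$, completing the induction.

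No single step is hard; the point to get right is conceptual rather than computational, namely the recognition that for $E=T$ (and only then) the $E$-slot is an index of the same type as the derivative slots, so that the inductive hypothesis yields a genuinely new symmetry after one more covariant differentiation. Two small technical remarks should be included to make the computation clean: on a K\"ahler manifold the mixed connection coefficients vanish, so $\nabla^{1,0}$ leaves the antiholomorphic form index of $F_n^T\in\Omega^{0,1}(\cdots)$ inert and the whole symmetry argument may be carried out on the holomorphic lower indices alone; and the same symmetry $\Gamma^i_{jk}=\Gamma^i_{kj}$ that supplies the base case also guarantees that the cross terms in the covariant derivative pair up correctly in the preservation-of-symmetry step.
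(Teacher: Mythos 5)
Your proof is correct. The paper states this lemma without proof, presenting it as an immediate consequence of the K\"ahler symmetry $\Theta^T\in \Omega^{0,1}(\Hom(\Sym^2T,T))$ together with Lemma \ref{lem:Fn}, and your induction --- base case from the K\"ahler condition, preservation of the old symmetry under $\nabla^{1,0}$ because the $E$-slot and the derivative slots of $\Hom(\Sym^{n-1}T\otimes T,T)$ carry the same dual Chern connection, and the generation of $S_{n+1}$ by the two overlapping symmetric subgroups --- is exactly the argument the paper leaves implicit.
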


\subsection{The Chern connections of the vector bundle with different holomorphic structures.}
 Assume $E$ has another holomorphic structure given by  $\bpartial'$ and $\tilde\nabla$ is the Chern connection corresponding to  $\bpartial'$. Let $K=\bpartial-\bpartial'\in \Omega^{0,1}(\Hom(E,E))$. Let
    $K^*\in \Omega^{1,0}(\Hom(E,E))$ be the dual of $K$,
    i.e. $K^*$ is determined by
  $$ (K(\frac{\partial}{\partial \bar y_i}) a, b)=(a,K^*(\frac{\partial}{\partial  y_i}) b).$$
We have the following relation between the two connections,
  \begin{lemma}\label{lem:connection} $$\nabla=\tilde \nabla+K-K^*$$
    \end{lemma}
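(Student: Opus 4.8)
The plan is to exploit the uniqueness of the Chern connection: on a Hermitian holomorphic bundle there is exactly one connection whose $(0,1)$-part equals the prescribed $\bpartial$-operator and which is compatible with the metric $h_E$. Accordingly I would set $D=\tilde\nabla+K-K^*$ and verify that $D$ satisfies both characterizing properties of the Chern connection $\nabla$ attached to $\bpartial$; uniqueness then forces $D=\nabla$. First note that $D$ really is a connection, since $K-K^*$ is an $\End(E)$-valued one-form (its $(0,1)$-part is $K$ and its $(1,0)$-part is $-K^*$), and adding such a form to the connection $\tilde\nabla$ again yields a connection.

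For the holomorphic-structure condition I would decompose $D$ by type. Since $\tilde\nabla$ has $(0,1)$-part $\bpartial'$, while $K\in\Omega^{0,1}(\Hom(E,E))$ and $K^*\in\Omega^{1,0}(\Hom(E,E))$, the $(0,1)$-part of $D$ equals
$$\bpartial'+K=\bpartial'+(\bpartial-\bpartial')=\bpartial,$$
which is exactly the $\bpartial$-operator defining $\nabla$.

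The substance of the argument is metric compatibility. Since $\tilde\nabla$ already obeys $d(a,b)=(\tilde\nabla a,b)+(a,\tilde\nabla b)$, the claim reduces to the vanishing of the correction term
$$((K-K^*)a,b)+(a,(K-K^*)b)=0.$$
Here I would write $K=\sum_i K_i\,d\bar y_i$ and $K^*=\sum_i K_i^*\,dy_i$ with $K_i,K_i^*\in\Hom(E,E)$, substitute, and separate the result into its $(1,0)$- and $(0,1)$-components. The $(0,1)$-component is a multiple of $(K_i a,b)-(a,K_i^* b)$, which vanishes by the very definition of $K^*$; the $(1,0)$-component is a multiple of $(a,K_i b)-(K_i^* a,b)$, which vanishes by the conjugate relation $(K_i^* a,b)=(a,K_i b)$ deduced from that definition. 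Both types cancel, so $D$ is metric compatible and thus coincides with $\nabla$.

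The one delicate point, and the step I expect to cause the most trouble, is the bookkeeping in this last computation: the Hermitian form is conjugate-linear in its second slot, so an $E$-valued form placed there changes type (a $d\bar y_i$ reemerges as $dy_i$), and one must track this carefully to see that the cancellations occur separately in bidegree $(1,0)$ and $(0,1)$. Once the two components are correctly isolated, each vanishes immediately from the adjoint relation defining $K^*$, and uniqueness of the Chern connection completes the proof.
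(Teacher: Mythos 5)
Your proposal is correct and follows essentially the same route as the paper: both verify that $\tilde\nabla+K-K^*$ has $(0,1)$-part $\bpartial$ and is compatible with the metric, then invoke uniqueness of the Chern connection. The only difference is that you spell out the cancellation $((K-K^*)a,b)+(a,(K-K^*)b)=0$ by separating types and using the adjoint relation defining $K^*$, a step the paper asserts without detail.
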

\begin{proof}
This is because that  the Chern connections $\nabla$ and $\tilde \nabla$ can be determined by
 $$\nabla=\nabla^{1,0}+\bpartial,  \quad \tilde \nabla=\tilde\nabla^{1,0}+\bpartial',$$
 and for any smooth sections $a,b $ of $E$,
$$d(a,b)=(\nabla a,b)+(a,\nabla b)=(\tilde\nabla a,b)+(a,\tilde\nabla b).$$
Now $$((\tilde \nabla+K-K^*)a,b)+(a,(\tilde \nabla+K-K^*)b)=(\tilde\nabla a,b)+(a,\tilde\nabla b)=d(a,b)$$ and
$$\tilde \nabla+K-K^*=\tilde\nabla^{1,0}+\bpartial'+K-K^*=\tilde\nabla^{1,0}-K^*+\bpartial$$
Since $K^*\in \Omega^{1,0}(\Hom(E,E)$, $\tilde\nabla^{1,0}-K^*$ takes values in $\Omega^{1,0}(E)$. So $\tilde \nabla+K-K^*$ is the Chern connection on $E$ corresponding to the holomorphic structure given by $\bpartial$. Thus by the uniqueness of the Chern connection, $\nabla=\tilde \nabla-K+K^*$.
\end{proof}

Let $E^{\vee}$ be the dual complex vector bundle of $E$, then $E^{\vee}$ has two holomorphic structures  induced from the two holomorphic structures on $E$.  The two holomorphic structures are given by  $\bpartial$ and $\bpartial'$, which are determined by
$$\bpartial (a^{\vee}(b))= (\bpartial a^{\vee})(b)+ a^{\vee}(\bpartial b)\,\text{ and } \bpartial (a^{\vee}(b))= (\bpartial'a^{\vee})(b)+ a^{\vee}(\bpartial' b)$$
for any section $a^{\vee}$ of $E^{\vee}$ and $b$ of $E$.
Let $\bpartial=\bpartial'+K^{\vee}$, then
$$\bpartial a^{\vee}(b)= ((\bpartial+K^{\vee})a^{\vee})(b)+ a^{\vee}((\bpartial+K) b).$$
Thus $K^{\vee}$ is determined by
$$(K^{\vee}a^{\vee})(b)=- a^{\vee}(K b).$$

\subsection{Weitzenb\"ock formulas}
 We list the following Weitzenb\"ock formulas for the compact Hermitian manifold here:
\begin{enumerate}
 \item If $\alpha$ is a smooth section of $E$,
\begin{eqnarray}\label{eqn:weitz1}
\Delta_{\bpartial}\alpha=\nabla^*\nabla \alpha-M\alpha.
\end{eqnarray}
\item If $X$ is K\"ahler, and $\alpha$ is a smooth section of $E=TX$, then the Ricci curvature
$$\Ric \alpha=M \alpha. $$
And
$$\Delta_{\bpartial}\alpha=\nabla^*\nabla\alpha-\Ric\alpha.$$

\end{enumerate}

\section{Vector bundles induced from jet schemes}

Given holomorphic bundles $E$ and $F$ on a complex manifold $X$, a family of holomorphic vector bundles $A_m(E,F)$, $ m\in\mathbb Z_{\geq 0}$ will be constructed in this section.

\subsection{The algebra $R_m$}
For $m\in\mathbb Z_{\geq 0}$, $ y=(y_1,\cdots,y_N)$, $e=(e_1,\cdots, e_p)$ and $f=(f_1,\cdots, f_q)$, let
$R_m=R_m(y,e,f)$ be the algebra $$\mathbb C[y_1^{(1)}, \cdots, y_i^{(j)},\cdots,y_N^{(m)}]\otimes \mathbb C[e_1^{(0)}, \cdots, e_i^{(j)},\cdots,e_p^{(m)}]\otimes \wedge_{\mathbb C}[f_1^{(0)}, \cdots, f_i^{(j)},\cdots,f_q^{(m)}].$$
Here  $y_i^{(j)}, e_i^{(j)},f_i^{(j)}$ are new variables and
$\wedge_{\mathbb C}[f_1^{(0)}, \cdots, f_i^{(j)},\cdots,,f_q^{(m)}]$ is the exterior algebra generated by $f_1^{(0)}, \cdots, f_i^{(j)},\cdots,f_q^{(m)}$ over $\mathbb C$. There is a natrual embeding $R_m(y,e,f)\hookrightarrow R_{m+1}(y,e,f)$ for all $m\geq 0$.
Let $$R_\infty(y,e,f)=\underrightarrow{\lim} R_m(y,e,f).$$

Let $$L=\sum_{i,j}  j \,y_i^{(j)}\frac{\partial }{\partial y_i^{(j)}}+\sum_{i,j}  j \,e_i^{(j)}\frac{\partial }{\partial e_i^{(j)}}+\sum_{i,j}  j \,f_i^{(j)}\frac{\partial }{\partial f_i^{(j)}},$$
$$L_e=\sum_{i,j}   e_i^{(j)}\frac{\partial }{\partial e_i^{(j)}},\quad L_f=\sum_{i,j}   \,f_i^{(j)}\frac{\partial }{\partial f_i^{(j)}}.$$
$L_e, L_f,L$ gives a $\mathbb Z^3_{\geq 0}$ grading of $R_m(y,e,f)$, i.e.
$$R_m(y,e,f)=\bigoplus_{j,k,l}R_m(y,e,f)[j,k,l]$$
with
$$L_e(a)=j a, \quad L_f(a)=k a,\quad L(a)=l a \quad \text{for }a\in R_m(y,e,f)[j,k,l].$$
It is easy to see that $R_m(y,e,f)[j,k,l]$ is a finite dimensional complex vector space.
Let  $$R_m(y,e,f)[j,k]=\bigoplus_{l}R_m(y,e,f)[j,k,l].$$

Let $\tilde D$  be the derivation on $R_\infty[y,e,f]$ given by
$$\tilde Dy_i^{(j)}=y_i^{(j+1)}, \quad \tilde De_i^{(j)}=e_i^{(j+1)}, \quad \tilde Df_i^{(j)}=f_i^{(j+1)}.$$
$\tilde D$ maps $R_m(y,e,f)[j,k,l]$ to $R_{m+1}(y,e,f)[j,k,l+1]$.
\subsection{Construction of the vector bundles $A_m(E,F)$}
Let $\cU$ be the set which consists of $(U,y,e,f)$ such that  $ y=(y_1,\cdots,y_N)$  is a coordinate system , $e=(e_1,\cdots, e_p)$ is a holomorphic frame of $E$ and $f=(f_1,\cdots, f_q)$ is a holomorphic frame of $F$ on $U$.
 Let $\cO(U)$ be the space of
holomorphic functions on $U$. For $m\in \mathbb Z_{\geq 0}\cup\{\infty\}$,
let $$\cA_{m}(U,y,e,f)=\cO(U)\otimes R_m(y,e,f)$$
be the algebra of holomorphic maps from $U$ to $R_m(y,e,f)$. For any $a\in \cA_{m}(U,y,e,f)$ , let $a(x) \in R_m(y,e,f) $ be the image of $a$ at $x\in U$.

Let
\begin{eqnarray}\label{eqn:D}
D=\sum_i\frac{\partial}{\partial y_i}\otimes y_i^{(1)}+1\otimes \tilde D.
 \end{eqnarray}
It is a derivation   on $\cA_{\infty}(U,y,e,f)$.

Now if $(U_\alpha,y_\alpha,e_\alpha,f_\alpha), (U_\beta,y_\beta,e_\beta,f_\beta)\in\cU$ with
$$y_{\alpha,i}=f_{i}(y_\beta),\quad e_{\alpha,i}=\sum g_{ij}e_{\beta,j}, \quad f_{\alpha,i}=\sum h_{ij}f_{\beta,j}\quad \text{on } U_\alpha\cap U_\beta,$$
there is  a unique  $\cO(U_\alpha\cap U_\beta)$ algebra isomorphism
    $$r_{\alpha\beta}: \cA_{\infty}(U_\alpha\cap U_\beta,y_\alpha,e_\alpha,f_\alpha)\to \cA_{\infty}(U_\alpha\cap U_\beta,y_\beta,e_\beta,f_\beta)$$
    with
    \begin{eqnarray}  r_{\alpha\beta}({y}_{\alpha,i}^{(l)})=D^lf(y_{\beta}),\quad r_{\alpha\beta}(\varphi)=\varphi,\quad \text{for  }\varphi\in\cO(U_\alpha\cap U_\beta),\nonumber\\
     r_{\alpha\beta}({e}_{\alpha,i}^{(l)})=D^l(\sum g_{ij}e_{\beta,j}^{(0)}),\quad r_{\alpha\beta}({f}_{\alpha,i}^{(l)})=D^l(\sum h_{ij}f_{\beta,j}^{(0)}).\nonumber
    \end{eqnarray}
It is easy to see
\begin{lemma}
    \label{lem:rD} For $a\in \cA_{\infty}(U_\alpha\cap U_\beta,y_\alpha,e_\alpha,f_\alpha)$, $$r_{\alpha\beta}(D a)=D\,r_{\alpha\beta}(a).$$
    \end{lemma}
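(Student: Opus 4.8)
The final statement to prove is Lemma~\ref{lem:rD}, asserting that the transition isomorphism $r_{\alpha\beta}$ commutes with the total derivation $D$ on $\cA_\infty(U_\alpha\cap U_\beta,\cdots)$.

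\medskip

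The plan is to verify the identity $r_{\alpha\beta}(Da)=D\,r_{\alpha\beta}(a)$ by reducing it to the generators, exploiting that both $D$ and $r_{\alpha\beta}$ behave well with respect to the algebra structure. First I would observe that $r_{\alpha\beta}$ is an $\cO(U_\alpha\cap U_\beta)$-algebra isomorphism and that $D$ is a derivation; hence $D\circ r_{\alpha\beta}-r_{\alpha\beta}\circ D$ is an $\cO(U_\alpha\cap U_\beta)$-linear operator satisfying the Leibniz rule relative to $r_{\alpha\beta}$ (more precisely, if $P=D\,r_{\alpha\beta}-r_{\alpha\beta}D$, then $P(ab)=P(a)\,r_{\alpha\beta}(b)+r_{\alpha\beta}(a)\,P(b)$, so $P$ is an $r_{\alpha\beta}$-twisted derivation). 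A twisted derivation that vanishes on a set of algebra generators vanishes identically. Therefore it suffices to check the identity on the generators $\varphi\in\cO(U_\alpha\cap U_\beta)$ and on the jet variables $y_{\alpha,i}^{(l)}$, $e_{\alpha,i}^{(l)}$, $f_{\alpha,i}^{(l)}$.

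\medskip

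Next I would handle the generators case by case. For $\varphi\in\cO(U_\alpha\cap U_\beta)$, we have $r_{\alpha\beta}(\varphi)=\varphi$ by definition, so $D\,r_{\alpha\beta}(\varphi)=D\varphi=\sum_i\frac{\partial\varphi}{\partial y_i}\,y_i^{(1)}$ from \eqref{eqn:D}; on the other side, $D\varphi$ is itself a well-defined element and $r_{\alpha\beta}(D\varphi)$ must be computed using the chain rule for the change of coordinates $y_\alpha=f(y_\beta)$, which is precisely what the prescription $r_{\alpha\beta}(y_{\alpha,i}^{(l)})=D^l f(y_\beta)$ is designed to match. For the jet variables, the key structural fact is that the generators in the $\alpha$-frame are defined so that $r_{\alpha\beta}(y_{\alpha,i}^{(l)})=D^l(\,y_{\alpha,i}\text{ expressed in }\beta\text{-data}\,)$, and likewise for $e$ and $f$. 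Applying $D$ to $r_{\alpha\beta}(y_{\alpha,i}^{(l)})=D^l f(y_\beta)$ gives $D^{l+1}f(y_\beta)=r_{\alpha\beta}(y_{\alpha,i}^{(l+1)})$, while $r_{\alpha\beta}(Dy_{\alpha,i}^{(l)})=r_{\alpha\beta}(y_{\alpha,i}^{(l+1)})$ since $D$ acts on the $\alpha$-jet variables via $\tilde D$ together with the $\frac{\partial}{\partial y_i}\otimes y_i^{(1)}$ term; the two sides agree because $D^{l+1}=D\circ D^l$. The same telescoping argument, using $r_{\alpha\beta}(e_{\alpha,i}^{(l)})=D^l(g_{ij}e_{\beta,j}^{(0)})$ and $r_{\alpha\beta}(f_{\alpha,i}^{(l)})=D^l(h_{ij}f_{\beta,j}^{(0)})$, settles the $e$ and $f$ generators.

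\medskip

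The main obstacle, and the point deserving careful bookkeeping, is verifying that $D$ indeed sends the $\alpha$-frame generators to the correct next-order generators, i.e.\ that $D\,y_{\alpha,i}^{(l)}=y_{\alpha,i}^{(l+1)}$ and similarly for $e,f$, when $D$ is written in the intrinsic form \eqref{eqn:D} mixing the geometric derivative $\frac{\partial}{\partial y_i}\otimes y_i^{(1)}$ with the formal shift $\tilde D$. The subtlety is that $D$ is defined frame-by-frame, so one must confirm that the definition is frame-independent enough for the telescoping identity $D^{l+1}=D\circ D^l$ to be applied consistently inside the target algebra after transport by $r_{\alpha\beta}$; concretely, the content of the lemma is exactly that the single geometric term $\frac{\partial}{\partial y_i}\otimes y_i^{(1)}$ transforms compatibly under the chain rule encoded in the $r_{\alpha\beta}(y_{\alpha,i}^{(l)})=D^l f(y_\beta)$ prescription. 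Once the generator identities are established, the twisted-derivation reduction from the first paragraph closes the argument with no further computation.
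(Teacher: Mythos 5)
Your argument is correct and is essentially the intended one: the paper offers no written proof (the lemma is prefaced only by ``It is easy to see''), and the natural verification is exactly your reduction to generators via the twisted-Leibniz rule, with the chain rule handling $\varphi\in\cO(U_\alpha\cap U_\beta)$ and the telescoping $D^{l+1}=D\circ D^{l}$ on the $\beta$-side handling the jet variables. The only slight overstatement is in your last paragraph: since $D y_{\alpha,i}^{(l)}=\tilde D y_{\alpha,i}^{(l)}=y_{\alpha,i}^{(l+1)}$ holds by definition (the geometric term $\sum_i\frac{\partial}{\partial y_i}\otimes y_i^{(1)}$ annihilates the formal jet variables) and the iteration of $D$ takes place entirely in the fixed target algebra, there is no frame-independence issue to resolve beyond the generator check itself.
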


For any $x\in U_\alpha\cap U_\beta$, we get an isomorphism of $\mathbb C$-algebras
$$r_{\alpha\beta}^x:R_\infty(y_\alpha,e_\alpha,f_\alpha)\to R_\infty(y_\beta,e_\beta,f_\beta)$$
with
\begin{eqnarray}\label{eqn:trans}
 r_{\alpha\beta}^x(y_{\alpha,i}^{(k)})=D^kf_i(y_\beta)(x),\quad
    r_{\alpha\beta}^x({e}_{\alpha,i}^{(k)})=D^k(\sum g_{ij}e_{\beta,j}^{(0)})(x),\\
     r_{\alpha\beta}^x({f}_{\alpha,i}^{(k)})=D^k(\sum h_{ij}f_{\beta,j}^{(0)})(x).\nonumber
\end{eqnarray}
$r_{\alpha\beta}^x$ satisfies
$$r_{\alpha\beta}^x\circ r_{\beta\alpha}^x=Id,\quad \text{for } x \in U_\alpha\cap U_\beta,$$
$$ r_{\beta\gamma}^x\circ r_{\alpha\beta}^x=r_{\alpha\gamma}^x, \quad \text{for } x\in U_\alpha\cap U_\beta\cap U_\gamma.$$
Thus  the open cover $\{U_\alpha\times R_{\infty}(y_\alpha,e_\alpha,f_\alpha) : (U_\alpha,y_\alpha,e_\alpha,f_\alpha)\in \cU\}$ and the transition functions $r_{\alpha\beta}^x$ define an algebra bundle $A_\infty(E,F)$ on $X$.

By (\ref{eqn:trans}), the transition functions $r_{\alpha\beta}^x$  map $R_m(y_\alpha,e_\alpha,f_\alpha)[j,k,l]$ to $R_m(y_\beta,e_\beta,f_\beta)[j,k,l]$.
Let $A_m(E,F)[j,k,l]$, $A_m(E,F)[j,k]$ and $A_m(E,F)$ be the subbundles of $A_\infty[E,F]$ with fibres $R_m[j,k,l]$,  $R_m[j,k]$ and $R_m$, respectively.  We have  $$A_m(E,F)=\bigoplus_{j,k,l} A_m(E,F)[j,k,l],$$
 $$A_m(E,F)[j,k]=\bigoplus_{l} A_m(E,F)[j,k,l].$$
 $A_m(E,F)[j,k,l]$ are finite dimensional
holomorphic vector bundles.
Let $O_m$ be the algebra bundle $A_m(E,F)[0,0]=A_m(0,0)$ and $O_m[l]=A_m(E,F)[0,0,l]$.

Let $\cA_m(E,F)$, $\cA_m(E,F)[j,k]$ and $\cO_m$ be the sheaf of holomorphic sections of $A_m(E,F)$, $A_m(E,F)[j,k]$ and $O_m$, respectively. $\cA_m(E,F)$ are algebras of $\cO_m$ and $\cA_m(E,F)[j,k]$ are modules of $\cO_m$.

\begin{remark} If $X$ is a complex projective manifold, then $\cA_m(E,F)[j,k]$ can be induced from the $m$th jet scheme of $X$.  For each integer $m\geq 0$,
the sheaf $\cO_m$ is the push forward of the structure sheaf of $J_m(X)$ through $\pi_m$.
$J_m(E)$ and $J_m(F)$ are vector bundles over $J_m(X)$. $\cA_m(E,F)[j,k]$ is the push forward of the sheaf of sections of $\Sym^jJ_m(E)\otimes\wedge^k J_m(F)$ through $\pi_m$.
\end{remark}

By Lemma \ref{lem:rD}, $r_{\alpha\beta}$ commutes with the derivation $D$. So $D$ is a derivation of the sheaf of algebras $\cA_\infty(E,F)$.  According to  (\ref{eqn:D}),
$D$ maps $\cA_m(E,F)[j,k,l]$ to $\cA_{m+1} (E,F)[j,k,l+1]$. Thus $D$  is a derivation of the $\cO_\infty$ module
$\cA_\infty(E,F)[j,k]$.

$D$ can be extended to a derivation of $\Omega_U^{0,*}(A_\infty(E,F))$ by assuming $D\bar dy_i=0$ and $Df=\sum_i\frac {\partial f}{\partial y_i}y_i^{(1)}$, for any smooth function $f$ on $U$.

\subsection{Vector bundles $B_m(E,F)$.}
Let $B_m(E,F)$ be the algebra bundle constructed from the open cover $\{U_\alpha\times R_{\infty}(y_\alpha,e_\alpha,f_\alpha) : (U_\alpha,y_\alpha,e_\alpha,f_\alpha)\in \cU\}$ and the transition functions
$$\tilde r_{\alpha\beta}^x:R_\infty(y_\alpha,e_\alpha,f_\alpha)\to R_\infty(y_\beta,e_\beta,f_\beta).$$
For $x\in U_\alpha\cap U_\beta$, $\tilde r_{\alpha\beta}^x$ is the isomorphism of algebras given by
\begin{eqnarray}
 \tilde r_{\alpha\beta}^x(y_{\alpha,i}^{(k)})=\sum \frac{\partial f_i(y_\beta)}{\partial y_{\beta,j}} (x)y_{\beta,j}^{(k)},
    r_{\alpha\beta}({e}_{\alpha,i}^{(k)})=\sum g_{ij}(x)e_{\beta,j}^{(k)}(x),\, r_{\alpha\beta}({f}_{\alpha,i}^{(k)})=\sum h_{ij}(x)f_{\beta,j}^{(k)}.\nonumber
\end{eqnarray}
$\tilde r_{\alpha\beta}^x$ maps $R_m(y_\alpha,e_\alpha,f_\alpha)[j,k,l]$to $ R_m(y_\beta,e_\beta,f_\beta)[j,k,l]$.
Let $B_m(E,F)[j,k,l]$, $B_m(E,F)[j,k]$ and $B_m(E,F)$ be the subbundles of $B_\infty[E,F]$ with fibres $R_m[j,k,l]$,  $R_m[j,k]$ and $R_m$ respectively.
 Then  $B_m(E,F)[j,k,l]$ are finite dimensional
holomorphic vector bundles. We have
\begin{eqnarray}\label{eqn:Bm}
B_m(E,F)&=&\bigoplus_{j,k,l} B_m(E,F)[j,k,l]\\
&=&\Sym^*(\oplus_{1\leq l\leq m} T^*)\otimes \Sym^*(\oplus_{0\leq l\leq m} E)\otimes\wedge^*(\oplus_{0\leq l\leq m} F),\nonumber
\end{eqnarray}
\begin{eqnarray}\label{eqn:Bmjk}
 B_m(E,F)[j,k]&=&\bigoplus_{l} B_m(E,F)[j,k,l]\\
 &=&\Sym^*(\oplus_{1\leq l\leq m} T^*)\otimes \Sym^j(\oplus_{0\leq l\leq m} E)\otimes\wedge^k(\oplus_{0\leq l\leq m} F).\nonumber
 \end{eqnarray}

It is easy to see that $\tilde D$ commutes with $r^x_{\alpha\beta}$. So $\tilde D$ is a derivation of the algebra bundle $B_m(E,F)$.

 Let $O_{m,1}=\oplus_{k>0}O_m[k]$ and $O_{m,k}=(O_{m,1})^k$, the sub bundle of $O_m$ generated by product of $k$ elements of $O_m$ with positive weights. Thus
$$\cdots O_{m,k}\subset\cdots\subset O_{m,2}\subset O_{m,1}\subset O_{m,0}=\cO_m.$$
$$\bigoplus_{k\geq 0}O_{m,k}\slash O_{m,k+1}\cong \bigoplus_{k\geq 0}\Sym^k(\oplus_{1\leq l\leq m} T^*_l).$$
In general we have
$$\bigoplus_{k\geq 0}A(E,F)[j,k]O_{m,l}\slash A(E,F)[j,k]O_{m,l+1}=B_m(E,F)[j,k].$$

\section{Holomorphic structures}
Let $h_{T^*}=(-,-)_{T^*}$ be an Hermitian metric of the cotangent bundle $T^*$ of $X$, and $h_E=(-,-)_E$ and $h_F=(-,-)_F$ be Hermitian metrics of $E$ and $F$ respectively.
Let $\nabla_{T^*}$, $\nabla_E$ and $\nabla_F$ be the Chern connections of $T^*$, $E$ and $F$ respectively.
For $(U,y,e,f)\in \cU$,
$$\nabla_T dy_i=\sum \theta^{T^*}_{ij} dy_j,\nabla_E e_i=\sum \theta^E_{ij}e_j, \nabla_F f_i=\sum \theta^F_{ij}f_j.$$
 Let $\Theta^{T^*}$, $\Theta^E$ and $\Theta^F$ be their curvatures.

\subsection{Chern connection on $B_m(E,F)$}  By  (\ref{eqn:Bm}), there is a canonical Hermitian metric $h=(-,-)$ on $B_m(E,F)$ induced from $h_{T^*}$, $h_E$ and $h_F$.
Let $\tilde \nabla=\tilde \nabla^{1,0}+\bpartial'$ be the Chern connection associated with $h$. For $(U,y,e,f)\in \cU$, $\tilde \nabla$ satisfies
\begin{eqnarray}\label{eqn:conB}
\tilde \nabla  y_i^{(l)}=\sum \theta^{T^*}_{ij}y_j^{(l)},\quad \tilde \nabla e_i^{(l)}=\sum \theta^E_{ij}e_j^{(l)},\quad \tilde \nabla f_i^{(l)}=\sum \theta^F_{ij}f_j^{(l)},\\
\tilde\nabla (ab)=(\tilde\nabla a)b+a(\tilde\nabla b),\quad \text{for } a, b\in\Omega_U^{0,0}(B_m(E,F)).\nonumber
\end{eqnarray}
So its curvature $\tilde \Theta$ satisfies
\begin{eqnarray}\label{eqn:curB}
\tilde\Theta  y_i^{(l)}=\sum \Theta^{T^*}_{ij}y_j^{(l)},\quad \tilde\Theta e_i^{(l)}=\sum \Theta^E_{ij}e_j^{(l)},\quad \tilde\Theta f_i^{(l)}=\sum \Theta^F_{ij}f_j^{(l)},\\
\tilde \Theta (ab)=(\tilde\Theta a)b+a(\tilde\Theta b),\quad \text{for } a, b\in\Omega_U^{0,0}(B_m(E,F)).\nonumber
\end{eqnarray}
\subsection{A canonical isomorphism.}
$A_m(E,F)$ and $B_m(E,F)$ are different holomorphic vector bundles. But as complex vector bundles, they are isomorphic. Here we construct a canonical isomorphism from $A_m(E,F)$ to $B_m(E,F)$.

For $(U,y,e,f)\in \cU$,
 let
  $$H_{ij}=(dy_i,dy_j)_{T^*},\quad H_{ij}^E=(e_i,e_j)_E,\quad H_{ij}^F=(f_i,f_j)_F ,$$
  and let $(H^{ij})$,  $(H_E^{ij})$ and  $(H_F^{ij})$ be the inverse matrices of $(H_{ij})$, $H_{ij}^E)$ and $(H_{ij}^F)$ respectively.
 For $k\geq 0$,
 let
\begin{eqnarray}\label{eqn:YEF}Y_l^{(k+1)}=\sum H_{lj}D^{k}(H^{ji}y_i^{(1)}),\quad
E_l^{(k)}=\sum H_{lj}^ED^{k}(H^{ji}_EE_i^{(0)}),\quad
F_l^{(k)}=\sum H_{lj}^FD^{k}(H^{ji}_FF_i^{(0)}).
\end{eqnarray}

\begin{prop}\label{prop:isoAB}
There is an isomorphism  from  $\Phi_m : A_m(E,F)\to B_m(E,F)$ of smooth complex vector bundles,  which locally maps ${Y_l}^{(k)}$, ${E_l}^{(k)}$ and  ${F_l}^{(k)}$ to
  ${y_l}^{(k)}$, ${e_l}^{(k)}$ and  ${f_l}^{(k)}$ respectively. This isomorphism maps $A_m(E,F)[j,k,l]$ to $B_m(E,F)[j,k,l]$ and preserves the algebra structures.
\end{prop}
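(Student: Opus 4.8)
The plan is to construct $\Phi_m$ one chart at a time and then glue. On a chart $(U_\alpha,y_\alpha,e_\alpha,f_\alpha)\in\cU$, let $\Phi_m^\alpha$ be the map that is the identity on $\cO(U_\alpha)$ and sends the smooth sections $Y_{\alpha,l}^{(k)},E_{\alpha,l}^{(k)},F_{\alpha,l}^{(k)}$ of $A_m$ (written in the $A_m$-trivialization) to the generators $y_{\alpha,l}^{(k)},e_{\alpha,l}^{(k)},f_{\alpha,l}^{(k)}$ of $B_m$. Granting that this is fibrewise an algebra isomorphism, the only thing needed for a global bundle map is the cocycle compatibility on overlaps,
\begin{equation*}
\Phi_m^\beta\circ r_{\alpha\beta}^x=\tilde r_{\alpha\beta}^x\circ\Phi_m^\alpha\qquad\text{on } U_\alpha\cap U_\beta .
\end{equation*}
Since all four maps are algebra homomorphisms and the $Y,E,F$ generate, it is enough to test this on $Y_{\alpha,l}^{(k)}$ and on the $E,F$ analogues. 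As $\Phi_m^\alpha(Y_{\alpha,l}^{(k)})=y_{\alpha,l}^{(k)}$ while $\tilde r_{\alpha\beta}^x$ acts on $y_{\alpha,l}^{(k)}$ by the Jacobian (see (\ref{eqn:transB})), the entire statement reduces to showing that the modified generators transform \emph{tensorially} under the jet transition functions $r_{\alpha\beta}^x$ of $A_m$:
\begin{equation*}
r_{\alpha\beta}^x\bigl(Y_{\alpha,l}^{(k)}\bigr)=\frac{\partial f_l}{\partial y_{\beta,m}}(x)\,Y_{\beta,m}^{(k)},\qquad r_{\alpha\beta}^x\bigl(E_{\alpha,l}^{(k)}\bigr)=g_{lm}(x)\,E_{\beta,m}^{(k)},\qquad r_{\alpha\beta}^x\bigl(F_{\alpha,l}^{(k)}\bigr)=h_{lm}(x)\,F_{\beta,m}^{(k)}.
\end{equation*}
This is the content of the proposition.

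Next I would dispose of the algebraic bookkeeping. From (\ref{eqn:YEF}) and $Dy_i^{(j)}=y_i^{(j+1)}$ one reads off $Y_l^{(k+1)}=y_l^{(k+1)}+(\text{products of the }y^{(j)}\text{ with }j\le k)$, and likewise for $E,F$; thus the change of generators from $y,e,f$ to $Y,E,F$ is unitriangular with respect to the top jet index, so $\{Y_l^{(k)},E_l^{(k)},F_l^{(k)}\}$ is again a free generating set of $R_m$. Moreover $D$ raises the $L$-weight by one and preserves $L_e,L_f$, so each $Y_l^{(k)}$ carries the same trigrading as $y_l^{(k)}$, and similarly for $E,F$. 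Hence $\Phi_m^\alpha$ is a well-defined graded algebra isomorphism of $R_m$; it preserves the algebra structure by construction and carries $A_m(E,F)[j,k,l]$ to $B_m(E,F)[j,k,l]$.

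The real work is the tensoriality identity. First I would upgrade Lemma \ref{lem:rD} to smooth sections: with $D\varphi=\sum_p\frac{\partial\varphi}{\partial y_p}y_p^{(1)}$ on a smooth $\varphi$, holomorphy of the coordinate change $y_\alpha=f(y_\beta)$ makes the $(1,0)$-chain rule exact, so $r_{\alpha\beta}$, taken $C^\infty$-linear on coefficients, still commutes with $D$. Using this together with $r_{\alpha\beta}(y_{\alpha,i}^{(1)})=\frac{\partial f_i}{\partial y_{\beta,r}}y_{\beta,r}^{(1)}$ from (\ref{eqn:trans}) and the tensor law $H_\alpha^{-1}=(J^\dagger)^{-1}H_\beta^{-1}J^{-1}$ for the Hermitian metric (with $J_{ir}=\partial f_i/\partial y_{\beta,r}$), I compute
\begin{equation*}
r_{\alpha\beta}\bigl(Y_{\alpha,l}^{(k+1)}\bigr)=H_{\alpha,lj}\,D^{k}\bigl(H_\alpha^{ji}\,r_{\alpha\beta}(y_{\alpha,i}^{(1)})\bigr)=H_{\alpha,lj}\,D^{k}\bigl(\overline{(J^{-1})_{pj}}\,H_\beta^{pr}\,y_{\beta,r}^{(1)}\bigr).
\end{equation*}
The decisive point is that $\overline{(J^{-1})_{pj}}$ is antiholomorphic, hence annihilated by $D$, so it passes through $D^{k}$; the remaining metric contraction $H_{\alpha,lj}\,\overline{(J^{-1})_{pj}}=\frac{\partial f_l}{\partial y_{\beta,s}}H_{\beta,sp}$ (again by the tensor law, using $\sum_j J_{jt}(J^{-1})_{pj}=\delta_{pt}$) then recombines to give exactly $\frac{\partial f_l}{\partial y_{\beta,s}}Y_{\beta,s}^{(k+1)}$. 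The computations for $E$ and $F$ are word for word the same with $J$ replaced by the frame transitions $g$ and $h$.

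I expect the genuine obstacle to be precisely this cancellation: the higher derivatives $D^k$ in the jet transition $r_{\alpha\beta}$ produce many inhomogeneous (non-tensorial) terms, and one must verify that they are killed against the derivatives of the Hermitian metric — which is exactly the metric-compatibility of the Chern connection encoded in $\theta=\partial H\,H^{-1}$. The conceptually clean way to organize the cancellation is the observation above that $D$ annihilates the antiholomorphic Jacobian factors, after which nothing is left to cancel. Once (\ref{eqn:YEF}) is known to transform tensorially, the local maps $\Phi_m^\alpha$ satisfy the cocycle identity and glue to the asserted global smooth complex-vector-bundle isomorphism $\Phi_m:A_m(E,F)\to B_m(E,F)$, preserving the trigrading and the algebra structure.
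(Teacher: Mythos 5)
Your proof is correct and follows essentially the same route as the paper: both reduce the statement to the tensorial transformation law of the modified generators $Y_l^{(k)},E_l^{(k)},F_l^{(k)}$ under the jet transition functions, and both establish it by the same cancellation — the antiholomorphic Jacobian factors in the transformed metric pass through $D^{k}$ because $D$ only differentiates in the holomorphic directions, after which the metric contractions recombine. Your explicit remarks on extending $r_{\alpha\beta}$ to smooth coefficients and on the unitriangularity of the change of generators are points the paper uses implicitly, but they do not change the argument.
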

\begin{proof}For any point $x\in X$, let $(U_\alpha,y_\alpha,e_\alpha,f_\alpha)\in\cU$ with $x\in U_\alpha$. By the definition of $Y_l^{(k+1)}$, $E_l^{(k)}$ and $F_l^{(k)}$ in (\ref{eqn:YEF}),
\begin{eqnarray}Y_{\alpha,l}^{(k+1)}\equiv y_{\alpha,l}^{(k+1)} \mod O_{m,2};\nonumber\\
E_{\alpha,l}^{(k)}\equiv e_{\alpha,l}^{(k)},\quad
F_{\alpha,l}^{(k)}\equiv f_{\alpha,l}^{(k)}\quad \mod A_m(E,F)O_{m,1}. \nonumber
\end{eqnarray}
So $R_m(Y_\alpha, E_\alpha, F_\alpha)$ are the fibres of $A_m(E,F)$ at $x$.

Let  $\Phi_{m,x} : A_m(E,F)|_x\to B_m(E,F)|_x$ be the isomorphism of $\mathbb C$-algebras given locally by
$$\Phi_{m,x}(Y_{\alpha,l}^{(k+1)})=y_{\alpha,l}^{(k+1)},\quad \Phi_{m,x}(E_{\alpha,l}^{(k)})=e_{\alpha,l}^{(k)},\quad \Phi_{m,x}(F_{\alpha,l}^{(k)})=f_{\alpha,l}^{(k)}.$$
$Y_{\alpha,l}^{(k+1)}$,$E_{\alpha,l}^{(k)}$ and  $F_{\alpha,l}^{(k)}$ have the same grades as $y_{\alpha,l}^{(k+1)}$,$e_{\alpha,l}^{(k)}$ and  $f_{\alpha,l}^{(k)}$ respectively. So
$\Phi_{m,x}$ preserves the $\mathbb Z^3_{\geq 0}$ grading of each fibre.

 If $(U_\beta,y_\beta,e_\beta,f_\beta)\in \cU$ with $x\in U_\beta$,
$y_{\alpha,i}=\sum f_{i}(y_\beta)$, $e_{\alpha,i}=\sum g_{ij}e_{\beta,j}$, $ f_{\alpha,i}=\sum h_{ij}f_{\beta,j}$ on $U_\alpha\cap U_\beta.$
$$H_{\beta, ij}=(dy_{\beta,i},dy_{\beta,j})=\sum \frac{\partial y_{\beta,i}}{\partial y_{\alpha,k}}H_{\alpha,kl}\overline{\frac{\partial y_{\beta,j}}{\partial y_{\alpha,l}}}.$$
So
\begin{eqnarray}\label{eqn:Y}\gamma_{\alpha\beta}(Y_{\alpha,l}^{(k+1)})&=&\sum H_{\alpha,lj}D^{k}(H_\alpha^{ji}\gamma_{\alpha\beta}(y_{\alpha,i}^{(1)}))\\
&=&\sum (\frac{\partial y_{\alpha,l}}{\partial y_{\beta,s}}H_{\beta,st}\overline{\frac{\partial y_{\alpha,j}}{\partial y_{\beta,t}}})D^k
(\overline{\frac{\partial y_{\beta,u}}{\partial y_{\alpha,j}}}H_{\beta}^{uv}\frac{\partial y_{\beta,v}}{\partial y_{\alpha,i}}\gamma_{\alpha\beta}(y_{\alpha,i}^{(1)}))\nonumber \\
&=&\sum (\frac{\partial y_{\alpha,l}}{\partial y_{\beta,s}}H_{\beta,st}\overline{\frac{\partial y_{\alpha,j}}{\partial y_{\beta,t}}}\, \overline{\frac{\partial y_{\beta,u}}{\partial y_{\alpha,j}}})D^k
(H_{\beta}^{uv}\frac{\partial y_{\beta,v}}{\partial y_{\alpha,i}}\gamma_{\alpha\beta}(y_{\alpha,i}^{(1)}))\nonumber\\
&=&\sum \frac{\partial y_{\alpha,l}}{\partial y_{\beta,s}}H_{\beta,su}D^k(H_{\beta}^{uv}y_{\beta,v}^{(1)})\nonumber\\
&=&\sum \frac{\partial y_{\alpha,l}}{\partial y_{\beta,s}}Y_{\beta,s}^{(k+1)}.\nonumber
\end{eqnarray}
Similarly,
\begin{eqnarray}
\gamma_{\alpha\beta}(E_{\alpha,l}^{(k)})=\sum g_{ls}E_{\beta,s}^{(k)},\quad \gamma_{\alpha\beta}(F_{\alpha,l}^{(k)})=\sum h_{ls}F_{\beta,s}^{(k)}.
\end{eqnarray}
Thus $\Phi_{m,x}\circ \gamma_{\alpha\beta}^x=\tilde\gamma_{\alpha\beta}^x\circ \Phi_{m,x}$. $\Phi_{m,x}$ is independent of the choice of $(U_\alpha,y_\alpha,e_\alpha,f_\alpha)\in\cU$.

$\Phi_{m,x}$ smoothly depends on $x$, so it gives an isomorphism $\Phi_m : A_m(E,F)\to B_m(E,F)$ of smooth complex vector bundles. $\Phi_{m,x}$ preserves the grading, so
$\Phi_m$ maps  $ A_m(E,F)[j,k,l]$ to $B_m(E,F)[j,k,l]$. $\Phi_{m,x}$ is an isomorphism of $\mathbb C$-algebra, so $\Phi_m$ preserves the algebra structure.
\end{proof}

\subsection{Holomorphic structures on $A_m(E,F)$}
Through the isomorphism $\Phi_m$,  $B_m(E,F)$ can be regarded as the same smooth complex vector bundle as  $A_m(E,F)$ with a different holomorphic structure.
Now the underlying complex vector bundle of $A_m(E,F)$ has a Hermitian metric $(-,-)_m$ ( from $B_m(E,F)$) and two holomorphic structures. One is from $A_m(E,F)$, which is determined by $\bpartial$ with
\begin{eqnarray}\label{eqn:bp}
\bpartial y_{i}^{(l)}=0,\quad \bpartial e_{i}^{(l)}=0,\quad \bpartial f_{i}^{(l)}=0, \\
  \bpartial ab=\bpartial a b+a\bpartial b,\quad a,b\in \Omega^{0,0}(A_m(E,F)).\nonumber
  \end{eqnarray}
The other  is induced from $B_m(E,F)$, which is determined by $\bpartial'$ with
\begin{eqnarray}\label{eqn:bpp}
\bpartial' Y_{i}^{(l)}=0,\quad \bpartial' E_{i}^{(l)}=0,\quad \bpartial' F_{i}^{(l)}=0,\\
\bpartial' ab=\bpartial' a b+a\bpartial' b,\quad a,b\in \Omega^{0,0}(A_m(E,F)).\nonumber
\end{eqnarray}

Under the fixed Hermitian metric $(-,-)_m$ on $A_m(E,F)$, let $\nabla=\nabla^{1,0}+\bpartial$ and  $\tilde \nabla=\tilde \nabla^{1,0}+ \bpartial '$ be the Chern connections associated with $\bpartial$ and $\bpartial'$ respectively. By (\ref{eqn:conB}) and (\ref{eqn:curB}) and Proposition \ref{prop:isoAB}, for $(U,y,e,f)\in \cU$, $\tilde \nabla$ satisfies
\begin{eqnarray}\label{eqn:conB1}
\tilde \nabla  Y_i^{(l)}=\sum \theta^{T^*}_{ij}Y_j^{(l)},\quad \tilde \nabla E_i^{(l)}=\sum \theta^E_{ij}E_j^{(l)},\quad \tilde \nabla F_i^{(l)}=\sum \theta^F_{ij}F_j^{(l)},\\
\tilde\nabla (ab)=(\tilde\nabla a)b+a(\tilde\nabla b),\quad \text{for } a, b\in\Omega_U^{0,0}(A_m(E,F)).\nonumber
\end{eqnarray}
Its curvature $\tilde \Theta$ satisfies
\begin{eqnarray}\label{eqn:curB1}
\tilde\Theta  Y_i^{(l)}=\sum \Theta^{T^*}_{ij}Y_j^{(l)},\quad \tilde\Theta E_i^{(l)}=\sum \Theta^E_{ij}E_j^{(l)},\quad \tilde\Theta F_i^{(l)}=\sum \Theta^F_{ij}F_j^{(l)},\\
\tilde \Theta (ab)=(\tilde\Theta a)b+a(\tilde\Theta b),\quad \text{for } a, b\in\Omega_U^{0,0}(A_m(E,F)).\nonumber
\end{eqnarray}

There is also a derivation $\tilde D: A_{\infty}(E,F)\to  A_{\infty}(E,F)$ induced from $\tilde D$ on $B_m(E,F)$ through $\Phi_\infty$. By Proposition \ref{prop:isoAB}, for $(U,y,e,f)\in \cU$,
$\tilde D$ satisfies
$$\tilde DY_i^{(j)}=Y_i^{(j+1)}, \quad \tilde DE_i^{(j)}=E_i^{(j+1)}, \quad \tilde DF_i^{(j)}=F_i^{(j+1)}.$$
\begin{lemma}\label{lem:DD}
$$D=\tilde D + \sum Y_t^{(1)}\tilde\nabla_{\frac{\partial}{\partial y_t}}.$$
\end{lemma}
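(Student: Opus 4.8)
The plan is to verify the identity locally on a chart $(U,y,e,f)\in\cU$, by showing that both sides are derivations of the algebra of smooth sections of $A_\infty(E,F)$ and that they agree on a generating set. First I would record that the right-hand side is a derivation: by (\ref{eqn:conB1}) the Chern connection $\tilde\nabla$ obeys the Leibniz rule with respect to the algebra product, so each $\tilde\nabla_{\partial/\partial y_t}$ is an even derivation; since $Y_t^{(1)}$ lies in the even part $B_m(E,F)[0,0,1]$ it is central, so $\sum_t Y_t^{(1)}\tilde\nabla_{\partial/\partial y_t}$ is again a derivation. Together with $\tilde D$ and $D$, all operators in the statement are derivations, so it suffices to compare them on the algebra generators, namely the smooth functions on $U$ together with the variables $Y_i^{(k)}$, $E_i^{(k)}$, $F_i^{(k)}$.

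On a smooth function $\varphi$ I would use the extension of $D$ to smooth functions, $D\varphi=\sum_t(\partial\varphi/\partial y_t)\,y_t^{(1)}$; since $\tilde D$ is induced through the $C^\infty$-linear isomorphism $\Phi_\infty$ it kills $\varphi$, while $\tilde\nabla_{\partial/\partial y_t}\varphi=\partial\varphi/\partial y_t$. Using $Y_t^{(1)}=y_t^{(1)}$ (the $k=0$ case of (\ref{eqn:YEF}), where $H_{lj}H^{ji}=\delta_l^i$), the two sides agree on $\varphi$.

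The heart of the argument is the check on the generators $Y_l^{(k)}$, the cases of $E_l^{(k)}$ and $F_l^{(k)}$ being identical (with $Y_t^{(1)}$ even, so no sign issues arise for the odd $F$). Writing $Y_l^{(k)}=H_{lj}D^{k-1}(H^{ji}y_i^{(1)})$ from (\ref{eqn:YEF}) and applying $D$ by the Leibniz rule gives two terms. The term in which $D$ falls on the inner factor reproduces $H_{lj}D^{k}(H^{ji}y_i^{(1)})=Y_l^{(k+1)}=\tilde D Y_l^{(k)}$. In the remaining term $D$ hits the metric coefficient $H_{lj}$, producing $\sum_t(\partial H_{lj}/\partial y_t)\,y_t^{(1)}\cdot D^{k-1}(H^{ji}y_i^{(1)})$. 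Inverting the definition of $Y_s^{(k)}$ to write $D^{k-1}(H^{ji}y_i^{(1)})=H^{js}Y_s^{(k)}$, and recognizing $\sum_j(\partial H_{lj}/\partial y_t)H^{js}$ as the coefficient of $dy_t$ in the Chern connection one-form $\theta^{T^*}_{ls}=\partial H_{lj}\,H^{js}$ from Section 2, this remaining term becomes exactly $\sum_t Y_t^{(1)}\tilde\nabla_{\partial/\partial y_t}Y_l^{(k)}$ by (\ref{eqn:conB1}). Hence $DY_l^{(k)}=\tilde D Y_l^{(k)}+\sum_t Y_t^{(1)}\tilde\nabla_{\partial/\partial y_t}Y_l^{(k)}$, as desired.

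I expect the main obstacle to be bookkeeping rather than conceptual: one must carefully distinguish the $\tilde D$ of the statement (induced from $B_m(E,F)$ through $\Phi_\infty$, with $\tilde D Y_i^{(k)}=Y_i^{(k+1)}$) from the operator of the same name appearing in the definition (\ref{eqn:D}) of $D$, and one must track that $D$ differentiates the non-holomorphic metric coefficients $H_{lj}$ through its extension to smooth functions. The only genuine verification is that this metric-derivative term recombines, after inverting (\ref{eqn:YEF}), into precisely the Chern connection form $\theta^{T^*}=\partial H\cdot H^{-1}$; once this is seen the identity drops out, and the $E$- and $F$-generators are handled verbatim with $H^E,\theta^E$ and $H^F,\theta^F$ in place of $H,\theta^{T^*}$.
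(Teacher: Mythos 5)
Your proposal is correct and follows essentially the same route as the paper: both verify the identity on the generators $Y_l^{(k)}$, $E_l^{(k)}$, $F_l^{(k)}$ and smooth functions via the Leibniz rule applied to the definition (\ref{eqn:YEF}), identifying the derivative of the metric coefficient $H_{lj}$ with the Chern connection form $\theta^{T^*}$, and then conclude by noting both sides are derivations agreeing on a generating set. No substantive differences.
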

\begin{proof}
$D$, $\tilde D$ and $\sum_t Y_t^{(1)}\tilde\nabla_{\frac{\partial}{\partial y_t}}$ are globally defined operators.  To prove the equation, we only need to prove it locally.
For $(U,y,e,f)\in \cU$,
\begin{eqnarray*}DY_l^{(k+1)}&=&\sum D(H_{lj}D^{k}(H^{ji}y_i^{(1)}))\\
                             &=&\sum (D H_{lj}) D^{k}(H^{ji}y_i^{(1)})+\sum H_{lj}D^{k+1}(H^{ji}y_i^{(1)})\\
                              &=&\sum Y_t^{(1)}\frac{\partial H_{lj}}{\partial y_t}H^{js}H_{sm}(D^{k}(H^{mi}y_i^{(1)}))+Y_l^{(k+2)}\\
                             &=&Y_l^{(k+2)}+\sum Y_t^{(1)} \theta_{ls}^{T^*}(\frac{\partial}{\partial y_t})Y_s^{(k+1)}.\\
                             &=&\tilde D Y_l^{k+1}+\sum Y_t^{(1)}\tilde\nabla_{\frac{\partial}{\partial y_t}}Y_l^{k+1}
                                                          \end{eqnarray*}
Similarly,
\begin{eqnarray*} DE_l^{(k)}&=&\tilde D E_l^{k}+\sum Y_t^{(1)}\tilde\nabla_{\frac{\partial}{\partial y_t}}E_l^{k}\\
 DF_l^{(k)}&=&\tilde D F_l^{k}+\sum Y_t^{(1)}\tilde\nabla_{\frac{\partial}{\partial y_t}}F_l^{k}\\
\end{eqnarray*}

For a smooth function $f$ on $U$,
$$\tilde Df+\sum Y_t^{(1)}\tilde\nabla_{\frac{\partial}{\partial y_t}}f=0+\sum Y_t^{(1)}\frac{\partial f}{\partial y_t}=Df.$$
Since both $D$ and $\tilde D +\sum  Y_t^{(1)}\tilde\nabla_{\frac{\partial}{\partial y_t}}$ are derivations on $\Omega^{0,0}_U(A_\infty(E,F))$ and $Y_l^{(k+1)}$, $E_l^{(k)}$,  $F_l^{(k)}$
and smooth functions on $U$ generate  $\Omega^{0,0}_U(A_\infty(E,F))$,  $D$  is  equal to $\tilde D + \sum Y_t^{(1)}\tilde\nabla_{\frac{\partial}{\partial y_t}}$.
\end{proof}

We  can find the relation between $\bpartial$ and $\bpartial'$ by the above lemma and the following fact.
\begin{lemma}\label{lem:comD}
 $$[D,\bpartial]=0, \quad [\tilde D,\bpartial']=0$$
\end{lemma}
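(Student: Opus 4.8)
The plan is to verify both identities by checking them on a set of generators of the relevant sheaves of algebras and then invoking the derivation (respectively Leibniz) property to extend to the whole algebra. Both $D$ and $\bpartial$ act as derivations on $\Omega_U^{0,*}(A_\infty(E,F))$—$D$ by construction (\ref{eqn:D}) with the conventions $D\bar dy_i=0$, $Df=\sum_i(\partial f/\partial y_i)y_i^{(1)}$, and $\bpartial$ by (\ref{eqn:bp})—so their graded commutator $[D,\bpartial]$ is again a derivation. A derivation that vanishes on a generating set vanishes identically, so it suffices to evaluate $[D,\bpartial]$ on the algebra generators $y_i^{(l)}$, $e_i^{(l)}$, $f_i^{(l)}$ together with the smooth functions and the forms $\bar dy_i$ on $U$.

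For the first identity I would compute directly on each type of generator. On the jet variables $y_i^{(l)}$, $e_i^{(l)}$, $f_i^{(l)}$ we have $\bpartial(\,\cdot\,)=0$ by (\ref{eqn:bp}), while $D$ sends each of these to a $\C$-linear combination of jet variables of one higher order (via $\tilde D$ and the extra term $\sum_i(\partial/\partial y_i)\otimes y_i^{(1)}$, which annihilates the purely algebraic generators since they are constant in $y$); hence $\bpartial(D\,\cdot\,)=0=D(\bpartial\,\cdot\,)$ on these. On a smooth function $f$ the content is the commutation of the holomorphic first-order operator $\sum_i y_i^{(1)}\partial/\partial y_i$ with $\bpartial$: since $y_i^{(1)}$ is $\bpartial$-holomorphic, $\bpartial(Df)=\sum_i y_i^{(1)}\,\bpartial(\partial f/\partial y_i)$, while $D(\bpartial f)=\sum_i y_i^{(1)}(\partial/\partial y_i)(\bpartial f)$, and these agree because mixed partials $\partial_{y_i}\bpartial=\bpartial\partial_{y_i}$ commute on smooth functions. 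On the antiholomorphic forms $\bar dy_i$ both operators give zero ($D\bar dy_i=0$ by definition, and $\bpartial\bar dy_i=0$). Thus $[D,\bpartial]$ annihilates every generator, so $[D,\bpartial]=0$.

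The second identity $[\tilde D,\bpartial']=0$ is proved by the identical mechanism, but now using the generators adapted to the $B_m$-holomorphic structure: by Proposition \ref{prop:isoAB} the transported derivation $\tilde D$ satisfies $\tilde D Y_i^{(l)}=Y_i^{(l+1)}$, $\tilde D E_i^{(l)}=E_i^{(l+1)}$, $\tilde D F_i^{(l)}=F_i^{(l+1)}$, and by (\ref{eqn:bpp}) the operator $\bpartial'$ kills $Y_i^{(l)}$, $E_i^{(l)}$, $F_i^{(l)}$. Hence on these generators $\bpartial'(\tilde D\,\cdot\,)=0=\tilde D(\bpartial'\,\cdot\,)$ immediately, and on smooth functions and $\bar dy_i$ the same computation as above (with $\tilde D$ in place of $D$) applies. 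Since both are derivations, the commutator vanishes on a generating set and therefore identically.

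The only delicate point—and the step I expect to require the most care—is the bookkeeping on smooth functions for the first identity, where $D$ genuinely mixes the geometric variable $y$ with the jet variable $y_i^{(1)}$; one must use that $y_i^{(1)}$ is annihilated by $\bpartial$ and that $\partial_{y_i}$ and $\bpartial$ commute as operators on $\Omega_U^{0,*}$. Everything else is a routine generator-by-generator check combined with the derivation property. (One should also confirm that $[D,\bpartial]$ and $[\tilde D,\bpartial']$ are honest graded derivations given the $(0,\ast)$-form degree of $\bpartial$, but since $\bpartial$ raises antiholomorphic form-degree by one and $D$, $\tilde D$ preserve it, the signs in the Leibniz rule are consistent and cause no difficulty.)
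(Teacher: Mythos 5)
Your proposal is correct and follows essentially the same route as the paper: observe that $[D,\bpartial]$ and $[\tilde D,\bpartial']$ are derivations and check that they vanish on the generators $y_i^{(l)}, e_i^{(l)}, f_i^{(l)}$ (respectively $Y_i^{(l)}, E_i^{(l)}, F_i^{(l)}$) and on smooth functions. You merely spell out the generator-by-generator computations that the paper states without detail (the only cosmetic difference being that for $[\tilde D,\bpartial']$ on a smooth function $f$ one has $\tilde D f=0$, so that check is even more immediate than the one for $D$).
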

\begin{proof}$[D,\bpartial]$ and $[\tilde D,\bpartial']$ are  derivations on the sheaf of smooth sections of $A_\infty(E,F)$. Locally,  for  $(U,y,e,f)\in \cU$ and a smooth function $f$ on $U$,
$$[D,\bpartial]y_l^{(k+1)}=[D,\bpartial]e_l^{(k)}=[D,\bpartial]f_l^{(k)}=[D,\bpartial]f=0.$$
$$[\tilde D,\bpartial']Y_l^{(k+1)}=[\tilde D,\bpartial']E_l^{(k)}=[\tilde D,\bpartial']F_l^{(k)}=[\tilde D,\bpartial']f=0.$$
\end{proof}

Let $K=\bpartial-\bpartial'$.  For any smooth function $f$ on $X$,  $Kf=\bpartial f-\bpartial'f=0$.
By (\ref{eqn:bp}) and (\ref{eqn:bpp}),
$$K(ab)=(Ka)b+a(Kb),\quad \text{for } a, b\in \Omega^{0,0}(A_m(E,F)).$$
So $K$ is determined if we know $KY_l^{(k+1)}$,  $KE_l^{(k)}$ and $KF_l^{(k)}$. Using the following lemma, we can calculate them out.

\begin{lemma}\label{lem:KD}
$$[K,\tilde D]= \sum Y_t^{(1)}[\tilde\nabla_{\frac{\partial}{\partial y_t}},\bpartial]=\sum  -Y_t^{(1)}\tilde \Theta(\frac{\partial}{\partial y_t})+\sum Y_t^{(1)}[\tilde\nabla_{\frac{\partial}{\partial y_t}},K].$$
\end{lemma}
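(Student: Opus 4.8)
The plan is to reduce $[K,\tilde D]$ to a commutator that is already under control through Lemmas \ref{lem:DD} and \ref{lem:comD}, and then to extract the curvature by splitting $\bpartial=\bpartial'+K$. Since $K=\bpartial-\bpartial'$, I would first write
\[[K,\tilde D]=[\bpartial,\tilde D]-[\bpartial',\tilde D].\]
By Lemma \ref{lem:comD} the second term vanishes, so $[K,\tilde D]=[\bpartial,\tilde D]$. I would then substitute $\tilde D=D-Y_t^{(1)}\tilde\nabla_{\frac{\partial}{\partial y_t}}$ from Lemma \ref{lem:DD}, obtaining
\[[K,\tilde D]=[\bpartial,D]-\bigl[\bpartial,\,Y_t^{(1)}\tilde\nabla_{\frac{\partial}{\partial y_t}}\bigr].\]
Again by Lemma \ref{lem:comD} we have $[\bpartial,D]=0$, which leaves $[K,\tilde D]=\bigl[Y_t^{(1)}\tilde\nabla_{\frac{\partial}{\partial y_t}},\bpartial\bigr]$.

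The decisive point for the first displayed equality is that the multiplier $Y_t^{(1)}$ can be pulled outside the bracket, and for this I would record that $Y_t^{(1)}=y_t^{(1)}$. Indeed, taking $k=0$ in (\ref{eqn:YEF}) gives $Y_l^{(1)}=H_{lj}H^{ji}y_i^{(1)}=y_l^{(1)}$, since $(H^{ji})$ is the inverse matrix of $(H_{ij})$; hence $\bpartial Y_t^{(1)}=\bpartial y_t^{(1)}=0$ by (\ref{eqn:bp}). Applying the bracket to a section $s$ and using that $\bpartial$ is a derivation,
\[\bigl[Y_t^{(1)}\tilde\nabla_{\frac{\partial}{\partial y_t}},\bpartial\bigr]s=Y_t^{(1)}\tilde\nabla_{\frac{\partial}{\partial y_t}}\bpartial s-(\bpartial Y_t^{(1)})\,\tilde\nabla_{\frac{\partial}{\partial y_t}}s-Y_t^{(1)}\bpartial\,\tilde\nabla_{\frac{\partial}{\partial y_t}}s,\]
and the middle term drops out precisely because $\bpartial Y_t^{(1)}=0$. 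This yields $[K,\tilde D]=Y_t^{(1)}[\tilde\nabla_{\frac{\partial}{\partial y_t}},\bpartial]$, the first asserted equality.

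For the second equality I would decompose $\bpartial=\bpartial'+K$, so that
\[[\tilde\nabla_{\frac{\partial}{\partial y_t}},\bpartial]=[\tilde\nabla_{\frac{\partial}{\partial y_t}},\bpartial']+[\tilde\nabla_{\frac{\partial}{\partial y_t}},K].\]
It then remains to identify $[\tilde\nabla_{\frac{\partial}{\partial y_t}},\bpartial']$ with $-\tilde\Theta(\frac{\partial}{\partial y_t})$. This is exactly the definition of the curvature of the Chern connection $\tilde\nabla$: as $\tilde\nabla_{\frac{\partial}{\partial y_t}}$ is the contraction of $\tilde\nabla^{1,0}$ with the coordinate field and $\bpartial'$ is the $(0,1)$-part of $\tilde\nabla$, the commutator is the contraction of $\tilde\Theta=[\bpartial',\tilde\nabla^{1,0}]$ with $\frac{\partial}{\partial y_t}$, the coordinate fields commuting and the Chern identity $(\tilde\nabla^{1,0})^2=0$ guaranteeing that no further terms survive. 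Substituting gives the stated formula. I expect the only genuine care required throughout to be the bookkeeping of sign and operator-ordering conventions in these graded commutators and in the curvature contraction; once $Y_t^{(1)}=y_t^{(1)}$ is recognized as $\bpartial$-holomorphic so that no term is lost in the pull-out, the identity follows directly from Lemmas \ref{lem:DD} and \ref{lem:comD} together with the definition of $\tilde\Theta$, with the curvature sign being the main point to pin down rather than any conceptual obstacle.
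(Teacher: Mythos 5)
Your proposal is correct and follows essentially the same route as the paper: expand $K=\bpartial-\bpartial'$, kill $[\bpartial',\tilde D]$ and $[\bpartial,D]$ via Lemma \ref{lem:comD}, substitute $\tilde D=D-Y_t^{(1)}\tilde\nabla_{\frac{\partial}{\partial y_t}}$ from Lemma \ref{lem:DD}, and split $\bpartial=\bpartial'+K$ to expose $\tilde\Theta$. The paper's proof likewise opens with the observation $\bpartial Y_l^{(1)}=\bpartial y_l^{(1)}=0$ that you use to pull $Y_t^{(1)}$ out of the bracket, so your extra detail there only makes explicit what the paper leaves implicit.
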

\begin{proof}$\bpartial {Y_l}^{(1)}=\bpartial y_l^{(1)}=0.$
\begin{eqnarray*}
[K,\tilde D]&=&[\bpartial-\bpartial',\tilde D]\\
            &=&[\bpartial, \tilde D] \quad\quad\quad\quad\quad\quad\quad\quad\quad\text{(by Lemma \ref{lem:comD})}\\
            &=&[\bpartial, D-\sum Y_t^{(1)}\tilde\nabla_{\frac{\partial}{\partial y_t}}]\quad \quad\quad\quad\text{(by Lemma \ref{lem:DD})}\\
            &=&\sum Y_t^{(1)}[\tilde\nabla_{\frac{\partial}{\partial y_t}},\bpartial]\quad\quad\quad\quad\quad\quad\text{(by Lemma \ref{lem:comD})}\\
            &=&\sum Y_t^{(1)}[\tilde\nabla_{\frac{\partial}{\partial y_t}},\bpartial'+K]\\
            &=&\sum -Y_t^{(1)}\tilde \Theta(\frac{\partial}{\partial y_t})+\sum Y_t^{(1)}[\tilde\nabla_{\frac{\partial}{\partial y_t}},K].
\end{eqnarray*}
\end{proof}
Let
$C_{j_1,\cdots,j_a}=-\frac 1{(a-1)!}\frac {(j_1+\cdots+j_a)!}{j_1!j_2!\cdots j_a!}$, if $j_1\geq 0$ and $j_l\geq 1$ for $l\geq 2$, otherwise $C_{j_1,\cdots,j_a}=0$.
\begin{lemma}\label{lem:Dbar}
For $k\geq 0$, If $P_l^{(k)}$ is  $Y_l^{(k+1)}$,  $E_l^{(k)}$ or $F_l^{(k)}$, and $F$ is $F^{T^*}$, $F^{E}$ or $F^{F}$ respectively, then
$$K P_l^{(k)}=\sum_{a=2}^k\sum_{j_1+\cdots+j_a=k}\sum_{i_1,\cdots,i_a}C_{j_1,\cdots,j_a}F^l_{i_1,\cdots, i_a}P_{i_1}^{(j_1)}Y_{i_2}^{(j_1)}\cdots Y_{i_a}^{(j_a)}.$$
\end{lemma}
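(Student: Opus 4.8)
The plan is to prove the identity by induction on $k$, turning the relation $P_l^{(k+1)}=\tilde D P_l^{(k)}$ into a recursion for $KP_l^{(k)}$ by means of Lemma \ref{lem:KD}. For the base case $k=0$, the definitions in (\ref{eqn:YEF}) collapse to $Y_l^{(1)}=y_l^{(1)}$, $E_l^{(0)}=e_l^{(0)}$, $F_l^{(0)}=f_l^{(0)}$ (the metric factors cancel at the lowest level), so (\ref{eqn:bp}) and (\ref{eqn:bpp}) give $KP_l^{(0)}=\bpartial P_l^{(0)}-\bpartial'P_l^{(0)}=0$, which matches the empty sum on the right. For the inductive step I write
\[
KP_l^{(k+1)}=K\tilde D P_l^{(k)}=\tilde D\bigl(KP_l^{(k)}\bigr)+[K,\tilde D]P_l^{(k)},
\]
and substitute Lemma \ref{lem:KD} for the commutator, so that
\[
KP_l^{(k+1)}=\tilde D\bigl(KP_l^{(k)}\bigr)-Y_t^{(1)}\tilde\Theta(\tfrac{\partial}{\partial y_t})P_l^{(k)}+Y_t^{(1)}[\tilde\nabla_{\frac{\partial}{\partial y_t}},K]P_l^{(k)}.
\]
The task is to feed the inductive hypothesis into these three pieces and recognise the result as the claimed sum at level $k+1$.

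For the first piece (i) I use that $\tilde D$ kills every coefficient function and raises a single jet index, $\tilde D P^{(j)}=P^{(j+1)}$; by Leibniz it leaves the tensor $F_a$ and the number $a$ of generators unchanged while raising exactly one of the weights $j_1,\dots,j_a$ by one. The second piece (ii) is the curvature seed: by (\ref{eqn:curB1}) together with $F_2=\Theta$, the operator $\tilde\Theta(\tfrac{\partial}{\partial y_t})$ contracts the single generator $P_l^{(k)}$ with $F_2$, so after the prefactor $Y_t^{(1)}$ this is precisely the $a=2$ term with weights $(j_1,j_2)=(k,1)$. For the third piece (iii) I expand $[\tilde\nabla_{\frac{\partial}{\partial y_t}},K]P_l^{(k)}=\tilde\nabla_{\frac{\partial}{\partial y_t}}(KP_l^{(k)})-K\bigl(\tilde\nabla_{\frac{\partial}{\partial y_t}}P_l^{(k)}\bigr)$ and insert the inductive hypothesis: here $\tilde\nabla^{1,0}$ differentiates the coefficient $F_{a-1}$ and, via (\ref{eqn:conB1}), acts by the connection on the input indices of the generators, while $-K(\tilde\nabla_{\frac{\partial}{\partial y_t}}P_l^{(k)})$ supplies the connection term on the output index $l$. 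By the very definition of the higher covariant derivatives these connection terms reconstitute $\nabla^{1,0}F_{a-1}=F_a$, and the new factor $Y_t^{(1)}$ adds a single tangent slot of weight $1$; thus piece (iii) raises $a$ by one. Because (ii) and (iii) act on the bare generator $P_l^{(k)}$ rather than on the whole inductive sum, only single tensors $F_a$ occur and no products of curvatures are ever produced.

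It remains to match coefficients. For a fixed $a$ and a target distribution $(j_1,\dots,j_a)$ with $\sum_i j_i=k+1$, the required constant $C_{j_1,\dots,j_a}$ is governed by the multinomial recurrence
\[
\frac{(k+1)!}{j_1!\cdots j_a!}=\sum_{s=1}^{a}\frac{k!\,j_s}{j_1!\cdots j_a!}.
\]
The slots that can be legally decremented — an interior tangent slot with $j_s\ge 2$, or the first slot with $j_1\ge 1$ — are exactly those supplied by piece (i), whereas a tangent slot with $j_s=1$ cannot come from $\tilde D$ and must be created by the new $Y_t^{(1)}$ of piece (ii) (for $a=2$) or piece (iii) (for $a\ge 3$); the case $j_1=0$ contributes a zero term to the recurrence and needs no source. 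I expect the main obstacle to be precisely this last accounting. Since $F_a$ is symmetric in its tangent indices (Lemmas \ref{lem:Fn} and \ref{lem:Rn}) but the covariant derivative in (iii) singles out the new index, I must check that the symmetrization converts the level-$k$ normalisation $\tfrac1{(a-2)!}$ into $\tfrac1{(a-1)!}$ and reproduces each weight-$1$ boundary slot with exactly the multinomial weight above. Verifying this symmetry factor, and confirming that all the connection terms in (iii) genuinely assemble into $F_a$ with nothing left over, is where the real work of the proof lies; once it is in place, the identity follows by the elementary recurrence displayed above.
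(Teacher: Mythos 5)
Your proposal follows essentially the same route as the paper: induction on $k$ with the trivial base case $KP_l^{(0)}=0$, the recursion $KP_l^{(k+1)}=\tilde D(KP_l^{(k)})+[K,\tilde D]P_l^{(k)}$ expanded via Lemma \ref{lem:KD} into the same three pieces (the $\tilde D$ shift of weights, the curvature seed giving the $a=2$ term, and the commutator term producing $\nabla^{1,0}F_a=F_{a+1}$ with a new weight-one slot), and the same multinomial recurrence for the constants $C_{j_1,\dots,j_a}$. The coefficient bookkeeping you flag as the remaining work is exactly what the paper carries out (rewriting $C_{j_1,\dots,j_a}$ as $\tfrac{a}{k+1}C_{j_1,\dots,j_a,1}$ and using the symmetry of $F_{a+1}$), so the argument is sound and matches the paper's.
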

\begin{proof} Here we give the proof for $P_l^{(k)}=Y_l^{(k+1)}$. The proofs are similar for     $P_l^{(k)}=E_l^{(k)}$ and $P_l^{(k)}=F_l^{(k)}$.

We prove it by induction on $k$. When $n=0$, $ {Y_l}^{(1)}=y_l^{(1)}$. So $\bpartial Y_l^{(1)}=\bpartial y_l^{(1)}=0$. Since  $\bpartial'  Y_l^{(1)}=0,$
$K Y_l^{(1)}=0.$ the lemma is true.
Assume the lemma is true for $n=k$. If $n=k+1$, by Lemma \ref{lem:KD},
\begin{eqnarray*}
KY_l^{(k+2)}&=&K\tilde D Y_l^{(k+1)}\\
&=&\tilde D K Y_l^{(k+1)}-\sum Y_t^{(1)}\tilde \Theta(\frac{\partial}{\partial y_t})Y_l^{(k+1)}+\sum Y_t^{(1)}[\tilde\nabla_{\frac{\partial}{\partial y_t}},K]Y_l^{(k+1)} \quad(\text{By Lemma\ref{lem:KD}})\\
&=&\tilde D(\sum_{a=2}^k\sum_{j_1+\cdots+j_a=k}C_{j_1,\cdots,j_a}\sum_{i_1,\cdots,i_a}F^l_{i_1,\cdots, i_a}Y_{i_1}^{(j_1+1)}Y_{i_2}^{(j_1)}\cdots Y_{i_a}^{(j_a)})\\
& &-\sum Y_t^{(1)}\Theta^{T^*}_{ls}(\frac{\partial}{\partial y_t})Y_{s}^{(k+1)}\\
&&+\sum Y_t^{(1)}\sum_{a=2}^k\sum_{j_1+\cdots+j_a=k}\sum_{i_1,\cdots,i_a}C_{j_1,\cdots,j_a}(\nabla_{\frac{\partial}{\partial y_t}}^{T^*}F_a)^l_{i_1,\cdots, i_a}Y_{i_1}^{(j_1+1)}Y_{i_2}^{(j_1)}\cdots Y_{i_a}^{(j_a)}\\
&=&\sum_{a=2}^k\sum_{j_1+\cdots+j_a=k+1}\sum_{i_1,\cdots,i_a}(\sum_{l=1}^a C_{j_1,\cdots,j_l-1,\cdots,j_a})F^l_{i_1,\cdots, i_a}Y_{i_1}^{(j_1+1)}Y_{i_2}^{(j_1)}\cdots Y_{i_a}^{(j_a)}\\
& &-F_{i_1,i_2}^2Y_{i_1}^{(k+1)}Y_{i_2}^{(1)}\\
& &+\sum_{a=2}^k\sum_{j_1+\cdots+j_a=k}\sum_{i_1,\cdots,i_a}\frac{a}{k+1}C_{j_1,\cdots,j_a,1}F^l_{i_1,\cdots, i_a,i_{a+1}}Y_{i_1}^{(j_1+1)}Y_{i_2}^{(j_1)}\cdots Y_{i_a}^{(j_a)}Y_{i_{a+1}}^{(1)}\\
&=&\sum_{a=2}^{k+1}\sum_{j_1+\cdots+j_a=k+1}\sum_{i_1,\cdots,i_a}C_{j_1,\cdots,j_a}F^l_{i_1,\cdots, i_a}Y_{i_1}^{(j_1+1)}Y_{i_2}^{(j_1)}\cdots Y_{i_a}^{(j_a)}.
\end{eqnarray*}
So the lemma is true  for $n=k+1$. Thus the lemma is true for any $k\geq 0$.
\end{proof}
By Lemma \ref{lem:Dbar}, we immediately have
\begin{lemma}\label{lem:KP}
For $k\geq 0$, If $P_l^{(k)}$ is  $Y_l^{(k+1)}$,  $E_l^{(k)}$ or $F_l^{(k)}$, and $F$ is $F^{T^*}$, $F^{E}$ or $F^{F}$ respectively, then
$$[\tilde\nabla^{1,0}, K]P_l^{(k)}=\sum_{a=2}^k\sum_{j_1+\cdots+j_a=k}\sum_{i_1,\cdots,i_{a+1}}C_{j_1,\cdots,j_a}dy_{i_{a+1}}\wedge F^l_{i_1,\cdots, i_a,i_{a+1}}P_{i_1}^{(j_1)}Y_{i_2}^{(j_1)}\cdots Y_{i_a}^{(j_a)}.$$
\end{lemma}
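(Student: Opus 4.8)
The plan is to read $[\tilde\nabla^{1,0},K]$ as the exterior covariant derivative of the identity in Lemma \ref{lem:Dbar}, exploiting that the commutator $[\tilde\nabla^{1,0},K]=\tilde\nabla^{1,0}\circ K-K\circ\tilde\nabla^{1,0}$ is arranged precisely to isolate the covariant derivative of the curvature coefficients while discarding every connection term. Concretely, for $s=P_l^{(k)}$ I would write
$$[\tilde\nabla^{1,0},K]P_l^{(k)}=\tilde\nabla^{1,0}\bigl(K P_l^{(k)}\bigr)-K\bigl(\tilde\nabla^{1,0}P_l^{(k)}\bigr),$$
substitute the expression for $KP_l^{(k)}$ from Lemma \ref{lem:Dbar} into the first summand, and expand it by the Leibniz rule for the Chern connection $\tilde\nabla^{1,0}$, treating $\tilde\nabla^{1,0}$ as a derivation of the algebra.

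Differentiating the right-hand side of Lemma \ref{lem:Dbar}, $\tilde\nabla^{1,0}$ falls either on the curvature coefficient $F^l_{i_1\cdots i_a}$ or on one of the section factors $P_{i_1}^{(j_1)},Y_{i_2}^{(j_2)},\cdots,Y_{i_a}^{(j_a)}$. By the very definition $F_{a+1}=\nabla^{1,0}F_a$ recorded in Section 2, the covariant derivative of the tensor $F_a$ produces $F_{a+1}$; in components this adds one tangential slot and wedges in the $(1,0)$-form $dy_{i_{a+1}}$, turning $F^l_{i_1\cdots i_a}$ into $dy_{i_{a+1}}\wedge F^l_{i_1\cdots i_a,i_{a+1}}$. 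This is exactly the term I want to survive, and it carries the same constant $C_{j_1,\cdots,j_a}$ since the derivative touches only the coefficient, not the index structure of the sum.

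The role of the commutator is then to kill every remaining connection contribution. Because $\tilde\nabla$ restricts on the generators to the Chern connections of $T^*,E,F$ through the forms $\theta^{T^*},\theta^E,\theta^F$ by (\ref{eqn:conB1}), and covariant differentiation commutes with contraction of indices, the $\theta$-terms produced when $\tilde\nabla^{1,0}$ hits the contracted section factors $Y_{i_2}^{(j_2)},\cdots$ cancel the corresponding connection pieces hidden inside $\tilde\nabla^{1,0}F^l_{i_1\cdots i_a}$. The only surviving connection term is the one attached to the free index $l$; for that I would use $\tilde\nabla^{1,0}P_l^{(k)}=\theta_{ll'}P_{l'}^{(k)}$ together with the fact that $K$ annihilates the base $(1,0)$-form $\theta$ (its coefficients are scalar functions, and $Kf=0$), so that $K(\tilde\nabla^{1,0}P_l^{(k)})=\pm\,\theta_{ll'}\wedge KP_{l'}^{(k)}$. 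This matches and cancels the free-index $\theta$-term coming from the first summand, leaving only the curvature-derivative term and hence the asserted identity.

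The main obstacle is purely bookkeeping: keeping correct the sign and placement of the $(1,0)$-factor $dy_{i_{a+1}}$ when it is wedged past the $(0,1)$-valued form $F_a$, and checking that the tensorial cancellation of the $\theta$-terms is indeed complete. Since the right-hand side of Lemma \ref{lem:Dbar} is a globally defined, genuinely tensorial expression and $\tilde\nabla$ is compatible with the tensor (contraction) structure, this cancellation is automatic, which is exactly why the statement follows \emph{immediately} from Lemma \ref{lem:Dbar}.
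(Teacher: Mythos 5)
Your argument is correct and is precisely what the paper leaves implicit: Lemma \ref{lem:KP} is given no proof beyond ``By Lemma \ref{lem:Dbar}, we immediately have,'' and the intended justification is exactly your observation that applying the commutator with $\tilde\nabla^{1,0}$ to the tensorial identity of Lemma \ref{lem:Dbar} replaces the coefficient tensor $F_a$ by its covariant derivative $F_{a+1}=\nabla^{1,0}F_a$ (contributing the factor $dy_{i_{a+1}}\wedge F^l_{i_1,\cdots,i_a,i_{a+1}}$) while every connection term cancels by tensoriality. The one convention to watch is that, since both $\tilde\nabla^{1,0}$ and $K$ carry form degree one (cf.\ $\Theta=[\bpartial,\nabla^{1,0}]$ and the expansion (\ref{eqn:theta})), the bracket is the graded commutator $\tilde\nabla^{1,0}K+K\tilde\nabla^{1,0}$ rather than the naive difference; this sign is absorbed by the one you already track when moving the $(1,0)$-form $\theta$ past $K$, so the conclusion is unaffected.
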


\subsection{The mean curvature of $A_m(E,F)$}

By Lemma \ref{lem:connection}, the connections on $A_m(E,F)$ satisfy
$$\nabla=\tilde \nabla+K-K^*.$$
So their curvatures satisfy
\begin{eqnarray}\label{eqn:theta}
\Theta=[\bpartial, \nabla]=[\bpartial'+K,\tilde\nabla^{1,0}-K^*]=\tilde \Theta+[K,\tilde\nabla^{1,0}]-[\bpartial', K^*]-[K,K^*].
\end{eqnarray}
Now $\tilde\nabla^{1,0}$ and $K$ are derivations on the sheaf of smooth sections of $A_m(E,F)$, so $[\tilde\nabla^{1,0}, K]$ is a derivation on the sheaf of smooth sections of $A_m(E,F)$.
\begin{lemma}\label{lem:connK}
If   $h_{T^*}$, $h_E$ and $h_F$ are  Einstein-Hermitian metrics then
$$\sum [\tilde\nabla^{1,0},K] (\frac{\partial}{\partial y_i}, \frac{\partial}{\partial \bar y_j})H_{ij}=0.$$
\end{lemma}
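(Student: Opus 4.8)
The plan is to exploit that $[\tilde\nabla^{1,0},K]$ is a derivation on the sheaf of smooth sections of $A_m(E,F)$, as was noted just before the statement; contracting a derivation-valued $(1,1)$-form against the metric produces a $\C$-linear combination of derivations, hence again a derivation. A derivation vanishes as soon as it vanishes on a set of algebra generators, so I reduce the claim to evaluating the operator $[\tilde\nabla^{1,0},K](\frac{\partial}{\partial y_i},\frac{\partial}{\partial \bar y_j})H_{ij}$ on the generators $Y_l^{(k+1)}$, $E_l^{(k)}$, $F_l^{(k)}$ (collectively the $P_l^{(k)}$) and on smooth functions, which together generate $\Omega^{0,0}_U(A_\infty(E,F))$.

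First I would dispose of the trivial case of a smooth function $f$ on $U$. Since $Kf=0$ and $K(1)=0$, the Leibniz rule gives $[\tilde\nabla^{1,0},K]f=0$, and therefore its mean-curvature contraction vanishes as well. Thus everything comes down to the generators $P_l^{(k)}$.

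The main step is to feed the explicit formula of Lemma \ref{lem:KP} into the contraction. Applying $[\tilde\nabla^{1,0},K]$ to $P_l^{(k)}$ yields a sum over $a\geq 2$ of terms of the shape $dy_{i_{a+1}}\wedge F^l_{i_1,\cdots,i_a,i_{a+1}}\,P_{i_1}^{(j_1)}Y_{i_2}^{(j_2)}\cdots Y_{i_a}^{(j_a)}$, where $F$ is $F^{T^*}$, $F^E$ or $F^F$ according as $P$ is $Y$, $E$ or $F$, and the trailing product carries no form degree. Contracting the $(1,0)$-slot against $\frac{\partial}{\partial y_i}$ collapses $dy_{i_{a+1}}(\frac{\partial}{\partial y_i})=\delta_{i_{a+1},i}$, forcing the newly created covariant-derivative index $i_{a+1}$ to equal $i$; evaluating the remaining $(0,1)$-form factor on $\frac{\partial}{\partial \bar y_j}$ and multiplying by $H_{ij}$ then produces precisely $F_n(\ldots,\frac{\partial}{\partial y_i},\frac{\partial}{\partial \bar y_j})H_{ij}$ in its last tangent argument, i.e. the mean-curvature tensor field $M_n$ of Section 2. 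Because the sum runs over $a\geq 2$, every $M_n$ that appears has $n\geq 3$.

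Finally I would invoke Lemma \ref{lem:mean}: under the Einstein-Hermitian hypothesis on $h_{T^*}$, $h_E$ and $h_F$ one has $M^{T^*}_n=M^E_n=M^F_n=0$ for all $n\geq 3$. Hence every term of the contracted sum vanishes, so the operator kills each generator and each smooth function, and being a derivation it is identically zero. The only real care is the bookkeeping in the third step, matching the surviving index $i_{a+1}$ against the definition $M_n=F_n(\frac{\partial}{\partial y_i},\frac{\partial}{\partial \bar y_j})H_{ij}$ and checking that the $a\geq 2$ range keeps the subscript above the threshold of Lemma \ref{lem:mean}; this is routine once Lemma \ref{lem:KP} is available, and there is no genuine analytic obstacle.
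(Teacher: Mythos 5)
Your proposal is correct and follows essentially the same route as the paper: reduce to generators via the derivation property, evaluate on $P_l^{(k)}$ using Lemma \ref{lem:KP} so that the contraction against $\frac{\partial}{\partial y_i}$, $\frac{\partial}{\partial \bar y_j}$ and $H_{ij}$ produces the mean-curvature tensors $M_n$ with $n\geq 3$, and kill these by Lemma \ref{lem:mean} under the Einstein--Hermitian hypothesis. The handling of smooth functions likewise matches the paper's one-line computation.
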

\begin{proof}
 $\tilde\nabla^{1,0}$ and $K$ are derivations on the sheaf of smooth sections of $A_m(E,F)$, so $[\tilde\nabla^{1,0}, K]$ and $\sum [\tilde\nabla^{1,0},K] (\frac{\partial}{\partial y_i}, \frac{\partial}{\partial \bar y_j})H_{ij}$ are derivations on the sheaf of smooth sections of $A_m(E,F)$.
 From Lemma \ref{lem:KP}, if $P_l^{(k)}$ is  $Y_l^{(k+1)}$,  $E_l^{(k)}$ or $F_l^{(k)}$, and $F$ is $F^{T^*}$, $F^{E}$ or $F^{F}$ respectively,
$$[\tilde\nabla^{1,0},K] P_l^{(k)}(\frac{\partial}{\partial y_i}, \frac{\partial}{\partial \bar y_j})H_{ij}=\sum_{a=2}^k\sum_{j_1+\cdots+j_a=k}\sum_{i_1,\cdots,i_a}C_{j_1,\cdots,j_a} F^l_{i_1,\cdots, i_a,i}(\frac{\partial}{\partial \bar y_j})H_{ij}P_{i_1}^{(j_1)}Y_{i_2}^{(j_1)}\cdots Y_{i_a}^{(j_a)}.$$
By Lemma \ref{lem:mean}, if $h_{T^*}$, $h_E$ and $h_F$  are Einstein-Hermitian metrics, then for $n\geq 3$,  $M^{T^*}_n$,  $M^{E}_n$ and $M^{F}_n$ all vanish. So
$$\sum F^l_{i_1,\cdots, i_a,i}(\frac{\partial}{\partial \bar y_j})H_{ij}=0.$$
Thus $\sum [\tilde\nabla^{1,0},K] P_l^{(k)}(\frac{\partial}{\partial y_i}, \frac{\partial}{\partial \bar y_j})H_{ij}=0$. For any smooth function $f$ on $X$,
$$[\tilde\nabla^{1,0},K]f=\tilde\nabla^{1,0} Kf+K\tilde\nabla^{1,0}f=K \partial f=0.$$ Since $\sum [\tilde\nabla^{1,0},K] (\frac{\partial}{\partial y_i}, \frac{\partial}{\partial \bar y_j})H_{ij}$
is a derivation on the sheaf of smooth sections of $A_m(E,F)$, it is zero.
\end{proof}

 We have the following theorem for the mean curvature of the connection $\nabla$ on $A_m(E,F)$.
\begin{thm}\label{thm:curv}
If   $h_{T^*}$, $h_E$ and $h_F$ are  Einstein-Hermitian metrics then the mean curvature of $\nabla$ is
$$\sum \Theta(\frac{\partial}{\partial y_i}, \frac{\partial}{\partial \bar y_j})H_{ij}=\sum \tilde\Theta(\frac{\partial}{\partial y_i}, \frac{\partial}{\partial \bar y_j})H_{ij}+\sum K(\frac{\partial}{\partial \bar y_j})K^*(\frac{\partial}{\partial y_i})H_{ij}-\sum K^*(\frac{\partial}{\partial y_i})K(\frac{\partial}{\partial \bar y_j})H_{ij}.$$
In particular, if the mean curvatures of  $h_{T^*}$, $h_E$ and $h_F$ are zero,
$$\sum \Theta(\frac{\partial}{\partial y_i}, \frac{\partial}{\partial \bar y_j})H_{ij}=\sum K(\frac{\partial}{\partial \bar y_j})K^*(\frac{\partial}{\partial y_i})H_{ij}-\sum K^*(\frac{\partial}{\partial y_i})K(\frac{\partial}{\partial \bar y_j})H_{ij}.$$
\end{thm}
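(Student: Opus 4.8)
The plan is to take the mean curvature of the curvature identity (\ref{eqn:theta}) summand by summand. That identity already expresses $\Theta$ as $\tilde\Theta+[K,\tilde\nabla^{1,0}]-[\bpartial',K^*]-[K,K^*]$, so contracting with $H_{ij}$ reduces the theorem to computing four separate mean curvatures. The first summand contributes $\tilde\Theta(\frac{\partial}{\partial y_i},\frac{\partial}{\partial\bar y_j})H_{ij}$ verbatim; by (\ref{eqn:curB1}) this is the mean curvature of the connection $\tilde\nabla$ built diagonally from $\Theta^{T^*}$, $\Theta^E$ and $\Theta^F$, and it is precisely the $\tilde\Theta$ term in the statement.

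Two of the three remaining summands are routine. For $[K,\tilde\nabla^{1,0}]=-[\tilde\nabla^{1,0},K]$, Lemma \ref{lem:connK} states exactly that its mean curvature vanishes once $h_{T^*}$, $h_E$, $h_F$ are Einstein-Hermitian, so this summand drops out. For $[K,K^*]$, I would write $K=K(\frac{\partial}{\partial\bar y_j})\,d\bar y_j$ and $K^*=K^*(\frac{\partial}{\partial y_i})\,dy_i$ and expand the graded commutator of these two odd operators as an endomorphism-valued $(1,1)$-form, tracking the sign from $d\bar y_j\wedge dy_i=-dy_i\wedge d\bar y_j$; contraction with $H_{ij}$ then yields $K(\frac{\partial}{\partial\bar y_j})K^*(\frac{\partial}{\partial y_i})H_{ij}-K^*(\frac{\partial}{\partial y_i})K(\frac{\partial}{\partial\bar y_j})H_{ij}$, the last term of the theorem.

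The main obstacle is the summand $[\bpartial',K^*]$, which must be shown to have vanishing mean curvature. The cleanest route is an adjointness argument. Since $K^*$ is by definition the fibre-metric dual of $K$, and since $\tilde\nabla^{1,0}$ and $\bpartial'$ are the two halves of the single Chern connection $\tilde\nabla$ (so that $\partial(a,b)=(\tilde\nabla^{1,0}a,b)+(a,\bpartial'b)$ makes $\tilde\nabla^{1,0}$ adjoint to $\bpartial'$), taking the fibre adjoint of each odd factor converts $[\tilde\nabla^{1,0},K]$ into $[\bpartial',K^*]$. Because the mean-curvature contraction is performed against the Hermitian form $H_{ij}$ (with $\overline{H_{ij}}=H_{ji}$), this contraction intertwines the adjoint of the operators with the fibre adjoint of endomorphisms, so the mean curvature of $[\bpartial',K^*]$ is, up to the fibre adjoint, the mean curvature of $[\tilde\nabla^{1,0},K]$ and hence is again zero by Lemma \ref{lem:connK}. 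If one prefers to avoid the abstract adjoint, one can instead compute $K^*$ explicitly as the dual of the formula in Lemma \ref{lem:Dbar} and repeat the contraction from the proof of Lemma \ref{lem:connK}, where the vanishing is once more forced by $M^{T^*}_n=M^E_n=M^F_n=0$ for $n\geq 3$, at the cost of more bookkeeping.

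Assembling the four contributions gives the first displayed formula. For the ``in particular'' statement, when the mean curvatures of $h_{T^*}$, $h_E$, $h_F$ all vanish, Lemma \ref{lem:mean} gives $M^{T^*}=M^E=M^F=0$; since $\tilde\Theta$ acts as a derivation by (\ref{eqn:curB1}), its mean curvature is a derivation vanishing on the generators $Y_i^{(l)}$, $E_i^{(l)}$, $F_i^{(l)}$, hence vanishes identically, leaving only the $[K,K^*]$ term.
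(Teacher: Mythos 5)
Your proposal is correct and follows essentially the same route as the paper: it contracts the decomposition $\Theta=\tilde\Theta+[K,\tilde\nabla^{1,0}]-[\bpartial',K^*]-[K,K^*]$ from (\ref{eqn:theta}) with $H_{ij}$, kills the $[K,\tilde\nabla^{1,0}]$ term by Lemma \ref{lem:connK}, kills the $[\bpartial',K^*]$ term as its conjugate/adjoint, and reads off the remaining terms. The paper's proof is just a terser version of the same argument, including the appeal to (\ref{eqn:curB1}) for the ``in particular'' clause.
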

\begin{proof}
By Lemma \ref{lem:connK}, $\sum [K,\tilde\nabla^{1,0}](\frac{\partial}{\partial y_i}, \frac{\partial}{\partial \bar y_j})H_{ij}=0$, $  \sum [\bpartial', K^*](\frac{\partial}{\partial y_i}, \frac{\partial}{\partial \bar y_j})H_{ij}$ also vanishes since it is the conjugacy of $\sum [K,\tilde\nabla^{1,0}](\frac{\partial}{\partial y_i}, \frac{\partial}{\partial \bar y_j})H_{ij}$. This proves the first equation by (\ref{eqn:theta}). By (\ref{eqn:curB1}), $\sum \tilde \Theta(\frac{\partial}{\partial y_i}, \frac{\partial}{\partial \bar y_j})H_{ij}$ vanishes if the mean curvatures of  $h_{T^*}$, $h_E$ and $h_F$ vanish. The second equation follows.
\end{proof}

\section{Holomorphic sections}
In this section, we assume $X$ is a compact Hermitian manifold. We calculate the space of holomorphic sections of $A_m(E,F)$ if the mean curvatures of  $h_{T^*}$, $h_E$ and $h_F$ are zero.
\begin{lemma}\label{lem:hol}Let $E$ be a holomorphic vector bundle  with a Hermitian metric $h=(-,-)$ and its Chern connection is  $\nabla=\nabla^{1,0}+\bpartial$.
 Assume $E$ has another holomorphic holomorphic structure determined by $\bpartial'=\bpartial-K$ with its Chern connection $\tilde \nabla=\tilde \nabla^{1,0}+\bpartial'$, such that under this holomorphic structure,
 \begin{enumerate}
 \item \label{enu:Hitem1}$E=\bigoplus_{k\in \mathbb Z}E^k$, $E^k$ are holomorphic vector bundles;
 \item\label{enu:Hitem2} $E^k$ is perpendicular to $E^l$ for $k\neq l$;
 \item \label{enu:Hitem3}the mean curvature of $\tilde \nabla$ is zero.
 \item \label{enu:Hitem4}$K$ maps $\Omega^{0,0}(E_l)$ to $\Omega^{0,1}(\oplus_{k=l+1}^{\infty}E_k)$.
 \item \label{enu:Hitem5}$\sum [\tilde\nabla^{1,0},K](\frac{\partial}{\partial y_i}, \frac{\partial}{\partial \bar y_j})H_{ij}=0.$
 \end{enumerate}
Then $a\in \Omega_X^{0,0}(E)$ is holomorphic if and only if $\tilde \nabla a=0$ and $Ka=0$.
\end{lemma}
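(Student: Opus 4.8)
\emph{Plan.} The statement is an equivalence, and the backward implication is trivial, so I would begin with it. If $\tilde\nabla a=0$ and $Ka=0$, then the $(0,1)$-component of $\tilde\nabla a=\tilde\nabla^{1,0}a+\bpartial'a=0$ gives $\bpartial'a=0$, and since $\bpartial=\bpartial'+K$ we get $\bpartial a=\bpartial'a+Ka=0$, so $a$ is holomorphic. For the forward implication, assume $\bpartial a=0$. The relation $\bpartial=\bpartial'+K$ gives at once the basic identity $\bpartial'a=-Ka$; thus $a$ is \emph{not} a priori $\bpartial'$-holomorphic, and the whole task is to show the ``error term'' $Ka$ vanishes. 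The idea is to extract two energy identities on the compact manifold $X$ — one from the Weitzenb\"ock formula for $\tilde\nabla$, one from the Weitzenb\"ock formula for $\nabla$ — and to play them against the connection relation $\nabla^{1,0}=\tilde\nabla^{1,0}-K^*$ from Lemma \ref{lem:connection}.

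I would first apply (\ref{eqn:weitz1}) to the Chern connection $\tilde\nabla$ of the structure $\bpartial'$. Since $a$ is a section, $\Delta_{\bpartial'}a=\bpartial'^*\bpartial'a$, and as the mean curvature of $\tilde\nabla$ vanishes by hypothesis (3), the formula becomes $\bpartial'^*\bpartial'a=\tilde\nabla^*\tilde\nabla a$. Pairing with $a$ and integrating by parts over $X$ yields $\|\bpartial'a\|^2=\|\tilde\nabla a\|^2$. Decomposing $\tilde\nabla a$ into types gives $\|\tilde\nabla a\|^2=\|\tilde\nabla^{1,0}a\|^2+\|\bpartial'a\|^2$, so the two identities force $\tilde\nabla^{1,0}a=0$.

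Next I would apply (\ref{eqn:weitz1}) to the Chern connection $\nabla$ of $\bpartial$. Now $\bpartial a=0$ makes $\Delta_\bpartial a=0$, hence $\nabla^*\nabla a=Ma$ with $M$ the mean curvature of $\nabla$. Repeating the computation of Theorem \ref{thm:curv} — legitimate in this abstract setting because hypotheses (3) and (5) annihilate the $\tilde\Theta$, $[K,\tilde\nabla^{1,0}]$ and $[\bpartial',K^*]$ contributions to the mean curvature in (\ref{eqn:theta}) — gives $M=-[K,K^*](\frac{\partial}{\partial y_i},\frac{\partial}{\partial \bar y_j})H_{ij}$; contracting and using that $K^*$ is the metric adjoint of $K$ produces $\int_X(Ma,a)=\|K^*a\|^2-\|Ka\|^2$. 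Since $\bpartial a=0$, the integrated identity reads $\|\nabla^{1,0}a\|^2=\|K^*a\|^2-\|Ka\|^2$. Finally, by Lemma \ref{lem:connection} together with the first step, $\nabla^{1,0}a=\tilde\nabla^{1,0}a-K^*a=-K^*a$, so $\|\nabla^{1,0}a\|^2=\|K^*a\|^2$; comparing the two expressions forces $\|Ka\|^2=0$. Thus $Ka=0$, whence $\bpartial'a=-Ka=0$, and combined with $\tilde\nabla^{1,0}a=0$ this gives $\tilde\nabla a=0$, as required.

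The main obstacle I anticipate is bookkeeping the signs in $\int_X(Ma,a)=\|K^*a\|^2-\|Ka\|^2$: one must match the contraction $\sum_{ij}H_{ij}(\,\cdot\,,\,\cdot\,)$ against the pointwise norms of the $(1,0)$-form $K^*a$ and the $(0,1)$-form $Ka$, using $H_{ij}=(dy_i,dy_j)$ and $(d\bar y_i,d\bar y_j)=\overline{H_{ij}}$. The conceptual point, easy to miss, is that a single Weitzenb\"ock formula is not enough: the formula for $\tilde\nabla$ only delivers $\tilde\nabla^{1,0}a=0$, and it is the formula for $\nabla$, combined with the relation $\nabla^{1,0}a=-K^*a$, that finally pins down $Ka=0$.
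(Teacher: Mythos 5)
Your backward implication is fine (and cleaner than the paper's), and your computation of the mean curvature of $\nabla$ and of $\int_X(Ma,a)=\|K^*a\|^2-\|Ka\|^2$ matches what the paper does. But the forward direction has a genuine gap at the very first step. You claim that the Weitzenb\"ock formula for $\tilde\nabla$, together with condition (3), yields $\|\bpartial'a\|^2=\|\tilde\nabla^{1,0}a\|^2+\|\bpartial'a\|^2$ and hence $\tilde\nabla^{1,0}a=0$. Notice that this step never uses $\bpartial a=0$: it would prove that \emph{every} smooth section of a bundle with vanishing mean curvature is $\tilde\nabla^{1,0}$-parallel, which is absurd (take the trivial flat line bundle on a torus and any non-constant smooth function). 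The correct Bochner--Kodaira identity for sections gives only the energy identity $\|\bpartial'a\|^2=\|\tilde\nabla^{1,0}a\|^2$; since $\bpartial'a=-Ka$ is exactly what you are trying to show vanishes, nothing follows. Without $\tilde\nabla^{1,0}a=0$ your second identity becomes $\|\tilde\nabla^{1,0}a-K^*a\|^2=\|K^*a\|^2-\|Ka\|^2$, which contains the uncontrolled cross term $\mathrm{Re}\int_X(\tilde\nabla^{1,0}a,K^*a)$, and the argument stalls.

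A structural warning sign is that your proof never invokes hypotheses (1), (2), (4), yet the lemma is false without them: on a flat torus take $E=\cO$ with $\bpartial'=\bpartial-K$ for $K$ a nonzero constant $(0,1)$-form; conditions (3) and (5) hold, the constant section $1$ is $\bpartial$-holomorphic, but $K\cdot 1\neq 0$. The grading is therefore essential, and it is precisely what the paper uses to kill the cross term: it replaces $h$ by the family $h^\lambda=\sum_k\lambda^k(-,-)|_{E^k}$ (under which $\tilde\nabla$ is unchanged but $K^*$ becomes $K^*_\lambda=\sum_{k<l}\lambda^{l-k}K^*_{k,l}$), so that the energy identity becomes a polynomial in $\lambda$ whose lowest-order coefficient is $\int_X(\tilde\nabla^{1,0}a_l,\tilde\nabla^{1,0}a_l)$ for the lowest nonzero graded piece $a_l$; positivity of the remaining terms forces this coefficient to vanish, and one inducts up the grading to get $\tilde\nabla^{1,0}a=0$, $Ka=0$. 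You would need to incorporate this deformation (or some other mechanism exploiting (1), (2), (4)) to close the gap.
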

\begin{proof}
By condition (\ref{enu:Hitem1}), any smooth section $a$ of $E$ can be uniquely written as a finite sum of smooth sections $a_k$ of $E^k$, that is  $a=\sum_k a_k$ where only a finite number of $a_k$  not zero. By condition (\ref{enu:Hitem2}), any smooth sections $a$ and $b$ of $E$,
$(a,b)=\sum_k (a_k,b_k)$. For any $\lambda\geq 0$, let $h^{\lambda}=(-,-)^{\lambda}$ be the Hermitian metric of $E$ given by
$$(a,b)^\lambda=\sum_k\lambda^k(a_k,b_k).$$
Under this Hermitian metric, Let $\nabla^{\lambda}$ and $\tilde \nabla^\lambda$ are the Chern connections corresponding to $\bpartial$ and $\bpartial'$ respectively. Since on $E^k$, $h^\lambda$ is a rescale of $h$, $\tilde \nabla^\lambda=\tilde \nabla$.

Let $K_{k,l}\in \Omega^{0,1}(\Hom(E, E))$ with $K_{k,l} a=(Ka_k)_l$. By condition (\ref{enu:Hitem4}),  $K=\sum_{k<l}K_{k,l}$. Let $K_{k,l}^*\in \Omega^{1,0}(\Hom(E, E))$ be the dual of $K_{k,l}$, so
$$(K_{k,l}(\frac{\partial}{\partial \bar y_i})a, b)^\lambda=\lambda^l(K_{k,l}(\frac{\partial}{\partial \bar y_i})a_k, b_l)=\lambda^l(a_k, K_{k,l}^*(\frac{\partial}{\partial  y_i})b_l)=\lambda^{l-k}(a, K_{k,l}^*(\frac{\partial}{\partial  y_i})b)^{\lambda}.$$
Let $$K^*_\lambda=\sum_{k<l}\lambda^{l-k}K_{k,l}^*,$$ we have $(K(\frac{\partial}{\partial \bar y_i})a, b)^\lambda=(a, K_{\lambda}^*(\frac{\partial}{\partial  y_i})b)^{\lambda}$.
By Lemma \ref{lem:connection}, $\nabla^\lambda=\tilde \nabla+K-K^*_\lambda$,
so its curvature is
\begin{eqnarray}
\Theta^\lambda=[\bpartial, \nabla]=[\bpartial'+K,\tilde\nabla^{1,0}-K^*_\lambda]=[\bpartial',\tilde\nabla]+[K,\tilde\nabla^{1,0}]-[\bpartial', K^*_\lambda]-[K,K^*_\lambda].\nonumber
\end{eqnarray}
By conditions (\ref{enu:Hitem3}) and (\ref{enu:Hitem5}) and the proof of Theorem \ref{thm:curv}, the mean curvature of $\nabla^\lambda$ is $$M=\sum \Theta^\lambda(\frac{\partial}{\partial y_i}, \frac{\partial}{\partial \bar y_j})H_{ij}=\sum K(\frac{\partial}{\partial \bar y_j})K^*_\lambda(\frac{\partial}{\partial y_i})H_{ij}-\sum K^*_\lambda(\frac{\partial}{\partial y_i})K(\frac{\partial}{\partial \bar y_j})H_{ij}.$$

Apply the Weitzenb\"ock formula \ref{eqn:weitz1} to a smooth section $a\in \Omega^{0,0}_X(E)$. We have
\begin{eqnarray*}\int_X(\Delta_{\bpartial}a^\lambda, a)^\lambda&=&\int_X(\nabla^{\lambda*}\nabla^\lambda a -M a,a)^\lambda\\
                 &=&\int_X(\nabla^\lambda a,\nabla^{\lambda}a)^\lambda-( K^*_\lambda a, K^*_\lambda a)^\lambda+ (K a,K a)^{\lambda}\\
                   &=&\int_X (\nabla^{\lambda 1,0} a,\nabla^{\lambda 1,0} a)^\lambda +(\bpartial a,\bpartial a)^\lambda
                                    -({K}^*_\lambda a,{K}^*_{\lambda}a)^{\lambda}
                        + (K a,K a)^{\lambda} .
\end{eqnarray*}

 Now  \begin{eqnarray}\label{equ:ka}\int_X(\bpartial a,\bpartial a)^\lambda\geq 0,\quad
                        \int_X(K a,K a)^{\lambda}\geq 0.
                        \end{eqnarray}

 Let
  $$P(\lambda)=\int_X (\nabla^{\lambda 1,0} a,\nabla^{\lambda 1,0} a)^\lambda -({K}^*_\lambda a,{K}^*_{\lambda}a)^{\lambda}. $$

  Assume $a_k=0$ for $k<l$ or $k>m$,   $a_l\neq 0$ and $a_m\neq 0$.
  \begin{eqnarray*}& &(\nabla^{\lambda 1,0} a,\nabla^{\lambda 1,0} a)^\lambda -({K}^*_\lambda a,{K}^*_{\lambda}a)^{\lambda}\\
&=&((\tilde \nabla^{1,0}-K^*_{\lambda})a,(\tilde\nabla^{1,0}-K^*_{\lambda})a)^{\lambda}-(K_\lambda^* a,K^*_{\lambda}a)^{\lambda}\\
 &=&(\tilde \nabla^{1,0}a,\tilde\nabla^{1,0}a)^{\lambda}-(\tilde \nabla^{1,0}a,K^*_{\lambda} a)^{\lambda}-(K^*_{\lambda} a,\tilde\nabla^{1,0}a)^{\lambda}.\\
 &=&\sum_{k=l}^m\lambda^l(\tilde \nabla^{1,0} a_l,\tilde \nabla^{1,0}a_l)
 -\sum_{m\geq k'>k\geq l}\lambda^{k'}(\tilde \nabla^{1,0}a_k,K^*_{k,k'}a_{k'})
 -\sum_{m\geq k'>k\geq l}(K^*_{k,k'}a_{k'},\tilde\nabla^{1,0}a_{k}).
 \end{eqnarray*}

 $\lambda^{-l}P(\lambda)$ is a polynomial of $\lambda$ with constant term
 $$\int_X(\tilde \nabla^{1,0}a_l,\tilde \nabla^{1,0}a_l)\geq 0.$$
 If $a$ is a holomorphic section, we have $\int_X(\Delta^\lambda_{\bpartial}a, a)^{\lambda}=0$ for any $\lambda>0$. By (\ref{equ:ka}), $P(\lambda)\leq 0$.
We must have
 $$\int_X(\tilde \nabla^{1,0}a_l,\tilde \nabla^{1,0}a_l)=0.$$
 So
  $$\tilde \nabla^{1,0}a_l=0.$$
  $K^*_{\lambda} a_l\in \Omega^{1,0}_X(\bigoplus_{k<l}E^k)$, which is perpendicular to $\tilde \nabla^{1,0} a_k$, $k\geq l$. Thus
  $$P(\lambda)=\int_X(\sum_{k=l+1}^m\lambda^l(\tilde \nabla^{1,0} a_l,\tilde \nabla^{1,0}a_l)
 -\sum_{m\geq k'>k\geq l+1}\lambda^{k'}(\tilde \nabla^{1,0}a_k,K^*_{k,k'}a_{k'})
 -\sum_{m\geq k'>k\geq l+1}(K^*_{k,k'}a_{k'},\tilde\nabla^{1,0}a_{k})).$$
Induction on $l$, we can show that $\tilde\nabla^{1,0}a=0$ and $P(\lambda)=0$.

  Thus to keep $\int_X(\Delta_{\bpartial}a, a)^{\lambda}=0$, the left side of the inequality (\ref{equ:ka}) is zero. So
  $$Ka=0, \quad \text{and }\,\bpartial a=0.$$
$\bpartial' a=\bpartial a-Ka=0$ and $\tilde \nabla a=\tilde \nabla^{1,0}a+\bpartial'a=0$.

On the other hand, suppose $\tilde \nabla a=0$ and $Ka=0$. Since the mean curvature of $\tilde \nabla$ is zero, by the Weitzenb\"ock formula (\ref{eqn:weitz1}), $\bpartial' a=0$.
Therefore $\bpartial a=\bpartial'a + K a=0$, so that $a$ is a holomorphic section of $E$.
\end{proof}

Let
\begin{eqnarray}\label{eqn:En}E^n=\Phi_m^{-1}
(\bigoplus_{j+k+l=n}\Sym^{j}(\oplus_{1\leq l\leq m} T^*)\otimes \Sym^k(\oplus_{0\leq l\leq m} E^*)\otimes\wedge^l(\oplus_{0\leq l\leq m} F^*)).
\end{eqnarray}
$A_m(E,F)=\oplus_{k=0}^\infty E^k$.
Let $A^*_m(E,F)[i,j,k]$ be the dual vector bundle of $A^*_m(E,F)[i,j,k]$. Let
$$A^*_m(E,F)=\oplus_{i,j,k}A^*_m(E,F)[i,j,k].$$
 We have the following theorem:
  \begin{thm} \label{thm:GE}If $X$ is a compact Hermitian manifold and the mean curvatures of  $h_{T^*}$, $h_E$ and $h_F$ are zero, then a smooth section $a$ of $A_m(E,F)$ is holomorphic if and only if $\tilde \nabla a=0$ and $Ka=0$. A smooth section $a$ of $A_m(E,F)^*$ is holomorphic if and only if $\tilde \nabla a=0$ and $K^{\vee} a=0$.
  \end{thm}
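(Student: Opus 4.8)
The plan is to deduce both assertions from Lemma \ref{lem:hol}, applied to the underlying smooth bundle $A_m(E,F)$ with its two holomorphic structures $\bpartial$ (from $A_m$) and $\bpartial'=\bpartial-K$ (pulled back from $B_m(E,F)$ through $\Phi_m$), and then again to the dual bundle with $K$ replaced by $K^{\vee}$. The grading to use is the \emph{polynomial-degree} (number-of-generators) decomposition $A_m(E,F)=\oplus_{n\geq 0}E^n$ of (\ref{eqn:En})—not the weight grading $L$—and the main work is simply to check that each of the five hypotheses of Lemma \ref{lem:hol} has already been isolated as an earlier result.

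For the first statement I would verify the conditions in order. Condition (\ref{enu:Hitem1}): since $\Phi_m$ identifies $(A_m(E,F),\bpartial')$ with $B_m(E,F)$, and $\bpartial'$ acts as $\bar\partial$ on coefficient functions while annihilating every generator $Y_i^{(l)},E_i^{(l)},F_i^{(l)}$ by (\ref{eqn:bpp}), it preserves polynomial degree, so each $E^n$ is a $\bpartial'$-holomorphic subbundle. Condition (\ref{enu:Hitem2}) is immediate: the metric on $B_m(E,F)$ is the induced symmetric/exterior tensor metric, under which monomials of different total degree are orthogonal. Condition (\ref{enu:Hitem3}) follows from (\ref{eqn:curB1}), exactly as in the proof of Theorem \ref{thm:curv}: the mean curvature $\tilde\Theta(\frac{\partial}{\partial y_i},\frac{\partial}{\partial\bar y_j})H_{ij}$ is a derivation acting on generators through $M^{T^*},M^E,M^F$, which vanish, so it vanishes identically. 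Condition (\ref{enu:Hitem4}) is precisely the content of Lemma \ref{lem:Dbar}: there $K$ sends each single generator to a sum of products of at least two generators ($a\geq 2$), hence as a derivation raises the polynomial degree by at least one, so $K$ maps $\Omega^{0,0}(E^l)$ into $\Omega^{0,1}(\oplus_{k>l}E^k)$. Finally condition (\ref{enu:Hitem5}) is exactly Lemma \ref{lem:connK}, which applies because zero mean curvature makes $h_{T^*},h_E,h_F$ Einstein--Hermitian. With all five conditions in place, Lemma \ref{lem:hol} gives the first assertion.

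For the dual bundle $A_m(E,F)^*$ I would run the same argument with the grading reversed. Recall from Section 2 that the dual holomorphic structures differ by $K^{\vee}$, determined by $(K^{\vee}a^{\vee})(b)=-a^{\vee}(Kb)$. Dualizing the degree-raising property of $K$ shows that $K^{\vee}$ \emph{lowers} the degree $n$; hence grading $A_m(E,F)^*=\oplus_n (E^n)^*$ by $-n$ turns $K^{\vee}$ into the degree-raising operator that condition (\ref{enu:Hitem4}) demands. Conditions (\ref{enu:Hitem1})--(\ref{enu:Hitem3}) pass to the dual formally: duals of holomorphic subbundles are holomorphic, the dual metric keeps the pieces orthogonal, and the dual connection has mean curvature equal to the negative dual of that of $\tilde\nabla$, hence still zero. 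Condition (\ref{enu:Hitem5}) for $K^{\vee}$ follows by dualizing Lemma \ref{lem:connK}: contracting with $H_{ij}$ still annihilates $[\tilde\nabla^{1,0},K^{\vee}]$ because the relevant higher covariant derivatives $F^l_{\cdots,i}(\frac{\partial}{\partial\bar y_j})H_{ij}$ vanish by Lemma \ref{lem:mean}. Lemma \ref{lem:hol} then yields the second assertion.

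The routine parts are the passage to the dual and the orthogonality of the graded pieces. The one genuinely delicate point is matching the grading direction to the hypotheses of Lemma \ref{lem:hol}: condition (\ref{enu:Hitem4}) is a one-sided (strictly triangular) statement, so I must use the ascending polynomial-degree filtration for $A_m(E,F)$ and the descending one for its dual, and confirm that $K$, respectively $K^{\vee}$, is strictly triangular in the chosen direction. A secondary point worth stating carefully is that the decomposition $a=\sum_n a_n$ invoked in Lemma \ref{lem:hol} is finite, which holds because we work with the algebraic direct sum, so every smooth section has only finitely many nonzero graded components.
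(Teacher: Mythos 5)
Your proposal is correct and follows essentially the same route as the paper: the paper's proof likewise applies Lemma \ref{lem:hol} to the total polynomial-degree decomposition $A_m(E,F)=\oplus_{n\ge 0}E^n$ of (\ref{eqn:En}), citing (\ref{eqn:curB1}) for condition (\ref{enu:Hitem3}), Lemma \ref{lem:Dbar} for (\ref{enu:Hitem4}), Lemma \ref{lem:connK} for (\ref{enu:Hitem5}), and setting $E^{-k}=E^{k*}$ to reverse the grading for the dual statement. Your write-up is in fact more explicit than the paper's about why each hypothesis holds and about the triangularity direction of $K^{\vee}$.
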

\begin{proof}We use Lemma \ref{lem:hol} to prove the theorem.  $A_m(E,F)$ is a holomorphic bundle with the Hermitian metric $h$ and its Chern connection is $\nabla$. $A_m(E,F)$ has another holomorphic structure coming from $B_m(E,F)$ by $\Phi_m$. Its Chern connection is $\tilde \nabla$.
$E^k$ are holomorphic under the holomorphic structure from $B_m(E,F)$. $E^k$ is perpendicular to $E^l$ if $k\neq l$.
$A_m(E,F)=\oplus_{k=0}^\infty E^k$. Let $E^{-k}=E^{k*}$, then $A^*_m(E,F)=\oplus_{-\infty}^0 E^{k}$. So $A_m(E,F)$ and $A^*_m(E,F)$ satisfies condition (1) and (2).
Since the mean curvature of $h_{T*}$, $h_E$ and $h_F$ vanish, by (\ref{eqn:curB1}), $A^*_m(E,F)$ satisfies condition (3).   By Lemma \ref{lem:Dbar} and Lemma \ref{lem:connK}, $A^*_m(E,F)$ satisfies conditions (4) and (5). Since $E^{-k}$ is the dual of $E^k$, it is easy to see that $A^*_m(E,F)$ satisfies conditions (3), (4) and (5). So by Lemma \ref{lem:hol}, the theorem is true.

\end{proof}

When the Hermitian metric of $E,F$ and $T^*$ are flat, then $K=0$ and  a smooth section $b$ of $A_m(E,F)^*$ is holomorphic if and only if $\tilde \nabla a=0$. In particular
\begin{cor}\label{cor:torus} If $X$ is a flat K\"ahler torus, E and F are sums of some copies of tangent and cotangent bundles of $X$, then a smooth section $a$ of $A_m(E,F)$ is holomorphic if and only if $\tilde \nabla a=0$. So the space of holomorphic sections of $A_m(E,F)$ is isomorphic to its fibre by restriction.
\end{cor}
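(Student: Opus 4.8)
The plan is to deduce the corollary directly from Theorem \ref{thm:GE}, after checking that in the flat case the hypotheses hold and the operator $K$ vanishes identically. First I would fix a flat K\"ahler metric on the torus $X=\C^N/\Lambda$, under which $T$ and $T^*$ are holomorphically trivial with flat Chern connections. Since $E$ and $F$ are by hypothesis direct sums of copies of $T$ and $T^*$, the induced metrics $h_{T^*}$, $h_E$, $h_F$ have vanishing curvature, and in particular vanishing mean curvature. Thus Theorem \ref{thm:GE} applies: a smooth section $a$ of $A_m(E,F)$ is holomorphic if and only if $\tilde\nabla a=0$ and $Ka=0$.

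Next I would argue that flatness forces $K=0$, recovering the remark preceding the corollary. By the inductive definition $F_2=\Theta$, $F_{k+1}=\nabla^{1,0}F_k$ of Section 2.1, the vanishing of $\Theta^{T^*}$, $\Theta^E$, $\Theta^F$ gives $F_n^{T^*}=F_n^E=F_n^F=0$ for all $n\geq 2$. By the explicit formula of Lemma \ref{lem:Dbar}, each term of $KP_l^{(k)}$ (for $P_l^{(k)}$ equal to $Y_l^{(k+1)}$, $E_l^{(k)}$, or $F_l^{(k)}$) carries a factor $F^l_{i_1,\cdots,i_a}$; hence $K$ annihilates every generator. Since $K$ is a derivation that also kills smooth functions, it vanishes on all of $\Omega^{0,0}(A_m(E,F))$, so $K=0$. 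The condition $Ka=0$ is therefore automatic, and $a$ is holomorphic if and only if $\tilde\nabla a=0$.

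Finally I would identify the $\tilde\nabla$-parallel sections with the fibre. By (\ref{eqn:curB1}) the curvature $\tilde\Theta$ is assembled from $\Theta^{T^*}$, $\Theta^E$, $\Theta^F$, so $\tilde\Theta=0$ and $\tilde\nabla$ is a flat connection on $A_m(E,F)$. The flat torus has trivial holonomy, so parallel transport is path-independent; the restriction map sending a $\tilde\nabla$-parallel section to its value at a fixed basepoint $x_0\in X$ is then injective (a parallel section on the connected manifold $X$ is determined by a single value) and surjective (each fibre vector extends to a global parallel section). This yields the asserted isomorphism between the space of holomorphic sections of $A_m(E,F)$ and its fibre. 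The reductions via Theorem \ref{thm:GE} and Lemma \ref{lem:Dbar} are direct specializations of the preceding machinery; the step that genuinely uses the torus hypothesis, and which I expect to be the crux, is this last identification of parallel sections with the fibre through triviality of the holonomy.
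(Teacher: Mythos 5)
Your argument is correct and matches the paper's (essentially one-line) justification: the paper deduces the corollary from the remark that flat metrics force $K=0$ via Lemma \ref{lem:Dbar}, so that Theorem \ref{thm:GE} reduces holomorphicity to $\tilde\nabla a=0$, and then identifies parallel sections with the fibre using the triviality of the holonomy of the flat torus. You have simply filled in the details of exactly this route, so nothing further is needed.
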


\subsection{$\mathfrak g[t]$ invariants.}
Let $\mathfrak g$ be a Lie algebra. Let
$$\mathfrak g[t]=\bigoplus_{ n\geq 0} \mathfrak gt^n$$
 be the Lie algebra given by
$$[g_it^i, g_jt^j]=[g_i,g_j]t^{i+j}, \text{for } g_i, \, g_j\in \mathfrak g.$$
Let $\mathfrak g_m=\mathfrak g[t]/(t^{m+1})$.
The following lemma is obvious.
\begin{lemma} if $\mathfrak g$ is simple, then $\mathfrak g_m$ is generated by $\mathfrak g$ and an element $K$ of $\mathfrak gt$. So if $R$ is a representation of $\gg_m$, then the invariant subspace of $R$ is
$$R^{\gg_m} =\{ r\in R^{\gg}|Kr=0\}.$$
\end{lemma}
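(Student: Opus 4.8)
The plan is to establish the generation statement first and then read off the description of the invariants, since the second assertion follows formally from the first once one knows that the annihilator of a vector is a subalgebra.

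For the generation claim, fix a nonzero element $K = K_0 t$ with $K_0 \in \mathfrak g$; I will show that $\mathfrak g t^0$ together with $K$ generate all of $\mathfrak g_m$, so in particular any nonzero $K \in \mathfrak g t$ works. The crucial input is that a simple Lie algebra is irreducible under its own adjoint action: the $\mathrm{ad}(\mathfrak g)$-submodules of $\mathfrak g$ are precisely its ideals, and simplicity leaves only $0$ and $\mathfrak g$. Hence the submodule generated by the nonzero vector $K_0$ is all of $\mathfrak g$. Translating this into $\mathfrak g_m$: bracketing $K = K_0 t$ repeatedly with elements of $\mathfrak g t^0$ produces $[g, K_0]\, t$, $[g', [g, K_0]]\, t$, and so on, and these brackets stay in degree one; their span is exactly $(\mathrm{ad}(\mathfrak g)\text{-module generated by } K_0)\, t = \mathfrak g t$. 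Thus the whole degree-one piece lies in the subalgebra generated by $\mathfrak g t^0$ and $K$. To reach the higher degrees I would climb the grading using $[\mathfrak g, \mathfrak g] = \mathfrak g$ (again from simplicity): since $[\mathfrak g t^i, \mathfrak g t^j] = [\mathfrak g, \mathfrak g]\, t^{i+j} = \mathfrak g\, t^{i+j}$ whenever $i+j \le m$, we get $\mathfrak g t^2 = [\mathfrak g t, \mathfrak g t]$ and, inductively, $\mathfrak g t^{k+1} = [\mathfrak g t, \mathfrak g t^k]$ for all $k+1 \le m$. This exhausts every graded piece and hence all of $\mathfrak g_m$.

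For the invariants, fix $r \in R$ and set $\mathfrak z_r = \{x \in \mathfrak g_m : x \cdot r = 0\}$. Because $R$ is a representation, $\mathfrak z_r$ is a Lie subalgebra: if $x \cdot r = y \cdot r = 0$, then $[x, y] \cdot r = x(y \cdot r) - y(x \cdot r) = 0$. Consequently a vector annihilated by a generating set of $\mathfrak g_m$ is annihilated by all of $\mathfrak g_m$. Applying this to the generating set $\mathfrak g t^0 \cup \{K\}$ from the first part shows that $r \in R^{\mathfrak g_m}$ if and only if $\mathfrak g \cdot r = 0$ and $K r = 0$, i.e. if and only if $r \in R^{\mathfrak g}$ with $K r = 0$, which is the asserted equality.

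The only step with real content is the degree-one reduction, namely that a single nonzero element of $\mathfrak g t$ suffices to fill out all of $\mathfrak g t$ under the adjoint action of $\mathfrak g t^0$; this is exactly the irreducibility of the adjoint representation of a simple Lie algebra. Everything else — the inductive climb through the grading and the subalgebra property of the annihilator — is purely formal, so I expect no further obstacle.
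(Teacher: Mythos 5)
Your proof is correct; the paper itself offers no argument (it declares the lemma obvious), and your reasoning -- irreducibility of the adjoint representation of a simple Lie algebra to fill out $\mathfrak g t$ from a single nonzero $K$, the identity $[\mathfrak g,\mathfrak g]=\mathfrak g$ to climb the grading, and the fact that the annihilator of a vector is a subalgebra -- is exactly the intended justification. Note only that your argument proves the stronger statement that \emph{any} nonzero element of $\mathfrak g t$ works, which is in fact what the paper needs later when it applies the lemma to specific curvature-derived elements.
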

If $\mathfrak g$ is the Lie algebra of a connected algebraic group $G$, then $\mathfrak g_m$ is the Lie algebra of the Lie group $J_m(G)$. $R^{\gg_m}=R^{J_m(G)}$.

Let $G=SU(N)$ be the special unitary group on $V=\mathbb C^N$
and  $\mathfrak g=\mathfrak{sl}(N,\mathbb C)$ be its complexified Lie algebra (or $G=Sp(N)$ be the symplectic group on $V=\mathbb C^{2N}$ and $\mathfrak g=\mathfrak{sp}(2N,\mathbb C)$ be its complexified Lie algebra). By the Weyl's dimension formula for the finite irreducible representation of $\mathfrak g$, we have the following lemma about the representation of $\mathfrak g=\mathfrak{sl}(N,\mathbb C)$ (or $\mathfrak{sp}(2N,\mathbb C)$):
\begin{lemma}\label{lem:sun}

Let $V$ be the fundamental representation of $\mathfrak g$. Regarding $\mathfrak g$  as a subspace of $V^*\otimes V$,
$V^*\otimes \mathfrak g\bigcap \Sym^2V^*\otimes V$ is an irreducible representation of $\mathfrak g$.
\end{lemma}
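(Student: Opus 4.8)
The plan is to realize $W:=(V^*\otimes\mathfrak g)\cap(\Sym^2 V^*\otimes V)$ as an explicit $\mathfrak g$-submodule of $V^*\otimes V^*\otimes V$ and then to pin down its isomorphism type in each of the two cases. Writing a tensor $T\in V^*\otimes V^*\otimes V$ in coordinates as $T^k_{ij}$ (the two lower indices for the $V^*$-slots, the upper index for $V$), membership in $\Sym^2 V^*\otimes V$ means $T^k_{ij}=T^k_{ji}$, while membership in $V^*\otimes\mathfrak g$ means that for each fixed $i$ the matrix $(T^k_{ij})_{j,k}$ lies in $\mathfrak g\subset\End V=V^*\otimes V$. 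As an intersection of two $\mathfrak g$-submodules, $W$ is itself a submodule, so it suffices to identify it; the tools in both cases are an explicit highest-weight vector together with the Weyl dimension formula.

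For $\mathfrak g=\mathfrak{sl}(N,\mathbb C)$ one has $\mathfrak g=\ker(\tr\colon V^*\otimes V\to\mathbb C)$, so the condition $T\in V^*\otimes\mathfrak g$ is the vanishing of the partial trace over the last two slots; on symmetric tensors this is the contraction $c\colon\Sym^2 V^*\otimes V\to V^*$, $T^k_{ij}\mapsto\sum_j T^j_{ij}$. Thus $W=\ker c$. The map $c$ is surjective (e.g.\ $T^k_{ij}=\tfrac1{N+1}(\delta^k_i\phi_j+\delta^k_j\phi_i)$ maps to $\phi$), so $\dim W=\dim(\Sym^2 V^*\otimes V)-\dim V^*=\tfrac12 N^2(N+1)-N=\tfrac12 N(N-1)(N+2)$. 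The vector $v^N\!\otimes v^N\!\otimes v_1$ is symmetric, is killed by $c$ (as $N\neq 1$), and is a highest-weight vector of weight $\omega_1+2\omega_{N-1}$, so $L(\omega_1+2\omega_{N-1})\subseteq W$. I would then evaluate the Weyl dimension formula at $\omega_1+2\omega_{N-1}$ and check it equals $\tfrac12 N(N-1)(N+2)$ (it gives $4$ for $N=2$ and $15$ for $N=3$, matching the count), whence $W=L(\omega_1+2\omega_{N-1})$ is irreducible.

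For $\mathfrak g=\mathfrak{sp}(2N,\mathbb C)$ the decisive observation is that the invariant symplectic form $\omega=(J_{kl})\in\wedge^2 V^*$ furnishes a $\mathfrak g$-equivariant isomorphism $V\to V^*$ (equivariance is exactly the statement that $\mathfrak{sp}$ preserves $\omega$). Lowering the upper index by setting $S_{ijk}:=J_{kl}T^l_{ij}$ therefore gives a $\mathfrak g$-equivariant isomorphism $V^*\otimes V^*\otimes V\to(V^*)^{\otimes 3}$. Under it the condition $T\in\Sym^2 V^*\otimes V$ becomes symmetry of $S$ in $i,j$, and the condition $(T^k_{ij})_{j,k}\in\mathfrak{sp}(2N)$ (i.e.\ $J\cdot(T^k_{ij})$ symmetric) becomes symmetry of $S$ in $j,k$; together these force $S$ to be totally symmetric, so $W\cong\Sym^3 V^*$ as $\mathfrak g$-modules. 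Every symmetric power of the standard $\mathfrak{sp}(2N)$-module is irreducible — there is no nonzero invariant contraction $\Sym^3 V^*\to\Sym^1 V^*$ since the only invariant $2$-tensor is the antisymmetric $\omega$, which annihilates symmetric tensors, and equivalently $\dim\Sym^3 V^*=\binom{2N+2}{3}$ equals the Weyl dimension of $L(3\omega_1)$ — so $W$ is irreducible.

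The routine parts are the two dimension counts and the Weyl-formula evaluations. The one genuinely structural step, and the place to be careful, is the $\mathfrak{sp}$ case: one must verify that lowering the $V$-index by $J$ is honestly $\mathfrak g$-equivariant and that the two symmetry conditions, once expressed through $J$, combine into \emph{full} symmetry of $S$ rather than some mixed-symmetry constraint. For $\mathfrak{sl}$ the only subtlety is confirming that $\ker c$ carries no highest-weight vector beyond the one of weight $\omega_1+2\omega_{N-1}$, which the dimension match settles.
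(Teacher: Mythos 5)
Your argument is correct, and it is essentially the paper's own approach: the paper disposes of this lemma with a one-line appeal to Weyl's dimension formula, and your proof simply carries that out in detail, exhibiting $W$ as $\ker c\cong L(\omega_1+2\omega_{N-1})$ for $\mathfrak{sl}(N,\mathbb C)$ and as $\Sym^3V^*\cong L(3\omega_1)$ for $\mathfrak{sp}(2N,\mathbb C)$ and matching dimensions. The identifications, the equivariance of index-lowering by the symplectic form, and the combination of the two partial symmetries into full symmetry of $S$ are all sound.
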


  Now we assume $X$ is an $N$ (or $2N$) dimensional compact K\"ahler manifold with the holonomy group $G=SU(N) $ (or $G=Sp(N)$). Then the mean curvature of $h_{T^*}$ is zero. Let $E$ and $F$ be  sums of some copies of  the holomorphic tangent and cotangent bundles of $X$. Then the mean curvatures of $h_E$ and $h_F$ are zeros. The holonomy group of $A_m(E,F)$ for $\tilde \nabla $ is $G$. By Theorem \ref{thm:GE}, $a$ is a holomorphic section of $A_m(E,F)$ if and only if $\tilde \nabla a=0$ and $Ka=0$. $\tilde \nabla a=0$  means that $a$ is a parallel section of $\tilde \nabla$ in $A_m(E,F)$.
  The space of the parallel sections of $\tilde \nabla$ in $A_m(E,F)$ is isomorphic to  $(A_m(E,F)|_x)^{G}$, for any point  $x\in X$. The isomorphism is given by the restriction.
  So the restriction $$ r_x: \Gamma(A_m(E,F))\to (A_m(E,F)|_x)^G$$ is injective.
 Let $R=A_m(E,F)|_x$ be the fibre of $A_m(E,F)$ at $x$.
 Let $\mathfrak g$ be the Lie algebra  $\mathfrak {sl}(N,\mathbb C))$ (or $\mathfrak{sp}(2N,\mathbb C$ ). The action of  $G$ on $R$ induces the  action  of $\mathfrak g$  on $R$, then we have the action of $\mathfrak g_{m}$  on $R$ given by
 $$gt^k(ab)=(gt^ka) b+a (gt^kb),\quad \text{for }  a, b \in R$$
   and if  $ P_i^{(j)}$  is $  Y_i^{(j+1)}$,$E_i^{(j)}$ or $ F_i^{(j)}$,
 $$gt^k (P_i^{(j)})=0, \text{ for } j<k$$
 $$gt^k (P_i^{(j)})=\frac{j!}{(j-k)!} g (P_i^{(j-k)}), \text { for } j\geq k .$$
 We  have the following theorem on the holomorphic sections of $A_m(E,F)$.

 \begin{thm}\label{thm:holo} Let $X$ be an $N$ (or $2N$) dimensional compact K\"ahler manifold with holonomy group $G=SU(N) $ (or $G=Sp(N)$). Let $E$ and $F$ be  sums of some copies of  the holomorphic tangent and cotangent bundles of $X$. Then the image of $r_x: \Gamma(A_m(E,F))\to (A_m(E,F)|_x)^{G}$ is $(A_m(E,F)|_x)^{\mathfrak g_m}$. So $r_x$ induces an isomorphism from $\Gamma(A_m(E,F))$ to $(A_m(E,F)|_x)^{\mathfrak g_m}$.
 \end{thm}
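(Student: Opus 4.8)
The plan is to feed the characterization of holomorphic sections from Theorem \ref{thm:GE} into the explicit formula for $K=\bpartial-\bpartial'$ in Lemma \ref{lem:Dbar}, and then to recognize the curvature data appearing there as the action of $\mathfrak g[t]$. Write $R=A_m(E,F)|_x$. The paragraph preceding the statement already gives that $r_x$ is injective, that its image lies in $R^G$, and that each $v\in R^G$ has a unique $\tilde\nabla$-parallel extension $a_v$; since Theorem \ref{thm:GE} says a section is holomorphic exactly when it is $\tilde\nabla$-parallel and $Ka=0$, the image of $r_x$ is precisely $\{\,v\in R^G:\ Ka_v=0\,\}$. On the other side, the lemma on $\mathfrak g_m$-invariants (valid since $\mathfrak{sl}(N,\mathbb C)$ and $\mathfrak{sp}(2N,\mathbb C)$ are simple) identifies $R^{\mathfrak g_m}$ with $\{\,v\in R^G:\ \xi v=0\,\}$ for one distinguished generator $\xi\in\mathfrak g t$; because the annihilator of a vector is a Lie subalgebra and $\mathfrak g$ together with $\xi$ generate $\mathfrak g_m$, this is the same as the set of $v\in R^G$ killed by all of $\mathfrak g t^{\ge 1}$. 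Hence the theorem reduces to the single equivalence, for $v\in R^G$:
$$Ka_v=0\quad\Longleftrightarrow\quad \mathfrak g t^{\ge 1}\cdot v=0.$$

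The bridge is that $K$ is literally built from the $\mathfrak g t^{\ge 1}$-action. By Lemma \ref{lem:Dbar}, $K$ acts on the generators $P_l^{(k)}$ as a sum over $a\ge 2$ of the covariant curvature tensors $F_a^{T^*},F_a^{E},F_a^{F}$ contracted against monomials in $Y^{(1)}$. Since $X$ has holonomy $G$ and $E,F$ are assembled from $T,T^*$, the holonomy reduction of $\tilde\nabla$ forces every $F_a$ to take values in the holonomy algebra $\mathfrak g\subset\End$ (its symmetric shape being recorded in Lemma \ref{lem:Rn}), and the way $F_a$ lowers the jet index is exactly the weight-lowering action of $\mathfrak g t^{a-1}$ on $R$. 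Thus $K|_y$ is a curvature-weighted combination of the operators coming from $\mathfrak g t^{\ge 1}$. This makes the implication $\Leftarrow$ immediate: the $\mathfrak g t^{\ge 1}$-action is $\tilde\nabla$-parallel (it is defined fibrewise through the $G$-structure and the canonical identifications of the jet copies), so if $\xi v=0$ for all $\xi\in\mathfrak g t^{\ge 1}$ then each $\xi a_v$ is a parallel section vanishing at $x$, hence $\xi a_v\equiv 0$; feeding this into the curvature expression for $K$ gives $Ka_v=0$, so $R^{\mathfrak g_m}\subseteq\operatorname{im} r_x$.

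For the reverse implication I would evaluate $Ka_v=0$ at $x$ and peel off the $\mathfrak g t$-content by the filtration on $K$ given by the number of $Y^{(1)}$-factors (equivalently the $O_{m,\bullet}$-filtration). The bottom piece is the $a=2$, $F_2=\Theta$ term, namely $\sum_s Y_s^{(1)}$ times the contraction of the curvature $\Theta(\partial_{y_s},\partial_{\bar y_j})\in\mathfrak g$ with the $\mathfrak g t$-action $\rho$ on the jet variables; vanishing of $Ka_v$ at $x$ in this lowest degree, for every antiholomorphic direction $\partial_{\bar y_j}$, yields the relations $\sum_s Y_s^{(1)}\,\rho\!\big(\Theta(\partial_{y_s},\partial_{\bar y_j})\,t\big)v=0$. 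By Ambrose--Singer the operators $\Theta(\partial_{y_s},\partial_{\bar y_j})$ span $\mathfrak g$, so the $t$-level is fully visible; and Lemma \ref{lem:sun}, that the holomorphic curvature module $\Sym^2V^*\otimes V\cap V^*\otimes\mathfrak g$ is irreducible, guarantees that the $\Sym^2$-shape of $\Theta$ contracts against the $Y^{(1)}$ nondegenerately, so these relations force $\xi v=0$ for the generator $\xi\in\mathfrak g t$. Climbing the filtration and using that the higher $F_a$ are the iterated covariant derivatives $\tilde\nabla^{1,0}$ of $\Theta$ --- which mirror the brackets that produce $\mathfrak g t^{a-1}$ out of $\mathfrak g$ and $\xi$ --- then upgrades this to $\mathfrak g t^{\ge 1}\cdot v=0$. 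Together with the first inclusion and the injectivity of $r_x$, this gives $\operatorname{im} r_x=R^{\mathfrak g_m}$ and the asserted isomorphism.

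The main obstacle is precisely this reverse implication: one must show that vanishing of the single operator $K$ at the point $x$ already forces annihilation by the whole of $\mathfrak g t^{\ge 1}$. This is where the irreducibility of Lemma \ref{lem:sun} is indispensable --- it is what prevents the curvature contraction from being degenerate (so that no $\mathfrak g t$-direction is invisible to $K$) and what keeps the inductive passage up the $Y^{(1)}$-filtration from collapsing once the higher curvature-derivative terms $F_a$ are taken into account.
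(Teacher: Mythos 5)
Your reduction of the theorem --- via Theorem \ref{thm:GE}, the injectivity of $r_x$ on parallel sections, and the simplicity of $\mathfrak g$ --- to the single equivalence ``$Ka_v=0$ iff $\mathfrak g t^{\geq 1}\cdot v=0$'' for $v\in R^G$ is exactly the paper's strategy, and your forward implication matches the paper's: equation (\ref{eqn:Kg}) writes $K$ at each point as a $Y$-monomial-weighted combination of the operators $\tilde F_n(\cdots)|_x\,t^{k-j_1}\in\mathfrak g_m$, and parallel transport propagates $\mathfrak g_m$-invariance to every fibre, so $Ka_v\equiv 0$.

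The reverse implication, however, has a genuine gap at precisely the step you identify as the main obstacle. The one-$Y$-factor (that is, $a=2$) part of $K(\partial/\partial\bar y_i)$ is not $\sum_s Y_s^{(1)}\rho\bigl(\Theta(\partial_{y_s},\partial_{\bar y_i})t\bigr)$ but the paper's operator $\tilde K_i=\sum_{j\geq 1}\sum_{s}\frac{Y_{s}^{(j)}}{j!}\,\tilde\Theta(\partial_{y_{s}},\partial_{\bar y_i})\,t^{j}$: \emph{all} the levels $\mathfrak g t,\dots,\mathfrak g t^{m}$ occur simultaneously in the bottom piece of your filtration, each weighted by the matching variable $Y^{(j)}$ so that every term has the same weight and the resulting monomials overlap (for instance $Y_2^{(1)}\cdot(gt\,Y_1^{(3)})$ and $Y_2^{(2)}\cdot(gt^2 Y_1^{(3)})$ can produce the same monomial $Y_2^{(1)}Y_2^{(2)}$). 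Hence the single relation you obtain at the bottom does not decouple into separate relations for $\mathfrak g t$, and ``climbing the filtration through the higher $F_a$'' cannot repair this, since the difficulty is already entirely contained in the $F_2=\Theta$ term. This is exactly what the paper's Lemma \ref{lem:cri} is for: after Lemma \ref{lem:sun} shows that the $\mathfrak g$-module generated by $\tilde K_{i_0}$ is the full irreducible $\Sym^2T_x^*\otimes T_x\cap T_x^*\otimes\mathfrak g$ and hence contains the two special derivations $K_1,K_2$ built from a pair $g_1,g_2\in\mathfrak g$ with $[g_2,g_1]=-2g_1$, the iterated commutators $\mathbb P_n=[\mathbb P_{n-1},K_2]$ collapse to $c\,(Y_1^{(1)})^L g_1t^L$ and isolate the top $t$-level, and irreducibility of $\mathfrak g t^L$ as a $\mathfrak g$-module finishes the descent. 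Your proposal contains no substitute for this mechanism. You would also need the paper's preliminary decomposition $a=\sum_k a_k$ into graded pieces, so that only $\tilde K_i$ applied to the lowest piece $a_l$ survives in lowest degree; and note that ``number of $Y^{(1)}$-factors'' is not the $O_{m,\bullet}$-filtration, which counts all positive-weight $y$-factors.
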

 \begin{proof}
If $a$ is a parallel section of $\tilde\nabla$ with $r_x(a)\in (A_m(E,F)|_x)^{\mathfrak g_m}$ for a point $x \in X$. For any point $x'\in X$, let's prove that $r_{x'}(a)\in (A_m(E,F)|_{x'})^{\mathfrak g_m}$. Let $\gamma$ be a path from $x$ to $x'$. Through the parallel  transformation along the path $\gamma$ under the connection $\tilde \nabla$, we get an isomorphism of $G$ representation from $A_m(E,F)|_x$ to $A_m(E,F)|_{x'}$. From the definition of  the action $\mathfrak g_m$, we actually get an isomorphism of $\mathfrak g_m$ representation. Now since $a$ is a parallel section, the isomorphism maps $a|_x$ to $a|_{x'}$. Thus $a|_x$ is $\mathfrak g_m$ invariant if and only if $a|_{x'}$ is $\mathfrak g_m$ invariant.

 The holonomy group of $A_m(E,F)$ for $\tilde \nabla $ is $G$. Through the action of $G$ on $A_m(E,F)|_x$, we can regard $\mathfrak g$ as a subspace of $ \End(A_m(E,F)|_x, (A_m(E,F)|_x)$. We have
 $\tilde \Theta(\frac{\partial}{\partial y_i},\frac{\partial}{\partial \bar y_j}) |_x\in \mathfrak g$.
Let $\tilde F_n=F_n^{A_m(E,F)}$ be the higher covariant derivative of $\tilde \Theta$. Then
 $$g_{i_2\cdots i_a,j}=\tilde F_n(\frac {\partial}{\partial  y_{i_2}},\cdots,\frac{\partial}{\partial  y_{i_{n}}},\frac {\partial}{\partial  \bar y_{j}})|_x\in\mathfrak g.$$
By the expression of $K$ in Lemma \ref{lem:Dbar},
\begin{eqnarray*}
K(\frac {\partial}{\partial  \bar y_{j}}) P_l^{(k)}|_x&=&\sum_{a=2}^k\sum_{j_1+\cdots+j_a=k}\sum_{i_2,\cdots,i_{a}}C_{j_1,\cdots,j_a} (\tilde F_n(\frac {\partial}{\partial  y_{i_2}},\cdots,\frac{\partial}{\partial  y_{i_{a}}},\frac {\partial}{\partial  \bar y_{j}})|_xP_{l}^{(k-j_1)})Y_{i_2}^{(j_2)}\cdots Y_{i_a}^{(j_a)}\\
&=&\sum_{a=2}^k\sum_{j_1+\cdots+j_a=k}\sum_{i_2,\cdots,i_{a}}\frac{1}{(a-1)!j_2!\cdots j_a!}( g_{i_2\cdots i_a,j}t^{k-j_1}P_{l}^{(k)})Y_{i_2}^{(j_2)}\cdots Y_{i_a}^{(j_a)}
\end{eqnarray*}
So
\begin{eqnarray}\label{eqn:Kg}K(\frac {\partial}{\partial \bar y_i})|_x=\sum_{a=2}^k\sum_{j_1+\cdots+j_a=k}\sum_{i_2,\cdots,i_{a}}\frac{1}{(a-1)!j_2!\cdots j_a!}Y_{i_2}^{(j_2)}\cdots Y_{i_a}^{(j_a)}g_{i_2\cdots i_a,j}t^{k-j_1}.
\end{eqnarray}
Since $a|_x$ is $\mathfrak g_m$ invariant for any $x\in X$, so $K(a)=0$. By Theorem \ref{thm:GE},  $a\in \Gamma(A_m(E,F))$.

On the other hand, if $a \in \Gamma(A_m(E,F))$, $a=\sum_{k=l}^n a_k$ with $a_k\in \Omega^{0,0}(E^k)$.
Let
$$\tilde  K_i=\sum_{j_2\geq 1,i_2}\frac{Y_{i_2}^{(j_2)}}{j_2!}\tilde \Theta(\frac{\partial}{\partial y_{i_2}}, \frac{\partial}{\partial \bar y_{i}})t^{j_2}.$$
Then
$$K(\frac {\partial}{\partial  \bar y_{i}})a|_x=\tilde K_i a_l|_x+b, \quad  b\in \oplus_{k>l}E^{k}|_x.$$
So $Ka=0$ implies $\tilde K_ia_l|_x=0$.

Since $\tilde \nabla $ preserves $E^k$, the action of the holomomy group $G$ on the fibre of $A_m(E,F)$ preserves the grading. So $a_l|_x\in  (E^l|_x)^{G}$ .
Since the holonomy group of $X$ is $G$, the curvature $R$ of $T$ is not equals to zero. There is some point $x\in X$ , and some $i_0$,  $R(\frac {\partial}{\partial  \bar y_{i_0}})|_p\neq 0$.

The action of $\mathfrak g$  on $K_{i_0}$ forms a representation $W$ of $\mathfrak g$. Every element of $W$ is a derivation of $\Omega^{0,0}(E_m)$ and it vanishes on $a_l|_p$ .

Now for any  $g\in \mathfrak g$.
\begin{eqnarray*}[g, \tilde K_{i_0}]&=&\sum_{j_2\geq 1,i_2}\big((g\frac{Y_{i_2}^{(j_2)}}{j_2!})(\tilde \Theta (\frac {\partial}{\partial  \bar y_{i_0}}, \frac {\partial}{\partial y_{i_2}})t^{j_2})+\frac{Y_{i_2}^{(j_2)}}{j_2!} ([g,\tilde \Theta(\frac {\partial}{\partial  \bar y_{i_0}}, \frac {\partial}{\partial y_{i_2}})]t^{j_2})\big)\\
&=& \sum_{j_2\geq 1,i_2}\big(-\frac{Y_{i_2}^{(j_2)}}{j_2!}(\tilde \Theta(\frac {\partial}{\partial  \bar y_{i_0}}, g\frac {\partial}{\partial y_{i_2}})t^{j_2})+\frac{Y_{i_2}^{(j_2)}}{j_2!} ([g,\tilde\Theta(\frac {\partial}{\partial  \bar y_{i_0}}, \frac {\partial}{\partial y_{i_2}})]t^{j_2})
\end{eqnarray*}
Thus the action of $\mathfrak g$  on $K_{i_0}$  is the same as the action of $\mathfrak g$ on $\tilde \Theta(\frac {\partial}{\partial  \bar y_{i_0}})$. Since $E$ and $F$ are direct sums of copies of holomorphic tangent and cotangent bundles. By (\ref{eqn:curB1}), the action of $\mathfrak g$ on $\tilde \Theta(\frac {\partial}{\partial  \bar y_{i_0}})$ is the same as  its action on
$R(\frac {\partial}{\partial  \bar y_{i_0}})\in \Sym^2 T_x^*\otimes T_x\bigcap T^*_x\otimes \mathfrak g$. By Lemma \ref{lem:sun}, $\Sym^2 T_x^*\otimes T_x\bigcap T^*_x\otimes \mathfrak g$ is an irreducible representation of $\mathfrak g$. So
$$W\cong \Sym^2 T_x^*\otimes T_x\bigcap T^*_x\otimes \mathfrak g.$$
Let $g_1, g_2\in \mathfrak g$, which are determined  by
$$g_1 (\frac {\partial}{\partial y_{i}})=\delta_{i}^1\frac{\partial}{\partial y_{s}},\quad g_2 (\frac {\partial}{\partial y_{i}})=\delta_{i}^1\frac{\partial}{\partial y_{1}}-\delta_{i}^s\frac{\partial}{\partial y_{s}}.$$
here $s=2$ if $g=\mathfrak{sl}(N,\mathbb C)$ and $ s=N+1$ if $g=\mathfrak{sp}(2N,\mathbb C)$.
We have $[g_2,g_1]=-2g_1$.
Then the two derivations
$$K_1=\sum_{j\geq 1}\frac{Y_{1}^{(j)}}{j!}g_1 t^{j},\quad K_2=-\sum_{j\geq 1}\frac{Y_{s}^{(j)}}{j!}g_1 t^{j}+\sum_{j\geq 1}\frac{Y_{1}^{(j)}}{j!}g_2 t^{j}$$
are in $W$. We have $K_1 a_l|_x=0$ and $K_2 a_l|_x=0$.
By Lemma \ref{lem:cri}, $a_l|_x$ is $g_m$ invariant. Then we have $K a_l=0$ . Now  $K (a-a_l)=0$. By induction on $l$,
we can show $a_x$  is $\mathfrak g_m$ invariant. This proves the theorem.

\end{proof}

\begin{lemma}\label{lem:cri} if $a\in A_m(E,F)|_x$ is $\mathfrak g$ invariant and $K_1a=0$, $K_2a=0$, then $a$ is $\mathfrak g_m$ invariant.
\end{lemma}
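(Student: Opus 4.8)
The plan is to deduce the full $\mathfrak g_m$-invariance of $a$ from the two scalar relations $K_1a=0$, $K_2a=0$ by reducing everything to the single statement $g_1t\cdot a=0$. Since $\mathfrak g$ is simple its adjoint representation is irreducible, so $[\mathfrak g,g_1]=\mathfrak g$ and hence $\mathfrak g t^0$ together with the one element $g_1t$ generate $\mathfrak g_m$ (this is the generation lemma preceding the statement). Now if a vector is killed by $X$ and by $Y$ it is killed by $[X,Y]=XY-YX$; iterating, a vector annihilated by $\mathfrak g t^0$ and by $g_1t$ is annihilated by the whole Lie algebra they generate, that is, by $\mathfrak g_m$. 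Thus it suffices to prove $g_1t\cdot a=0$, and I will in fact establish $u_j:=g_1t^j a=0$ for every $j\ge 1$. Note at the outset that $\mathfrak g$-invariance of $a$ together with $[g_2,g_1]=-2g_1$ and $[g_1,g_1]=0$ gives $g_1u_j=0$ and $g_2u_j=-2u_j$, so each $u_j$ is a lowest-weight vector of weight $-2$ for the $\mathfrak{sl}(2)$-triple spanned by $g_1$, $g_2$ and the opposite root vector $g_3$ (with $[g_3,g_1]=g_2$).

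Next I rewrite the hypotheses. By definition $K_1a=\sum_{j\ge1}\frac1{j!}Y_1^{(j)}u_j=0$. Because $a$ is $\mathfrak g$-invariant, for any $g\in\mathfrak g$ one has $g\cdot(K_1a)=(g\cdot K_1)a$, so \emph{every} $\mathfrak g$-translate of $K_1$ also annihilates $a$; the $\mathfrak g$-module generated by $K_1$ is the module $\Sym^2T^*\otimes T\cap T^*\otimes\mathfrak g$, which is irreducible by Lemma \ref{lem:sun} (and identified with $W$ in the proof of Theorem \ref{thm:holo}), so the whole of that module annihilates $a$. Concretely, applying the derivation $g_3$ to the relation $K_1a=0$ and using $g_3\cdot a=0$, $g_3\cdot Y_1^{(j)}=0$ and $g_3u_j=v_j:=g_2t^ja$ produces $\sum_{j\ge1}\frac1{j!}Y_1^{(j)}v_j=0$; subtracting the hypothesis $K_2a=0$ then yields the companion relation $\sum_{j\ge1}\frac1{j!}Y_s^{(j)}u_j=0$. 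Thus $a$ is contracted to zero against both coordinate families $Y_1^{(\bullet)}$ and $Y_s^{(\bullet)}$, and more generally against the whole curvature-type tensor module above.

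Finally I extract the individual vanishings by a descending induction on jet level inside a fixed weight. Since every operator here preserves the $L$-grading, decompose $a$ into $L$-homogeneous pieces and assume $a$ homogeneous of weight $w$, so that $u_j$ has weight $w-j$ and vanishes for $j>\min(w,m)$. When $w\le m$ the coefficient of $Y_1^{(w)}$ in $K_1a=0$ is clean—for weight reasons no lower term $\frac1{j!}Y_1^{(j)}u_j$ with $j<w$ can carry a factor of that variable—so it forces the top vanishing $u_w=0$. One then descends one jet level at a time, at each stage invoking the two relations of the previous paragraph (and their further $\mathfrak g$-translates) to cancel the contributions of the coordinate variables that are hidden \emph{inside} the lower $u_j$. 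I expect this cancellation to be the main obstacle: unlike the top level, an intermediate $u_j$ may itself contain the very variables $Y_1^{(k)},Y_s^{(k)}$ one is trying to read off, so passing from $\sum_j\frac1{j!}Y_p^{(j)}u_j=0$ to the statements $u_j=0$ is not a formal coefficient comparison, and the induction must be organized (together with the range $w>m$) so that at each level only already-killed $u_j$'s enter the contaminating terms. Once all $u_j=0$ is established, in particular $g_1t\cdot a=0$, and the reduction of the first paragraph completes the proof.
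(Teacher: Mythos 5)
Your proposal has a genuine gap, and it sits exactly where you flag it. The plan is to read off the vanishing of each $u_j=g_1t^ja$ from the polynomial identity $\sum_{j\ge1}\frac{1}{j!}Y_1^{(j)}u_j=0$, but as you yourself note, the lower $u_j$'s contain the variables $Y_1^{(k)}$, $Y_s^{(k)}$ you are trying to isolate, so only the top-level extraction is a clean coefficient comparison and the descending induction does not close. The auxiliary relation you propose to repair this, $\sum_j\frac{1}{j!}Y_s^{(j)}u_j=0$, does not actually follow: your computation uses $g_3\cdot Y_1^{(j)}=0$, whereas with $g_1(\partial_i)=\delta_i^1\partial_s$ and $[g_3,g_1]=g_2$ one has $g_3\cdot Y_1^{(j)}=-Y_s^{(j)}$, so that $[g_3,K_1]=-\sum_j\frac{Y_s^{(j)}}{j!}g_1t^j+\sum_j\frac{Y_1^{(j)}}{j!}g_2t^j=K_2$. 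Applying $g_3$ to $K_1a=0$ therefore only reproduces the hypothesis $K_2a=0$ and yields no new information; and the operator $\sum_j\frac{Y_s^{(j)}}{j!}g_1t^j$ by itself corresponds to $dy^s\otimes dy^1\otimes\partial_s$, which does not lie in $\Sym^2T^*\otimes T\cap T^*\otimes\mathfrak g$, so it cannot be obtained as a $\mathfrak g$-translate of $K_1$ either.

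The paper avoids the contamination problem entirely by a different device: it forms the iterated commutators $\mathbb P_1=K_1$, $\mathbb P_n=[\mathbb P_{n-1},K_2]$, which all annihilate $a$, and shows by induction that
$\mathbb P_n=\sum c_{j_1,\dots,j_n}Y_1^{(j_1)}\cdots Y_1^{(j_n)}g_1t^{j_1+\cdots+j_n}$ with all $c_{j_1,\dots,j_n}>0$. Letting $L$ be the largest level at which $\mathfrak gt^L$-invariance fails, every term of $\mathbb P_La$ with $j_1+\cdots+j_L>L$ vanishes by maximality of $L$, so $\mathbb P_La=c_{1,\dots,1}(Y_1^{(1)})^Lg_1t^La$, forcing $g_1t^La=0$ and hence, by irreducibility of $\mathfrak gt^L$ as a $\mathfrak g$-module, a contradiction. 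The commutator automatically isolates the single surviving term rather than requiring you to separate coefficients in a polynomial identity. Your first-paragraph reduction (it suffices to kill $g_1t\cdot a$, since $\mathfrak g$ and $g_1t$ generate $\mathfrak g_m$ and annihilators are closed under brackets) is correct and is essentially the same generation argument the paper uses at the final step; it is the middle of your argument that needs to be replaced by something like the $\mathbb P_n$ construction.
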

\begin{proof}
Let $\mathbb P_n$ be a derivation on $ A_m(E,F)|_x$  defined inductively by
$$\mathbb P_1=K_1,\quad \mathbb P_n=[\mathbb P_{n-1}, K_2].$$
We have $\mathbb P_n a=0$. Now
$$[\frac{Y_{1}^{(j)}}{j!}, K_2]=\frac 1 j\sum_{s=1}^{j-1}\frac{Y_{1}^{(j-s)}}{(j-s-1)!)}\frac{Y_{1}^{(s)}}{s!},$$
$$[g_1t^j, K_2]=2 \sum_{s=1}^{j-1}\frac{Y_{1}^{(s)}}{s!} g_1t^{j+s}.$$
By the above two equations, we can show inductively that
$$\mathbb P_n=\sum_{j_i\geq 1} c_{j_1,\cdots, j_a} Y_1^{(j_1)}\cdots Y_1^{(j_n)}g_1t^{j_1+\cdots+j_n}$$
with $c_{j_1,\cdots, j_n}>0$.
$a$ has finite weight, when $k$ large enough $gt^k a=0$ for any $g\in \mathfrak g$. Let $L$ be the largest number such that $a$ is not $\mathfrak gt^L a$ invariant. if $L\geq 1$,
$$\mathbb P_L a=c_{1,\cdots,1}(Y_1^{(1)})^Lg_1t^L a.$$
So $g_1t^L a=0$. Since $a$ is $\mathfrak g$ invariant and $\mathfrak gt^L$ is an irreducible representation of $\mathfrak g$, $a$ is $\mathfrak gt^L$ invariant. But we assume that  $a$ is not $\mathfrak gt^L$ invariant. So $L\leq 0$.
we conclude that $a$ is $\mathfrak g_m$ invariant.
\end{proof}
Now we can prove Theorem \ref{thm:TJ}.
\begin{proof}[Proof of theorem \ref{thm:TJ}]Let  $\mathfrak g=\mathfrak{sl}(N,\mathbb C)$.
 $TJ_m(X)$ is isomorphic to $ J_m(TX)$. The sheaf of sections of $A_m(T,0)[1,0]$ over $X$ is the sheaf of the push forward of the
sheaf of sections of  $J_m(TX)$ through $\pi_m: J_m(X)\to X$. Thus the space of holomorphic vector fields of $J_m(X)$ is isomorphic to $\Gamma(A_m(T,0)[1,0])$. By Theorem \ref{thm:holo},
$\Gamma(A_m(T,0))$ is isomorphic to $(A_m(T,0)|_x)^{\mathfrak g_m}$.
$$A_m(T,0)|_x\cong R(y,y^*,0)= \mathbb C[y_1^{(1)}, \cdots, y_i^{(j)},\cdots,y_N^{(m)},y_1^{*(0)}, \cdots, y_i^{*(j)},\cdots,y_N^{*(m)}],$$
 So $\Gamma(A_m(T,0)[1,0])$ is isomorphic to the subspace of $(A_m(T,0)|_x)^{\mathfrak g_m}$ with the degree of $y^*$ is one.
 By Theorem 4.3 of \cite{LSSI},  $(A_m(T,0)|_x)^{\mathfrak g_m}$ is generated by $ \mathbb C[y_1^{(1)}, \cdots, y_N^{(1)},y_1^{*(0)}, \cdots, y_N^{*(0)}]^{\mathfrak g}$, which is generated by
 $v=\sum y_i^{(1)}y_i^{*(0)}$. So
 $$\Gamma(A_m(T,0)[1,0])\cong \bigoplus_{k=0}^{m-1}\mathbb C \tilde D^kv,$$
 has dimension $m$.
\end{proof}

Theorem \ref{thm:holo} can be generalized to the case when $X$ is a compact Ricci-flat K\"ahler manifold.

 \begin{thm}\label{Thm:holoX} Let $X$ be  compact Ricci-flat K\"ahler manifold with holonomy group $G$. Let $\mathfrak g$ be the complexified Lie algebra of $G$. Let $E$ and $F$ be  sums of some copies of  the holomorphic tangent and cotangent bundles of $X$. Then the image of $r_x: \Gamma(A_m(E,F))\to (A_m(E,F)|_x)^{G}$ is $((A_m(E,F)|_x)^{\mathfrak g_m})^G$. So $r_x$ induces an isomorphism from $\Gamma(A_m(E,F))$ to $((A_m(E,F)|_x)^{\mathfrak g_m})^G$.
 \end{thm}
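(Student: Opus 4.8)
The plan is to follow the proof of Theorem \ref{thm:holo} as closely as possible, inserting the de Rham decomposition to reduce the holonomy analysis to the special-holonomy factors already handled there. First I note that the criterion of Theorem \ref{thm:GE} applies unchanged: since $X$ is Ricci-flat Kähler and $E,F$ are built from $T$ and $T^*$, the mean curvatures of $h_{T^*},h_E,h_F$ all vanish, so a smooth section $a$ of $A_m(E,F)$ is holomorphic if and only if $\tilde\nabla a=0$ and $Ka=0$. In particular every holomorphic section is $\tilde\nabla$-parallel, hence determined by its value at $x$; thus $r_x$ is injective and its image is contained in $(A_m(E,F)|_x)^{G}$.

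For the inclusion $\supseteq$, I would take $v\in((A_m(E,F)|_x)^{\mathfrak g_m})^{G}$. Being $G$-invariant, $v$ is the value at $x$ of a unique $\tilde\nabla$-parallel section $a$. Parallel transport along any path is an isomorphism of $\mathfrak g_m$-representations, because the $\mathfrak g_m$-action on the fibre is assembled from $\tilde\Theta$ and its covariant derivatives, which $\tilde\nabla$ carries to one another; hence $r_{x'}(a)$ is $\mathfrak g_m$-invariant at every $x'$. Formula (\ref{eqn:Kg}) then writes $K(\frac{\partial}{\partial\bar y_i})$ entirely in terms of the $\mathfrak g_m$-action, so $Ka=0$ and $a$ is holomorphic by Theorem \ref{thm:GE}.

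The substantive inclusion is $\subseteq$. Let $a=\sum_{k\geq l}a_k$ be holomorphic; then $a$ is parallel, so each $a_k|_x$ is $G$-invariant, and $Ka=0$ forces $\tilde K_i\,a_l|_x=0$ on the lowest piece exactly as in the proof of Theorem \ref{thm:holo}. The new input is that the restricted holonomy need not be a single $SU(N)$ or $Sp(N)$. To handle this I would invoke the de Rham splitting of Ricci-flat Kähler holonomy: the holonomy representation decomposes $T_x=\bigoplus_\alpha V_\alpha$ into parallel irreducible summands, the restricted holonomy is the product $\prod_\alpha G^0_\alpha$ with each $G^0_\alpha\in\{SU(n_\alpha),Sp(n_\alpha),\{1\}\}$, and the curvature $\tilde\Theta$ is block-diagonal, its $\alpha$-block taking values in $\mathfrak g_\alpha$ and vanishing on mixed pairs. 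Consequently $\mathfrak g=\bigoplus_\alpha\mathfrak g_\alpha$, $\mathfrak g_m=\bigoplus_\alpha(\mathfrak g_\alpha)_m$, and the operator $\tilde K_i$ attached to an index $i$ in the factor $\alpha$ involves only $\mathfrak g_\alpha$ and the $Y$-variables of that factor. The flat factors contribute $\mathfrak g_\alpha=0$ and impose no condition; for each nontrivial factor Lemma \ref{lem:sun} shows the curvature generates the irreducible $\Sym^2 V_\alpha^*\otimes V_\alpha\cap V_\alpha^*\otimes\mathfrak g_\alpha$, and Lemma \ref{lem:cri}, applied with the $\mathfrak{sl}_2$-triple $g_1,g_2$ chosen inside $\mathfrak g_\alpha$, promotes $\tilde K_i\,a_l|_x=0$ to $(\mathfrak g_\alpha)_m$-invariance of $a_l|_x$. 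Running this over all $\alpha$ yields $\mathfrak g_m$-invariance of $a_l|_x$, hence $Ka_l=0$; subtracting and inducting on $l$ gives that $r_x(a)$ is $\mathfrak g_m$-invariant. Together with its automatic $G$-invariance this places $r_x(a)$ in $((A_m(E,F)|_x)^{\mathfrak g_m})^{G}$.

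I expect the main obstacle to be this reverse inclusion, and specifically the bookkeeping of the de Rham splitting: one must check that the block-diagonal form of $\tilde\Theta$ makes each $\tilde K_i$ factor through a single simple summand, so that the irreducibility input of Lemma \ref{lem:sun} and the raising-operator argument of Lemma \ref{lem:cri} can be applied one factor at a time, and that the finite component group $G/G^0$ is absorbed into the outer $G$-invariance rather than into the $\mathfrak g_m$-condition (this is precisely the point at which the answer $((A_m(E,F)|_x)^{\mathfrak g_m})^{G}$ differs from the connected-holonomy answer $(A_m(E,F)|_x)^{\mathfrak g_m}$ of Theorem \ref{thm:holo}).
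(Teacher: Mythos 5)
Your argument is essentially correct, but it takes a genuinely different route from the paper. You work intrinsically at a single fibre, using the de Rham splitting of the holonomy representation $T_x=\bigoplus_\alpha V_\alpha$ to make the curvature operators $\tilde K_i$ block-diagonal, and then rerun the invariant-theoretic core of the proof of Theorem \ref{thm:holo} (Lemma \ref{lem:sun} for irreducibility of $\Sym^2V_\alpha^*\otimes V_\alpha\cap V_\alpha^*\otimes\mathfrak g_\alpha$, Lemma \ref{lem:cri} for the raising-operator promotion to $(\mathfrak g_\alpha)_m$-invariance) one simple factor at a time, with the flat factors imposing no condition. The paper instead invokes the global Beauville--Bogomolov structure theorem: a finite cover $p:\tilde X\to X$ splits as $T\times X_1\times\cdots\times X_k$ with each $X_i$ of holonomy $SU(m_i)$ or $Sp(m_i/2)$, so that Theorem \ref{thm:holo} and Corollary \ref{cor:torus} apply verbatim to the factors upstairs, and the answer descends to $X$ by identifying sections of $A_m(E,F)$ with deck-invariant sections of $A_m(p^*E,p^*F)$; the extra outer $G$-invariance (equivalently $G/G_0$-invariance) appears precisely as the descent condition. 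Your approach buys independence from the global decomposition theorem at the cost of redoing the blockwise curvature bookkeeping (nonvanishing of each non-flat curvature block at some point, compatibility of the splitting with parallel transport when $G/G^0$ permutes isomorphic factors); the paper's approach buys a short reduction to already-proved cases and a clean mechanism for the component group, at the cost of citing a deep structure theorem. Both yield the same characterization $((A_m(E,F)|_x)^{\mathfrak g_m})^{G}$.
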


 \begin{proof}
 It is well known that if $X$ is a compact Ricci-flat K\"ahler manifold, then it admits a finite cover  $p:\tilde X\to X$,
$$\tilde X=T\times X_1\times\cdots\times X_k,$$ where $T$ is a flat K\"ahler torus, and $X_i$ has holonomy group $SU(m_i)$ or $Sp(\frac {m_i} 2)$ with $\dim X_i=m_i$. (See for example page 124 of \cite{J}.)
 If $E$ and $F$ are sums of some copies of  the holomorphic tangent and cotangent bundles of $X$, then the pullback $p^*E$ and $p^*F$ are sums of some copies of  the holomorphic tangent and cotangent bundles of $\tilde X$. We have
 $$p^* A_m(E,F)=A_m(p^*E,p^*F).$$
  By Theorem \ref{thm:holo}, Corollary \ref{cor:torus} and the fact that holonomoy group of $T$ is trivial,   $r_{\tilde x}$ induces an isomorphism from $\Gamma(A_m(p^*E,p^*F))$ to $(A_m(p^*E,p^*F)|_{\tilde x})^{\mathfrak g_m}$.

  The pullback gives an imbedding $p^*: \Gamma(X, A(E,F))\to \Gamma(\tilde X, A(p^*E,p^*F)$. By Theorem \ref{thm:GE}, any element $\tilde a\in \Gamma(\tilde X, A(p^*E,p^*F)$  satisfies  $\tilde \nabla a=0$. So  $\tilde a=p^*a$ for some smooth section $a$ of $A(E,F)$ if and only if $a|_{\tilde x}$ is $G$ invariant. Since $\tilde a$ is holomorphic, $a$ must be holomorphic. Thus $r_{\tilde x} \circ p^*$ induces an isomorphism from $\Gamma(X, A(E,F))$ to $((A_m(p^*E,p^*F)|_{\tilde x})^{\mathfrak g_m})^G$, which means $r_x$ induces an isomorphism from $\Gamma(A_m(E,F))$ to $((A_m(E,F)|_x)^{\mathfrak g_m})^G$.
\end{proof}

$G$ has a connected component $G_0$ which contain the identity. $G_0$ is a normal subgroup of $G$ and it  acts trivially on  $(A_m(E,F)|_x)^{\mathfrak g_m}$. So  $((A_m(E,F)|_x)^{\mathfrak g_m})^G= ((A_m(E,F)|_x)^{\mathfrak g_m})^{G/G_0}$
\section{The global sections of chiral de Rham complex on K3 surfaces}

In this paper, we will follow the formalism of the vertex algebra developed in \cite{K}. We also use the notation in our earlier paper \cite{LSSII} and \cite{S}.
\subsection{Chiral de Rham complex}
The chiral de Rham complex~\cite{MSV, MS} is a sheaf of vertex algebras $\Omega_X^{ch}$ defined on any smooth manifold $X$ in either the algebraic, complex analytic, or $C^{\infty}$ categories. In this paper we work exclusively in the complex analytic setting. For the vertex algebra, we refer \cite{K}.

Let $\Omega_N$ be the tensor product of $N$ copies of the $\beta\gamma-bc$ system. It has $2N$ even generators $\beta^1(z),\cdots ,\beta^N(z),\gamma^1(z),\cdots,\gamma^N(z)$ and $2N$ odd generators $b^1(z),\cdots ,b^N(z), c^1(z),\cdots,c^N(z)$.
Their nontrivial OPEs are $$\beta^i(z) \gamma^j(w)\sim \frac {\delta^i_j}{z-w},\quad b^i(z) c^j(w)\sim \frac {\delta^i_j}{z-w} .$$

Given a coordinate system $(U,\gamma^1,\cdots \gamma^N)$ of $X$,
$\mathbb C[\gamma^1,\cdots \gamma^N]\subset \mathcal O(U)$ can be regarded as a subspace of $\Omega_N$
by identifying $\gamma^i$ with  $\gamma^i(z)\in \Omega_N$. As a linear space, $\Omega_N$ has a $\mathbb C[\gamma^1,\cdots \gamma^N]$ module structure. $\Omega_X^{ch}(U)$ is the localization of $\Omega_N$ on
$U$, $$\Omega_X^{ch}(U)=\Omega_N\otimes_{\mathbb C[\gamma^1,\cdots \gamma^N]}\mathcal O(U).$$
Then $\Omega_X^{ch}(U)$ is the vertex algebra generated by $\beta^i(z), b^i(z), c^i(z)$ and $f(z)$, $f\in \mathcal O(U)$. These generators satisfy the nontrivial OPEs
$$\beta^i(z)  f(w)\sim \frac {\frac{\partial f}{\partial \gamma^i}(z)}{z-w},\quad b^i(z) c^j(w)\sim \frac {\delta^i_j}{z-w},$$ as well as the normally ordered relations $$:f(z)g(z):\ =fg(z), \text{ for }  f, g \in \mathcal O(U).$$
 $\Omega_X^{ch}(U)$ is spanned by the elements
\begin{equation}\label{eq.span}
:\partial^{k_1}\beta^{i_1}(z)\cdots \partial^{k_s}\beta^{i_s}(z)\partial^{l_1}b^{j_1}(z)
\cdots \partial^{l_t}b^{j_t}\partial^{m_1}c^{r_1}(z)
 \cdots\partial^{n_1}\gamma^{s_1}(z)\cdots f(\gamma)(z): , \quad f(\gamma)\in \mathcal O(U) .
 \end{equation}
 Let $\tilde \gamma^1,\cdots \tilde \gamma^N$ be another set of coordinates on $U$, with
$$\tilde \gamma^i=f^i(\gamma^1,\cdots \gamma^N), \quad \gamma^i=g^i(\tilde \gamma^1,\cdots \tilde \gamma^N).$$
The coordinate transformation equations for the generators are
\begin{align}\label{chi.coo}
\partial \tilde \gamma^i(z)&=\sum :\frac{\partial f^i}{\partial \gamma^j}(z)\partial \gamma^j(z):\,, \nonumber \\
\tilde b^i(z)&=\sum :\frac{\partial g^j}{\partial \tilde \gamma^i}(g(\gamma))b^j: \nonumber\,, \\
\tilde c^i(z)&=\sum :\frac{\partial f^i}{\partial \gamma^j}(z)c^j(z):\,,\\
\tilde \beta^i(z)&=\sum :\frac{\partial g^j}{\partial \tilde \gamma^i}(g(\gamma))(z)\beta^j(z):
+\sum ::\frac{\partial}{\partial \gamma^k}(\frac{\partial g^j}{\partial \tilde \gamma^i}(g(\gamma)))(z)c^k(z):b^j(z):\,.\nonumber
\end{align}




From \cite{S}, $\Omega_X^{ch}$ has an increasing filtration $\mathcal Q_n$, $n\in \mathbb Z_{\geq 0}$ and its associated graded sheaf is
$$\text{gr}(\cQ)=\bigoplus_n \cQ_n/\cQ_{n-1}.$$
Locally,
 $\cQ_n(U)$ is spanned by the elements with only at most $n$ copies of $\beta$ and $b$ , i.e. the elements in equation~\eqref{eq.span} with $s+t\leq n$.

Then the associated graded object
$$(\text{gr}\cQ)(U)=\bigoplus_n \cQ_n(U)/\cQ_{n-1}(U)$$
is a $\partial$-ring. A $\mathbb Z_{\geq 0}$ graded, associative, super-commutative algebra equipped with a derivation $\partial$ of degree zero is called
$\partial$-ring. On $(\text{gr}\cQ)(U)$, the product and the derivation $\partial$ are induced from the wick product and $\partial$
from $\Omega_X^{ch}(U)$, respectively.

For each $n\geq 0$, let  $$\phi_n:\cQ_n(U )\to \cQ_n(U)/\cQ_{n-1}(U).$$ be the projection.
As a ring with a derivation, $\text{gr}(Q(U))$ is generated by
$$\beta^i=\phi_1(\beta^i(z)), \quad b^i=\phi_1(b^i(z)),\quad c^i= \phi_0(c^i(z)),\quad \phi_0(f(z)),\, f\in \cO(U).$$

 $\cQ_n(U)/\cQ_{n-1}(U)$ is spanned by
\begin{equation} \label{bas:a}
a=\partial^{k_1}\beta^{i_1}\cdot\cdot\cdot\partial^{k_s}\beta^{i_s}
\partial^{k_{s+1}}b^{i_{s+1}}\cdot\cdot\cdot \partial^{k_{n}}b^{i_{n}}\partial^{m_1}c^{r_1}
 \cdot\cdot\cdot\partial^{n_1}\gamma^{s_1}\cdot\cdot\cdot \partial^{n_t}\gamma^{s_t}f(\gamma),\quad f(\gamma)\in \mathcal O(U).
 \end{equation}

For the sheaf $\text{gr}(\cQ)$, the coordinate transformation equations of $\beta, \gamma, b, c $ are
\begin{align}\label{chi.co2}
\partial^n \tilde \gamma^i&=\sum \partial^{n-1}(\frac{\partial f^i}{\partial \gamma^j}\partial \gamma^j),\nonumber\\
\partial^n \tilde b^i&=\sum \partial^n(\frac{\partial g^j}{\partial \tilde \gamma^i}(g(\gamma))b^j),\nonumber\\
\partial^n \tilde c^i&=\sum \partial^n(\frac{\partial f^i}{\partial \gamma^j}c^j),\\
\partial^n \tilde \beta^i&=\sum \partial^n(\frac{\partial g^j}{\partial \tilde \gamma^j}(g(\gamma))\beta^i)+\sum 
\partial^n(\frac{\partial}{\partial \gamma^k}(\frac{\partial g^j}{\partial \tilde \gamma^i}(g(\gamma)))c^kb^j).\nonumber
\end{align}
 The only difference between these equations and those of the chiral de Rham sheaf is that the Wick product
 is replaced by the ordinary product in an associated super commutative algebra.

 $\text{gr}(\cQ)$ has an increasing filtration $\text{gr}(\cQ)_{n,s}$, $0\leq s\leq n$ and its associated graded sheaf is
 $$\text{gr}^2(\cQ)=\bigoplus_{n,s} \text{gr}(\cQ)_{n,s}/\text{gr}(\cQ)_{n,s-1}.$$

 Locally,   $\text{gr}(\cQ)_{n,s}(U)$ is spanned
by all elements $a\in \cQ_n(U)/\cQ_{n-1}(U)$ of the form \eqref{bas:a} with the number of $\beta$ less or equal than $s$.

The associated graded object
$$\text{gr}^2(\cQ)(U)=\bigoplus_{n,s} \text{gr}(\cQ)_{n,s}(U)/ \text{gr}(\cQ)_{n,s-1}(U)$$
is a $\partial$-ring. The product and the derivation $\partial$ are
 induced from the product and derivation of $\text{gr}(\cQ)(U)$.

 Let
$$\psi_{n,s}: \text{gr}(\cQ)_{n,s}(U)\to \text{gr}(\cQ)_{n,s}(U)/ \text{gr}(\cQ)_{n,s-1}(U)$$
be the projection as a ring with a derivation,$\text{gr}^2(Q(U))$ is generated by
 $$\psi_{1,1}(\beta^i),\psi_{1,0}(b^i), \psi_{0,0}(c^i)\,\text{ and } \psi_{0,0}(f(\gamma)),\,f(\gamma) \in \mathcal O(U).$$
When no confusion arise, the symbols $\beta^i, \gamma^i, b^i, c^i$ in $\text{gr}(\cQ)(U)$ will also be used to denote the corresponding elements $\psi_{1,1}(\beta^i)$, $\psi_{0,0}(\gamma^i)$, $\psi_{1,0}(b^i), \psi_{0,0}(c^i)$ in $\text{gr}^2(\cQ)(U)$.

For the sheaf $\text{gr}^2(\cQ)$, the  relations of $\beta, \gamma, b, c $ under the coordinate transformation are
\begin{align}\label{chi.co3}
\partial^n \tilde \gamma^i&=\sum \partial^{n-1}(\frac{\partial f^i}{\partial \gamma^j}\partial \gamma^j),\nonumber\\
\partial^n \tilde b^i&=\sum \partial^n(\frac{\partial g^j}{\partial \tilde \gamma^i}(g(\gamma))b^j),\nonumber\\
\partial^n \tilde c^i&=\sum \partial^n(\frac{\partial f^i}{\partial \gamma^j}c^j),\\
\partial^n \tilde \beta^i&=\sum \partial^n(\frac{\partial g^j}{\partial \tilde \gamma^i}(g(\gamma))\beta^j).\nonumber
\end{align}
By these coordinate transformation equations, we have
\begin{prop}\label{prop:iso}$\text{gr}^2(\cQ)$ is exactly the sheaf  $\mathcal A_\infty(T,T\oplus T^*)$.
\end{prop}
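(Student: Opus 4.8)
The plan is to exhibit an explicit isomorphism of sheaves of $\partial$-rings between $\text{gr}^2(\cQ)$ and $\mathcal A_\infty(T,T\oplus T^*)$ by matching generators chart by chart and then checking that the two descriptions of the gluing data coincide. First I would fix a coordinate chart $(U,\gamma^1,\dots,\gamma^N)$ and read off the local structure of $\text{gr}^2(\cQ)(U)$ from the spanning set \eqref{bas:a}: as an $\cO(U)$-algebra it is freely generated by the even elements $\partial^n\gamma^i$ ($n\geq 1$) and $\partial^n\beta^i$ ($n\geq 0$) together with the odd elements $\partial^n b^i,\partial^n c^i$ ($n\geq 0$), the degree-zero coordinates $\gamma^i$ being absorbed into $f(\gamma)\in\cO(U)$, and $\partial$ acting as the derivation that raises $n$ by one. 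Comparing with $R_\infty(y,e,f)$ for $E=T$ and $F=T\oplus T^*$, I would define the local identification $y_i^{(n)}\mapsto\partial^n\gamma^i$, $e_i^{(n)}\mapsto\partial^n\beta^i$, and $f_i^{(n)}\mapsto\partial^n b^i$ (resp. $\partial^n c^i$) according to the $T$- (resp. $T^*$-) summand of $F$. This is manifestly an isomorphism of supercommutative algebras preserving the $\mathbb Z_{\geq0}^3$-grading, and one checks it intertwines $D$ with $\partial$ by noting that the first summand $\sum_i\frac{\partial}{\partial y_i}\otimes y_i^{(1)}$ of $D$ in \eqref{eqn:D} reproduces, via the chain rule, the action $\partial f(\gamma)=\frac{\partial f}{\partial\gamma^i}\partial\gamma^i$ on $\cO(U)$, while $1\otimes\tilde D$ matches $\partial^n\mapsto\partial^{n+1}$ on the remaining generators.

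The heart of the argument is to verify that these local isomorphisms are compatible with the transition functions, that is, that the coordinate-change formulas \eqref{chi.co3} for $\text{gr}^2(\cQ)$ are identical to the gluing maps $r_{\alpha\beta}$ of \eqref{eqn:trans} under the identification above. Writing $\tilde\gamma=f(\gamma)$ and $\gamma=g(\tilde\gamma)$, I would observe that the Jacobian factor $\frac{\partial g^i}{\partial\tilde\gamma^j}(g(\gamma))$ governing $\tilde\beta^i$ and $\tilde b^i$ is precisely the holomorphic frame transition of the tangent bundle $T$, while the factor $\frac{\partial f^i}{\partial\gamma^j}$ governing $\tilde c^i$ is the frame transition of $T^*$. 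Since \eqref{chi.co3} applies $\partial^n$ to these linear expressions and \eqref{eqn:trans} applies $D^n$ to the corresponding $g_{ij}e_{\beta,j}^{(0)}$ and $h_{ij}f_{\beta,j}^{(0)}$, the equivariance $\partial\leftrightarrow D$ established above makes the two families of transition maps coincide term by term; the $\gamma$-transformation $\partial^n\tilde\gamma^i=\partial^n f^i(\gamma)$ likewise matches $r_{\alpha\beta}(y_{\alpha,i}^{(n)})=D^nf_i(y_\beta)$. Having checked this cocycle-level agreement, the local isomorphisms patch to a global isomorphism of sheaves of $\partial$-rings respecting all gradings, which is the assertion.

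I expect the main obstacle to lie not in the algebra of the first step but in justifying the passage to $\text{gr}^2$ that underlies the second. In the full complex $\Omega_X^{ch}$, and even in $\text{gr}(\cQ)$, the generator $\beta^i$ transforms by \eqref{chi.coo} (resp. \eqref{chi.co2}) with an inhomogeneous correction of the form $\partial^n\big(\frac{\partial}{\partial\gamma}(\frac{\partial g^i}{\partial\tilde\gamma^j}(g(\gamma)))\,c^k b^j\big)$, which is \emph{not} tensorial and therefore cannot arise from any vector-bundle transition function. The two successive filtrations $\cQ_n$ and $\text{gr}(\cQ)_{n,s}$ are engineered precisely so that this correction, carrying one fewer factor of $\beta$, is pushed into a lower filtration step and annihilated in $\text{gr}^2(\cQ)$; thus the point requiring care is to confirm that after taking the associated graded twice the $\beta$-transformation reduces exactly to the linear rule in \eqref{chi.co3}, so that $\text{gr}^2(\cQ)$, unlike $\Omega_X^{ch}$ or $\text{gr}(\cQ)$, is genuinely the sheaf of sections of a vector bundle, namely $\mathcal A_\infty(T,T\oplus T^*)$.
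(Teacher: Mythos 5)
Your proposal is correct and follows essentially the same route as the paper, which justifies Proposition \ref{prop:iso} simply by pointing to the coordinate transformation equations \eqref{chi.co3} and observing that they are exactly the transition functions \eqref{eqn:trans} defining $\mathcal A_\infty(T,T\oplus T^*)$; you supply the generator matching, the $D\leftrightarrow\partial$ equivariance, and the cocycle check in more detail than the paper does. Your closing remark about the non-tensorial correction to $\tilde\beta^i$ being killed only after the second associated graded is precisely the point of the double filtration and is consistent with the paper's setup.
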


From \cite{S}, we have the following \textsl{reconstruction properties} for the holomorphic sections of $\Omega_X^{ch}$.
\begin{lemma}\label{lem.q}
If $a_i\in \cQ_{n_i}(X)$, for $1\leq i\leq l$, such that $\phi_{n_i}(a_i)$ generate $\text{gr}(\cQ)(X)$ as a $\partial$-ring, then
$\phi_n$ is surjective, and it therefore induces an isomorphism
$$\cQ_{n}(M)/\cQ_{n-1}(X){\cong} \cQ_{n}/\cQ_{n-1}(X).$$
Furthermore, $\{a_i|\ 1\leq i\leq l\}$ strongly generates the vertex algebra $\Omega_X^{ch}(M)$. \end{lemma}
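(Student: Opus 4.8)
The plan is to turn the generation hypothesis, which lives on the associated graded $\text{gr}(\cQ)(X)$, into statements about the vertex algebra $\Omega_X^{ch}(X)$ itself by lifting $\partial$-ring relations to normally ordered products. Throughout I read $M=X$. The essential mechanism is that the $\cQ_\bullet$-filtration is multiplicative for the Wick product and compatible with $\partial$: by the very construction of $\text{gr}(\cQ)$, if $u\in\cQ_p(X)$ and $v\in\cQ_q(X)$ then $:uv:\in\cQ_{p+q}(X)$ with $\phi_{p+q}(:uv:)=\phi_p(u)\phi_q(v)$, and $\partial u\in\cQ_p(X)$ with $\phi_p(\partial u)=\partial\phi_p(u)$. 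This is where the explicit description of $\cQ_n$ by the number of $\beta$'s and $b$'s is used, since that count is exactly what adds under the Wick product and is preserved by $\partial$.

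First I would prove surjectivity of $\phi_n\colon\cQ_n(X)\to(\cQ_n/\cQ_{n-1})(X)$. Given $w\in(\cQ_n/\cQ_{n-1})(X)\subset\text{gr}(\cQ)(X)$, the hypothesis writes $w$ as a $\partial$-polynomial in the homogeneous generators $\phi_{n_i}(a_i)$; since $\text{gr}(\cQ)(X)$ is graded and $\partial$ has degree zero, I may discard all monomials of weight $\neq n$ and keep an expression all of whose monomials have total weight $n$. I then form $\tilde w\in\Omega_X^{ch}(X)$ by the same formula, replacing each $\phi_{n_i}(a_i)$ by $a_i$, each product by an iterated Wick product in a fixed chosen order, and $\partial$ by the vertex-algebra derivation. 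By the multiplicativity recorded above, $\tilde w\in\cQ_n(X)$ and $\phi_n(\tilde w)=w$ regardless of the ordering chosen, because on $\text{gr}(\cQ)$ the Wick product is super-commutative. Hence $\phi_n$ is onto. Its kernel is $\cQ_{n-1}(X)$ by definition of the filtration, so it descends to the asserted isomorphism $\cQ_n(X)/\cQ_{n-1}(X)\cong(\cQ_n/\cQ_{n-1})(X)$.

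Finally, strong generation follows by induction on the filtration degree, using exhaustiveness $\Omega_X^{ch}(X)=\bigcup_N\cQ_N(X)$. For $v\in\cQ_N(X)$ I apply the previous step to $\phi_N(v)$: the construction produces a normally ordered $\partial$-polynomial $\tilde v$ in the $a_i$ with $\phi_N(\tilde v)=\phi_N(v)$, so $v-\tilde v\in\cQ_{N-1}(X)$; by the induction hypothesis $v-\tilde v$ is itself a normally ordered polynomial in the $a_i$ and their derivatives, hence so is $v$. The base case $\cQ_{-1}=0$ is vacuous. This is precisely the assertion that $\{a_i\mid 1\leq i\leq l\}$ strongly generates $\Omega_X^{ch}(X)$.

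The one genuinely delicate point---and the reason the lemma needs a hypothesis at all---is that the global-sections functor is only left exact, so a priori $\cQ_n(X)/\cQ_{n-1}(X)\to(\cQ_n/\cQ_{n-1})(X)$ is merely injective. What saves the argument is that the generators of $\text{gr}(\cQ)(X)$ are assumed to lift to \emph{global} sections $a_i$; the multiplicativity of the filtration then lets me lift every polynomial relation globally, so that no obstruction to lifting a global section of the quotient sheaf can survive. I expect this lifting step, together with the bookkeeping that the lift of a weight-$n$ monomial lands in $\cQ_n$ and not a higher piece, to be where all the care is needed; the rest is formal.
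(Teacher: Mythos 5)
The paper does not contain its own proof of this lemma --- it is imported verbatim from \cite{S} --- but your argument is exactly the standard reconstruction argument used there: multiplicativity of the $\cQ_\bullet$-filtration under Wick products and its compatibility with $\partial$, lifting a homogeneous global section of the quotient sheaf through a globally defined normally ordered $\partial$-polynomial in the $a_i$ (well defined modulo lower filtration since $\text{gr}(\cQ)$ is super-commutative), and induction on filtration degree for strong generation, with the correct identification of the one delicate point, namely that global sections are only left exact so the global lifts $a_i$ are what make surjectivity of $\phi_n$ possible. Your proposal is correct and essentially coincides with the intended proof; the single step you assert without justification, exhaustiveness $\Omega_X^{ch}(X)=\bigcup_N\cQ_N(X)$, is immediate because an element of fixed conformal weight can involve only boundedly many $\beta$'s and $b$'s.
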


\begin{lemma} \label{lem.q1}
If $a_i\in \text{gr}(\cQ)_{n_i,s_i}$ for $1\leq i\leq l$, such that $\psi_{n_i,s_i}(a_i)$ generate $\text{gr}(\cQ)(X)$ as a $\partial$-ring, then
$\psi_{n,s}$ is surjective, and it induces an isomorphism
$$\text{gr}(\cQ)_{n,s}(X)/\text{gr}(\cQ)_{n,s-1}(X){\cong} \text{gr}(\cQ)_{n,s}/\text{gr}(\cQ)_{n,s-1}(X).$$
Furthermore, $\{a_i|\ 1\leq i\leq l\}$ generate $\text{gr}(\cQ)(X)$ as a $\partial$-ring.
\end{lemma}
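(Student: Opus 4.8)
The plan is to follow the filtered reconstruction argument already used for Lemma \ref{lem.q}, now applied to the finer bigraded filtration $\text{gr}(\cQ)_{n,s}$ in place of $\cQ_n$. The two inputs are the left-exactness of the global-section functor and the compatibility of the filtration with the $\partial$-ring structure; granting these, the hypothesis that the leading symbols $\psi_{n_i,s_i}(a_i)$ generate $\text{gr}^2(\cQ)(X)$ as a $\partial$-ring propagates one step up the tower to produce generators of $\text{gr}(\cQ)(X)$.

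First I would record the injection. Since $\text{gr}(\cQ)_{n,s-1}$ is a subsheaf of $\text{gr}(\cQ)_{n,s}$ with quotient sheaf the $(n,s)$-component of $\text{gr}^2(\cQ)$, applying $\Gamma(X,-)$ gives a left-exact sequence; hence the kernel of $\psi_{n,s}\colon \text{gr}(\cQ)_{n,s}(X)\to (\text{gr}(\cQ)_{n,s}/\text{gr}(\cQ)_{n,s-1})(X)$ is exactly $\text{gr}(\cQ)_{n,s-1}(X)$, and $\psi_{n,s}$ induces an injection on $\text{gr}(\cQ)_{n,s}(X)/\text{gr}(\cQ)_{n,s-1}(X)$. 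The real content is the surjectivity of this injection. Here I would use that the filtration is multiplicative, $\text{gr}(\cQ)_{n,s}\cdot \text{gr}(\cQ)_{n',s'}\subseteq \text{gr}(\cQ)_{n+n',s+s'}$, and stable under $\partial$; both are visible from the spanning set \eqref{bas:a}, since the product and the derivation $\partial$ preserve the number $s$ of $\beta$'s and the number $n$ of $\beta$'s together with $b$'s. Consequently $\text{gr}^2(\cQ)$ is precisely the bigraded $\partial$-ring of leading symbols, whose product is additive in the bidegree $(n,s)$. Given a bihomogeneous global section $w$ of bidegree $(n,s)$ of $\text{gr}^2(\cQ)$, the hypothesis expresses $w$ as a $\partial$-polynomial in the generators $\psi_{n_i,s_i}(a_i)$, in which every monomial involves generators whose bidegrees sum to exactly $(n,s)$. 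Forming the identical $\partial$-polynomial $W$ in the global sections $a_i$ themselves, but now using the product and derivation of $\text{gr}(\cQ)(X)$, yields by multiplicativity and $\partial$-stability an element $W\in \text{gr}(\cQ)_{n,s}(X)$ with $\psi_{n,s}(W)=w$. Thus $\psi_{n,s}$ is onto, and with the injection above it is an isomorphism.

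Finally I would deduce that $\{a_i\}$ generate $\text{gr}(\cQ)(X)$ as a $\partial$-ring by induction on the bidegree, ordered lexicographically. For $u\in \text{gr}(\cQ)_{n,s}(X)$, surjectivity of $\psi_{n,s}$ produces a $\partial$-polynomial $W$ in the $a_i$ with $\psi_{n,s}(W)=\psi_{n,s}(u)$; then $u-W$ lies in $\text{gr}(\cQ)_{n,s-1}(X)$, a strictly earlier step of the filtration, which by the inductive hypothesis is a $\partial$-polynomial in the $a_i$, whence so is $u$. The only point demanding genuine attention, and the sole place where the concrete geometry of $\text{gr}^2(\cQ)$ enters, is the verification that the filtration is multiplicative and $\partial$-stable and that the product in $\text{gr}^2(\cQ)$ is exactly additive in the bidegree $(n,s)$; these facts are read off from \eqref{bas:a} and the transformation laws \eqref{chi.co3}, and everything else is the formal filtered-sheaf bookkeeping already established for Lemma \ref{lem.q}.
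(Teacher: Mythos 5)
Your argument is correct and is the standard associated-graded reconstruction: left-exactness of $\Gamma(X,-)$ applied to $0\to \text{gr}(\cQ)_{n,s-1}\to \text{gr}(\cQ)_{n,s}\to \text{gr}(\cQ)_{n,s}/\text{gr}(\cQ)_{n,s-1}\to 0$ gives injectivity, lifting the bihomogeneous $\partial$-polynomial expression of a section of $\text{gr}^2(\cQ)$ through the global $a_i$ gives surjectivity, and downward induction on $s$ (with $\text{gr}(\cQ)_{n,-1}=0$) gives generation. The paper itself states this lemma without proof, citing \cite{S}, and the proof there proceeds by exactly this filtered-sheaf argument, so there is nothing to object to beyond noting that your only substantive verification---that the filtration is multiplicative and $\partial$-stable, read off from \eqref{bas:a} and \eqref{chi.co3}---is indeed the point the cited reference also relies on.
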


\subsection{Holomorphic sections of the chiral de Rham complex}
If $X$ is a Calabi-Yau manifold, according to \cite{MSV}, $\Omega_X^{ch}(X)$ is a topological vertex algebra. There are four global sections $Q(z), L(z), J(z), G(z)$.
 Locally, they can be represented by
\begin{align}
 &Q(z)= \sum_{i=1}^N:\beta^i(z)c^i(z):,& &L(z)=\sum_{i=1}^N(:\beta^i(z)\partial\gamma^i(z):-:b^i(z)\partial c^i(z):),&\nonumber\\
&J(z)=-\sum_{i=1}^N:b^i(z)c^i(z):,& &G(z)=\sum_{i=1}^N:b^i(z)\partial\gamma^i(z):,&\nonumber
\end{align}
According to \cite{EHKZ}, if $\omega$ is a nowhere vanishing holomorphic $N$ form  of $X$, there are two global sections $D(z)$ and $E(Z)$  of  $\Omega_X^{ch}$ can be constructed.
Locally, if $(U,\gamma)$ are coordinate of $U$ such that $\omega=d\gamma^1\wedge\cdots \wedge d\gamma^N$,  $D(z)$ and $E(Z)$ can be represented by
 $$D(z)=  :b^1(z)b^2(z)\cdots b^N(z):,\quad E(z)=:c^1(z)c^2(z)\cdots c^N(z):.$$
 Let $B(z)=Q(z)_{(0)}D(z)$,  $C(z)=G(z)_{(0)}E(z)$. It is easy to see that
 these eight sections $Q(z)$, $L(z)$, $J(z)$, $G(z)$, $D(z)$, $E(z)$, $B(z)$ and $C(z)$ always close (nonlinearly)
 under operator product expansion.
  Let $\mathcal V_N$ be the vertex algebra generated by these eight sections.  When $N=2$, $\mathcal V_2$ is an $\mathcal N=4$ superconfomal vertex algebra with central charge $6$. (Here the Virasoro field is $L(z)-\frac 1 2 \partial J(z)$.) We will show that if $X$ is a K3 surface, these eight sections strongly generate $\Omega_X^{ch}(X)$ as a vertex algebra.

Let $\bar \Omega_N$ be the subalgebra of the  $\Omega_N$, which is generated by $\beta^i(z),\partial \gamma^i(z), b^i(z), c^i(z)$. It is a tensor product of a system of free bosons and a system of free fermions. $\mathcal V_0$ is a subalgebra of $\bar \Omega_N$. On $\bar \Omega_N$ there is a positive definite Hermitian form $(-,-)$ with the following property:
\begin{eqnarray}\label{eqn:propherm}
&(\beta_{(n)}^{x_i}A,B)=(A,\alpha^{x'_i}_{(-n)}B), &\text{for any } n\in\mathbb Z , n\neq 0, \forall A, B\in \bar \Omega_N; \\
&(b^{x_i}_{(n)}A,B)=(A, c^{x_i'}_{(-n-1)}B), &\text{for any } n\in\mathbb Z, \forall A, B\in  \bar \Omega_N.\nonumber
\end{eqnarray}
It is easy to check
\begin{align}\label{eqn:conjugate}
&Q_{(n)}^*= G_{(-n+1)},& &J_{(n)}^*= J_{(-n)},&\\
&L_{(n)}^*= L_{(-n+2)}-(n-1)J_{(-n+1)},& &D_{(n)}^*= (-1)^{\frac {N(N-1)}2 }E_{(N-2-n)},&\nonumber
\end{align}
Let $\tilde L=L-\frac 1 2 \partial J$. $\tilde L$ is a Virasoro field with central charge $3N$.  $\tilde L_{(n)}^*=\tilde L_{(-n+2)}$.  We can conclude
\begin{lemma}\label{lem:simple}
 $\bar \Omega_N$ is a unitary representation of the Lie algebra generated by
  $$\{Q_{(n)}, G_{(n)}, J_{(n)},L_{(n)}, D_{(n)}, E_{(n)},B_{(n)}, C_{(n)}\}_{n\in \mathbb Z}$$
   and $\mathcal V_N$ is a simple conformal vertex algebra with central charge $3N$.
\end{lemma}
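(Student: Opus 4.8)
The plan is to prove the two assertions separately, using as the main tools the positive‑definite Hermitian form on $\bar\Omega_N$ recorded in (\ref{eqn:propherm}) and the conjugation formulas (\ref{eqn:conjugate}).

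For the unitarity statement, let $\mathfrak g\subset\End(\bar\Omega_N)$ be the Lie superalgebra generated under (super)commutators by the modes of the eight fields $Q,L,J,G,D,E,B,C$, and let $X\mapsto X^*$ denote the adjoint with respect to $(-,-)$. A unitary structure for $\mathfrak g$ amounts to the positivity of $(-,-)$ together with the fact that $*$ is an anti‑linear anti‑involution preserving $\mathfrak g$; the positivity is given in (\ref{eqn:propherm}). For the invariance, (\ref{eqn:conjugate}) shows that the adjoint of every mode of $Q,G,J,L,D,E$ is again a mode of one of these same fields. For the remaining two generators I would use $B=Q_{(0)}D$ and $C=G_{(0)}E$: expanding $B_{(n)}$ and $C_{(n)}$ in modes of $Q,D$ and of $G,E$ and applying the already known adjoints (while tracking the fermionic signs) shows that $B_{(n)}^*$ and $C_{(n)}^*$ again lie among the modes of $C$ and $B$. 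Since $*$ carries each generating mode back into $\mathfrak g$ and reverses brackets, it preserves $\mathfrak g$, and with $(-,-)$ positive definite this is precisely the assertion that $\bar\Omega_N$ is a unitary representation.

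For the conformal statement I would take $\tilde L=L-\tfrac12\partial J$, which lies in $\mathcal V_N$ because $L$ and $J$ do. One checks $\tilde L_{(0)}=\partial$, so that $\tilde L$ is a genuine conformal vector whose grading operator is $\tilde L_{(1)}=L_{(1)}+\tfrac12 J_{(0)}$; relative to this grading the generators $\beta^i,\partial\gamma^i$ have weight $1$ and $b^i,c^i$ have weight $\tfrac12$, all strictly positive. Hence $\bar\Omega_N$, and with it the subalgebra $\mathcal V_N$, is a $\tfrac12\mathbb Z_{\geq0}$‑graded conformal vertex algebra with finite‑dimensional homogeneous pieces and degree‑zero part $\mathbb C\mathbf 1$; its central charge is the stated value $3N$ of $\tilde L$.

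Simplicity then follows from unitarity. The relation $\tilde L_{(n)}^*=\tilde L_{(-n+2)}$ makes $\tilde L_{(1)}$ self‑adjoint, so distinct weight spaces are orthogonal and $(-,-)$ restricts to a positive‑definite invariant Hermitian form on $\mathcal V_N$. If $I\subsetneq\mathcal V_N$ were a nonzero graded ideal, invariance would force the orthogonal complement $I^\perp$ to be an ideal as well, and positive‑definiteness would give $\mathcal V_N=I\oplus I^\perp$ with $I^\perp\neq0$; since the degree‑zero part $\mathbb C\mathbf 1$ is one‑dimensional it lies wholly in one summand, and the summand containing $\mathbf 1$ must be all of $\mathcal V_N$ because $a=a_{(-1)}\mathbf 1$ shows $\mathbf 1$ generates, a contradiction. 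I expect the main obstacle to be the first part: checking that the modes of the eight fields genuinely close into a $*$‑invariant Lie superalgebra, in particular determining $B_{(n)}^*$ and $C_{(n)}^*$ and fixing the fermionic sign conventions so that (\ref{eqn:conjugate}) really accounts for the adjoints of all the generators.
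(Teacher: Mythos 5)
Your argument is correct and is exactly the route the paper takes (the paper merely records the positive‑definite form (\ref{eqn:propherm}) and the adjoint relations (\ref{eqn:conjugate}) and then asserts the lemma); you have simply written out the details it leaves implicit. One small remark: the closure of the generated Lie superalgebra under $*$ that you worry about at the end is automatic, since $B_{(n)}=[Q_{(0)},D_{(n)}]$ and $C_{(n)}=[G_{(0)},E_{(n)}]$, so their adjoints are brackets of modes of $G,E$ and $Q,D$ respectively and hence already lie in the algebra generated by the six fields whose adjoints are listed in (\ref{eqn:conjugate}).
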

Assume $X$ is an $N$ dimensional complex manifold with holonomy group $SU(N)$.
By Theorem \ref{thm:holo} and Proposition \ref{prop:iso}, $\text{gr}^2(\cQ)(X)$ is isomorphic to $\text{gr}^2(\cQ)|_x^{\mathfrak{sl}(N,\mathbb C)[t]}$.
$ \text{gr}^2(\cQ)|_x$ is a $\partial$-ring, which is isomorphic to
$$R=\mathbb C[\partial^k\beta^i,\partial^{k+1}\gamma^i,\partial^kb^i,\partial^kc^i], \quad k\geq 0, 1\leq i\leq N.$$
So to calculate the holomorphic sections of $\text{gr}^2(\cQ)$ is to calculate $R^{\mathfrak{sl}(N,\mathbb C)[t]}$. It is easy to see that the following
eight elements of $R$ are $\mathfrak{sl}(N,\mathbb C)[t]$ invariant.
\begin{align}\label{equ:inv}
 &\sum_{i=1}^N\beta^ic^i,&  &\sum_{i=1}^N\beta^i\partial\gamma^i,&\nonumber\\
   &-\sum_{i=1}^N b^ic^i,&  &\sum_{i=1}^Nb^i\partial\gamma^i,&\nonumber\\
   & b^1b^2\cdots b^N,&   &c^1c^2\cdots c^N,&\\
   &\sum_{i=1}^N(-1)^{i-1} b^1\cdots b^{i-1}\beta^ib^{i+1}\cdots b^N,&  &\sum_{i=1}^N(-1)^{i-1} c^1\cdots c^{i-1}\partial\gamma^ic^{i+1}\cdots c^N.&\nonumber
\end{align}

 We have the following theorem.
\begin{thm}\label{thm:chiral}
If the holonomy group of the $N$-dimensional K\"ahler manifold $X$ is $SU(N)$ and $R^{\mathfrak{sl}(N,\mathbb C)[t]}$ is generated by above eight elements as a $\partial$-ring,
 then the  space of global sections of the chiral de Rham complex of  $X$ is  a simple vertex algebra strongly generated by
$$Q(z), L(z), J(z), G(z), B(z), D(z), C(z), E(z).$$
\end{thm}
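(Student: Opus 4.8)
The plan is to push the assumed $\partial$-ring generation of the invariant ring down through the two filtrations on $\Omega_X^{ch}$ via the reconstruction Lemmas \ref{lem.q} and \ref{lem.q1}, and then to identify the resulting strongly generated vertex algebra with the simple vertex algebra $\mathcal V_N$. First I would record that the eight fields $Q,L,J,G,B,D,C,E$ are genuine global holomorphic sections: the first four exist on any Calabi--Yau manifold, $D$ and $E$ are built from the parallel holomorphic volume form, and $B=Q_{(0)}D$, $C=G_{(0)}E$. The essential bookkeeping step is to compute their symbols in $\text{gr}^2(\cQ)(X)$. Writing these fields as $a_i\in\cQ_{n_i}(X)$, I pass to $\bar a_i=\phi_{n_i}(a_i)\in\text{gr}(\cQ)_{n_i,s_i}(X)$ and then to $\psi_{n_i,s_i}(\bar a_i)\in\text{gr}^2(\cQ)(X)$. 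Using the transformation rules (\ref{chi.co3}) together with the definitions of $B$ and $C$, I would verify that these symbols are exactly the eight invariant elements listed in (\ref{equ:inv}); in particular the summand $b^i\partial c^i$ of $L$ sits in strictly lower $\beta$-filtration and is annihilated by $\psi$, leaving $\sum_i\beta^i\partial\gamma^i$, while the brackets $Q_{(0)}D$ and $G_{(0)}E$ produce the two mixed invariants.

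Next, by Proposition \ref{prop:iso} one has $\text{gr}^2(\cQ)\cong\cA_\infty(T,T\oplus T^*)$, so Theorem \ref{thm:holo} identifies $\text{gr}^2(\cQ)(X)$ with $R^{\mathfrak{sl}(N,\mathbb C)[t]}$ via restriction to the fibre at $x$. Under this identification the eight symbols $\psi_{n_i,s_i}(\bar a_i)$ are precisely the eight invariants of (\ref{equ:inv}), which by the hypothesis of the theorem generate $R^{\mathfrak{sl}(N,\mathbb C)[t]}$ as a $\partial$-ring. Lemma \ref{lem.q1} then upgrades this to the statement that the $\bar a_i$ generate $\text{gr}(\cQ)(X)$ as a $\partial$-ring, and Lemma \ref{lem.q} in turn yields that the original eight fields $a_i$ strongly generate the vertex algebra $\Omega_X^{ch}(X)$. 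This is the reconstruction core of the argument, and it is purely formal once the symbol identification of the first paragraph is in place.

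Finally, for simplicity I would restrict a global section to a coordinate chart. Since each of $Q,L,J,G,B,D,C,E$ is locally an element of $\bar\Omega_N$, this restriction is an injective vertex algebra homomorphism $\Omega_X^{ch}(X)\to\bar\Omega_N$ (a global holomorphic section is determined by its restriction to any nonempty open set, using connectedness of $X$ and the conformal weight grading). Because a homomorphism preserves normally ordered products and $\partial$, the image of $\Omega_X^{ch}(X)$ is the subalgebra of $\bar\Omega_N$ strongly generated by the restricted eight fields, which is $\mathcal V_N$ by definition; hence $\Omega_X^{ch}(X)\cong\mathcal V_N$, and this is a simple vertex algebra of central charge $3N$ by the unnumbered lemma preceding the statement (the $N=4$ algebra of central charge $6$ when $N=2$). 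I expect the main obstacle to lie in the first step: carefully tracking the $(n,s)$ filtration degrees so that the eight fields map onto exactly the eight invariants of (\ref{equ:inv}), particularly the derived fields $B$ and $C$ and the vanishing of the lower-order term of $L$; the injectivity of restriction to a chart is the other point requiring the analyticity and connectedness input.
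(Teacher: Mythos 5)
Your proposal follows the paper's proof essentially verbatim: compute the symbols $\psi_{n_i,s_i}(\phi_{n_i}(a_i))$ of the eight fields, match them with the invariants of (\ref{equ:inv}) via Proposition \ref{prop:iso} and Theorem \ref{thm:holo}, and then apply Lemmas \ref{lem.q1} and \ref{lem.q} to conclude strong generation. The only difference is that you make the simplicity claim explicit (injectivity of restriction to a chart and identification of the image with $\mathcal V_N$ inside $\bar\Omega_N$), a step the paper leaves to the unnumbered lemma preceding the theorem rather than a genuinely different method.
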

\begin{proof}

 The images of the above eight sections under the map of $\phi_*$ and $\psi_{*,*}$ locally are
 \begin{align}\label{equ:gl2}
 &\psi_{1,1}(\phi_1(Q(z)))=\sum_{i=1}^N\beta^ic^i,&  &\psi_{1,1}(\phi_1(L(z)))=\sum_{i=1}^N\beta^i\partial\gamma^i,&\nonumber\\
   &\psi_{1,0}(\phi_1(J(z)))=-\sum_{i=1}^N b^ic^i,&  &\psi_{1,0}(\phi_1(G(z)))=\sum_{i=1}^Nb^i\partial\gamma^i,&\nonumber\\
   &\psi_{N,0}(\phi_N(D(z)))=\frac 1 f b^1b^2\cdots b^N,&   &\psi_{0,0}(\phi_0(E(z)))=fc^1c^2\cdots c^N,&\\
   &\psi_{N,1}(\phi_N(B(z)))=\sum_{i=1}^N(-1)^{i-1}\frac 1 f b^1\cdots b^{i-1}\beta^ib^{i+1}\cdots b^N,&  & &\nonumber \\
   & \psi_{0,0}(\phi_0(C(z)))=\sum_{i=1}^N(-1)^{i-1}f c^1\cdots c^{i-1}\partial\gamma^ic^{i+1}\cdots c^N.&  & &\nonumber
\end{align}

 They are global sections of $\text{gr}^2(Q)$. By assumption, the eight elements of (\ref{equ:inv}) generated $\text{gr}^2(\cQ)|_p^{\mathfrak{sl}(N,\mathbb C)[t]}$ as a $\partial$-ring.
  Compare (\ref{equ:inv}) and (\ref{equ:gl2}). By Theorem \ref{thm:holo} the eight global sections in (\ref{equ:gl2}) generate $\text{gr}^2(\cQ)(X)$ as a $\partial$-ring.
By Lemma \ref{lem.q} and \ref{lem.q1},
 the eight sections $Q(z), L(z), J(z), G(z), B(z), D(z), C(z), E(z)$ strongly generate the vertex algebra $\Omega_X^{ch}(X)$. By Lemma \ref{lem:simple}, this vertex algebra is simple.
\end{proof}

From \cite{LSSII} and the classical invariant theory \cite{We}, when $\mathcal N=2$, $R^{\mathfrak{sl}(N,\mathbb C)[t]}$ is generated by the eight elements of (\ref{equ:inv}), so Theorem \ref{thm:gch} is a corollary of Theorem \ref{thm:chiral}.

Let $X$ be an Enriques surface. Then $X$ is the quotient of a K3 surface $\tilde X$ by a fixed point-free involution $\sigma$. $\sigma$ induce an automorphism of
$\Omega_{\tilde X}^{ch}(\tilde X)$ which maps $Q(z)$, $L(z)$, $J(z)$, $G(z)$, $B(z)$, $D(z)$, $C(z)$, $E(z)$ to $Q(z)$, $L(z)$, $J(z)$, $G(z)$, $-B(z)$, $-D(z)$, $-C(z)$, $-E(z)$, respectively.
Obviously, $\Omega_{X}^{ch}(X)$ is isomorphic to the subalgebra of $\Omega_{\tilde X}^{ch}(\tilde X)^\sigma$ consisting the invariant elements of $\sigma$.{\small }
\begin{thm}
$$\Omega_{X}^{ch}(X)=\Omega_{\tilde X}^{ch}(\tilde X)^\sigma.$$
\end{thm}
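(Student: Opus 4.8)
The plan is to exploit that the covering map $p:\tilde X\to X$ is an unramified double cover with deck group $\{1,\sigma\}$, combined with the functoriality of the chiral de Rham complex under local biholomorphisms. Since $\sigma$ acts without fixed points, $p$ is a local biholomorphism, so every point of $X$ has an evenly covered neighborhood $V$ with $p^{-1}(V)=\tilde V_1\sqcup\tilde V_2$ and each $p|_{\tilde V_i}$ a biholomorphism. The sheaf $\Omega_X^{ch}$ is assembled from purely local coordinate data (the $\beta\gamma bc$ generators together with the coordinate-change formulas \eqref{chi.coo}), all of which is preserved under biholomorphisms. Hence $p$ induces a canonical isomorphism of sheaves of vertex algebras $\Omega_{\tilde X}^{ch}\cong p^{-1}\Omega_X^{ch}$; equivalently, since $p\circ\sigma=p$ makes $\sigma$ act on $p_*\Omega_{\tilde X}^{ch}$ by permuting the two sheets, one obtains $\Omega_X^{ch}\cong (p_*\Omega_{\tilde X}^{ch})^{\sigma}$, the invariant part being identified with the diagonal.

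Granting this sheaf-level identification, I would pass to global sections. Taking $\sigma$-invariants commutes with the global-sections functor, because averaging over the order-two group is an idempotent projector in characteristic zero and $\Gamma$ is left exact; moreover $\Gamma(X,p_*\Omega_{\tilde X}^{ch})=\Gamma(\tilde X,\Omega_{\tilde X}^{ch})=\Omega_{\tilde X}^{ch}(\tilde X)$ by the definition of pushforward. Therefore
$$\Omega_X^{ch}(X)=\Gamma\bigl(X,(p_*\Omega_{\tilde X}^{ch})^{\sigma}\bigr)=\Gamma(X,p_*\Omega_{\tilde X}^{ch})^{\sigma}=\Omega_{\tilde X}^{ch}(\tilde X)^{\sigma},$$
which is the asserted equality. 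Concretely, a global section $s$ of $\Omega_{\tilde X}^{ch}$ descends to $X$ precisely when it is $\sigma$-invariant, since $\sigma$-invariance forces the two branches of $s$ over an evenly covered set to coincide and hence to glue to a well-defined section on the quotient. This recovers exactly the inclusion already noted in the preceding remark, now upgraded to an equality.

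The one point requiring care, and the main obstacle, is the first step: making precise that the chiral de Rham complex is functorial under local biholomorphisms and that the $\sigma$-action on $p_*\Omega_{\tilde X}^{ch}$ is the one whose invariants return $\Omega_X^{ch}$. I would verify this directly from the construction by checking that an evenly covered chart transports, via $p$, to a coordinate chart on $X$ under which the transition laws \eqref{chi.coo} are identical on each sheet, and that the deck transformation $\sigma$ interchanges the two sheets so that on stalks it swaps the two copies of the corresponding stalk of $\Omega_X^{ch}$. Once this compatibility of the gluing data with $\sigma$ is established, the remainder is the standard descent argument for a free finite quotient and is formal.
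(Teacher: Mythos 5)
Your descent argument is correct: since $p:\tilde X\to X$ is an unramified double cover with deck group $\{1,\sigma\}$ and $\Omega^{ch}$ is glued from purely local coordinate data preserved by biholomorphisms, one gets $(p_*\Omega_{\tilde X}^{ch})^{\sigma}\cong\Omega_X^{ch}$ as sheaves, and passing to global sections (which commutes with taking invariants of a finite group in characteristic zero, by left exactness of $\Gamma$) yields $\Omega_X^{ch}(X)=\Omega_{\tilde X}^{ch}(\tilde X)^{\sigma}$. The paper states this theorem without any proof --- the preceding sentence merely declares the identification obvious --- so your write-up supplies exactly the standard free-quotient descent the author leaves implicit, and there is no methodological divergence to report.
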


\end{document}